\newcommand\myshade{85}
\colorlet{mylinkcolor}{violet}
\colorlet{mycitecolor}{YellowOrange}
\colorlet{myurlcolor}{Aquamarine}
\pgfplotsset{my style/.append style={axis x line=middle, axis y line= middle, xlabel={$n$}, ylabel={$d$}, axis equal ,xlabel style={below right},
  ylabel style={above right} }}
\pgfplotsset{compat=1.16}
\declaretheorem[style = plain, numberwithin = section]{theorem}
\declaretheorem[style = plain,      sibling = theorem]{corollary}
\declaretheorem[style = plain,      sibling = theorem]{lemma}
\declaretheorem[style = plain,      sibling = theorem]{proposition}
\declaretheorem[style = definition, sibling = theorem]{definition}
\declaretheorem[style = remark, sibling = theorem]{example}
\declaretheorem[style = remark,     sibling = theorem]{remark}
\DeclareMathOperator{\Spec}{Spec}
\DeclareMathOperator{\Sb}{sb}
\DeclareMathOperator{\SB}{SB}
\DeclareMathOperator{\LL}{\mathbb{L}}
\DeclareMathOperator{\Vol}{Vol}
\DeclareMathOperator{\Supp}{Supp}
\DeclareMathOperator{\ord}{ord}
\DeclareMathOperator{\Var}{Var}
\DeclareMathOperator{\Cl}{Cl}
\DeclareMathOperator{\Pic}{Pic}
\DeclareMathOperator{\Proj}{Proj}
\DeclareMathOperator{\Hom}{Hom}
\DeclareMathOperator{\width}{width}
\DeclareMathOperator{\Conv}{Conv}
\DeclareMathOperator{\codim}{codim}
\DeclareMathOperator{\htt}{ht}
\DeclareMathOperator{\FI}{FI}
\DeclarePairedDelimiter{\ip}{\langle}{\rangle}
\newcommand{\floor}[1]{\left\lfloor #1 \right\rfloor}
\newcommand{\ceil}[1]{\left\lceil #1 \right\rceil}
\renewcommand{\P}{\mathbb{P}}
\newcommand{\N}{\mathbb{N}}    
\newcommand{\Z}{\mathbb{Z}}    
\newcommand{\Q}{\mathbb{Q}}    
\newcommand{\R}{\mathbb{R}}    
\newcommand{\C}{\mathbb{C}}    
\newcommand{\A}{\mathbb{A}}    
\newcommand{\X}{\mathscr{X}}
\newcommand{\Y}{\mathscr{Y}}
\newcommand{\OO}{\mathcal{O}}
\newcommand{\Lb}{\mathcal{L}}
\newcommand{\PP}{\mathscr{P}}
\newcommand{\M}{\mathcal{M}}
\newcommand{\I}{\mathcal{I}}
\newcommand{\J}{\mathcal{J}}
\newcommand{\set}[1] {\left\lbrace #1 \right\rbrace}
\newcommand{\defeq}{\vcentcolon=}
\newcommand{\sett}[2] {\set{#1 : #2 }}
\newcommand{\settb}[2] {\set{#1 ~|~ #2 }}
\newcommand{\Zo}{Z^\circ}
\newcommand{\sbb}[1]{[#1]_{\Sb}}
\newcommand{\Volsb}{\Vol_{\Sb}}
\newcommand{\Kvar}[1] {\mathbf{K}_0(\Var_{#1})}
\newcommand{\SBB}[1]{\Z[\SB_{#1}]}
\newcommand{\Span}[1]{\text{Span}(#1)}
\newcommand{\Kn}{k (\!( t^{1/n} )\!)}
\newcommand{\Rn}{k\llbracket t^{1/n} \rrbracket}
\newcommand{\KK}{k (\!( t )\!)}
\newcommand{\Kollar}{Koll\'{a}r~}
\newcommand{\locdiv}[2]{\underline{\Gamma}_{#1}(\operatorname{Div}^+_{#2})}
\newcounter{nmdthmcnt}
\newenvironment{introthm}[2][]{\addtocounter{nmdthmcnt}{1}%
    \newtheorem*{nmdthm\roman{nmdthmcnt}}{#2}%
    \begin{nmdthm\roman{nmdthmcnt}}{#1}}{\end{nmdthm\roman{nmdthmcnt}}}
\title{On Stable Rationality of Polytopes}
\author{Simen Westbye Moe}
\address{Imperial College, Department of Mathematics, South Kensington Campus, London
SW7 2AZ, UK}
\email{s.moe20@imperial.ac.uk}
\date{}
\begin{document}
\defcitealias{M2}{Macaulay2}
\begin{abstract}
    Nicaise--Ottem introduced the notion of (stably) rational polytopes and studied this using a combinatorial description of the motivic volume. In this framework, we ask whether being non-stably rational is preserved under inclusions. We prove this holds for a large class of polytopes, leading to a combinatorial strategy for studying stable rationality of hypersurfaces in toric varieties. As a result, we obtain new bounds for non-stably rational hypersurface in projective space, improving the ones given by Schreieder when the field has characteristic 0. We also obtain similar bounds for double covers of projective space and some new classes of non-stably rational varieties in products of projective space.
\end{abstract}
\maketitle
\setcounter{tocdepth}{1}
\section*{Introduction}
A complex variety $X$ is said to be \emph{stably rational} if there are integers $m,n\geq 0$ such that $X\times \P^n$ is birational to $\P^m$. Determining whether a variety is stably rational is a complex problem, but in the past 50 years, we have seen many new techniques applied with great success. Artin--Mumford showed that some special quartic double solids are non-stably rational using cohomological invariants \cite{AM72}. More than 40 years later, Voisin showed that very general quartic double solids are non-stably rational \cite{Voi15}. This was done by showing that having a decomposition of the diagonal is a stable birational invariant that specializes in mildly singular families. This degeneration technique has been generalized and applied in many other cases, for example, \cite{Tot16, Sch19a, CP16}.

Nicaise--Shinder introduced a different degeneration technique using a motivic obstruction to stable rationality \cite{NS19}. Let $K$ be the field of Puiseux series over an algebraically closed field $k$ of characteristic $0$. Then there is a map between rings of stable birational equivalence classes
\begin{align} 
\Volsb \colon \SBB{K} &\to \SBB{k} \nonumber \\ 
X&\mapsto \sum_{E\in S(\X)} (-1)^{\codim E}\sbb{E} \label{volsbIntroEq}
\end{align}
where $S(\X)$ is the set of strata in the special fiber of any strictly toroidal model for $X$ (see \cref{section1} for details). The map $\Volsb$ takes a stably rational (smooth and proper) $K$-scheme to the class $\sbb{\Spec{k}}$. From this, the strategy is as follows. We try to construct a degeneration of the varieties we are interested in such that $\Volsb$ takes the generic fiber to a class not equal to $\sbb{\Spec{k}}$, thus obstructing stable rationality of the generic fiber. Even if the special fiber contains a non-stably rational component, it must be ensured that it does not cancel with a term of opposite sign in \eqref{volsbIntroEq}. Avoiding this type of cancellation is the crucial property we study in this paper.

One way to construct degenerations is via toric geometry. From a Laurent polynomial $f\in k[x_1^{\pm 1},\dots, x_n^{\pm 1}]$ one can construct its Newton polytope $\Delta_f$. This is a convex lattice polytope and, in the associated projective toric variety, one obtains a canonical compactification $Z(f)$ of the zero set $\Zo(f)\subset \mathbb{G}_m^n$. Certain subdivisions of $\Delta_f$ induce degenerations of $Z(f)$, and a formula for \eqref{volsbIntroEq} in terms of a subdivision was calculated in \cite{NO20a} (see \cref{tropicalVolume}). In that formula, a stratum is of the form $\Zo(g)$ where $\Delta_g$ is a cell in the subdivision (but not in the boundary of $\Delta_f$). In this way, we try to show that $\Zo(f)$ is non-stably rational by finding $g$ such that $\Zo(g)$ is non-stably rational and $\Delta_g\subset \Delta_f$.

Nicaise--Ottem used the abovementioned strategy to show that a very general quartic fivefold is not stably rational \cite[Theorem 5.1]{NO20a}. The Newton polytope of a general quartic fivefold is $4\Delta_5$, the standard 5-simplex dilated by $4$. They identified a non-stably rational subpolytope of $4\Delta_5$ (corresponding to a double cover of $\P^4$ branched along a quartic threefold) and constructed a subdivision such that the obstruction in \eqref{volsbIntroEq} is nontrivial. We illustrate their argument in the case of a quartic surface.

Let $4\Delta_3$ be the dilated 3-simplex and consider the subdivision represented in \cref{fig:subdivofsimplexIntro}. It induces a degeneration $\X$ of a quartic surface into two isomorphic surfaces $S$ (corresponding to the blue and red part in \cref{fig:subdivofsimplexIntro}) that intersect in an elliptic curve $E$ (corresponding to the green part). This means that
\begin{equation*}
\Volsb{(\X_K)} = 2\sbb{S} - \sbb{E} \neq \sbb{\Spec{k}}.
\end{equation*}
Since the obstruction is nontrivial, a very general quartic surface is not stably rational.

\begin{figure}
    \centering
    \tdplotsetmaincoords{80}{415}
\begin{tikzpicture}[tdplot_main_coords]	
\draw[dashed] (0,0,0) -- (0,4,0);
\foreach \z in {0,...,4}{
    \tikzmath{\zz = 4-\z;}
    \foreach \y in {0,...,\zz}{
        \tikzmath{\yy = 4-\z-\y;}
        \foreach \x in {0,...,\yy}{
        \draw[fill=black] (\x,\y,\z) circle (1pt);
}}}
\fill [red, fill opacity=0.5] (0,0,0) -- (0,0,2)  -- (0,4,0) -- cycle;
\fill [red, fill opacity=0.5] (0,0,0) -- (0,4,0) -- (4,0,0) --cycle;
\fill [red, fill opacity=0.5] (0,0,0) -- (0,0,2) -- (4,0,0) --cycle;
\fill [blue, fill opacity=0.5] (0,0,2) -- (0,0,4) -- (4,0,0) --cycle;
\fill [blue, fill opacity=0.5] (0,0,4) -- (4,0,0) -- (0,4,0) --cycle;
\fill [blue, fill opacity=0.5] (0,0,2) -- (0,0,4) -- (0,4,0) --cycle;
\fill [green, fill opacity=0.5] (0,0,2) -- (0,4,0) -- (4,0,0) -- cycle;
\draw[dashed] (0,0,2) -- (0,4,0);
\draw (0,0,2) -- (4,0,0);
\draw (0,4,0) -- (4,0,0);
\draw (0,0,0) -- (4,0,0);
\draw (0,0,0) -- (0,0,4);
\draw (4,0,0) -- (0,0,4);
\draw (0,4,0) -- (0,0,4);
\draw[fill=black] (0,0,0) circle (1pt);
\draw[fill=black] (1,0,0) circle (1pt);
\draw[fill=black] (2,0,0) circle (1pt);
\draw[fill=black] (3,0,0) circle (1pt);
\draw[fill=black] (4,0,0) circle (1pt);
\draw[fill=black] (0,0,1) circle (1pt);
\draw[fill=black] (0,0,2) circle (1pt);
\draw[fill=black] (0,0,3) circle (1pt);
\draw[fill=black] (0,0,4) circle (1pt);
\draw[fill=black] (3,1,0) circle (1pt);
\draw[fill=black] (2,2,0) circle (1pt);
\draw[fill=black] (1,3,0) circle (1pt);
\draw[fill=black] (0,4,0) circle (1pt);
\draw[fill=black] (0,3,1) circle (1pt);
\draw[fill=black] (0,2,2) circle (1pt);
\draw[fill=black] (0,1,3) circle (1pt);
\draw[fill=black] (0,0,4) circle (1pt);
\draw[fill=black] (2,1,1) circle (1pt);
\draw[fill=black] (1,2,1) circle (1pt);
\draw[fill=black] (0,3,1) circle (1pt);
\draw[fill=black] (0,3,1) circle (1pt);
\draw[fill=black] (3,0,1) circle (1pt);
\draw[fill=black] (2,0,1) circle (1pt);
\draw[fill=black] (1,0,1) circle (1pt);
\draw[fill=black] (1,0,2) circle (1pt);
\draw[fill=black] (1,0,3) circle (1pt);
\draw[fill=black] (2,0,2) circle (1pt);
\draw[fill=black] (1,1,2) circle (1pt);
\end{tikzpicture}
    \caption{A subdivision of $4\Delta_3$}
    \label{fig:subdivofsimplexIntro}
\end{figure}
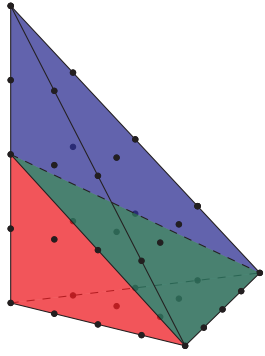

An essential aspect of this argument is controlling the stable birational types of all strata and the cancelation in \eqref{volsbIntroEq}. Above, this is achieved by constructing a degeneration where the two irreducible components are isomorphic. In general, we must ensure that two strata appearing with opposite signs in \eqref{volsbIntroEq} have distinct stable birational types if they are not stably rational. If this is possible, then any polytope that contains a non-stably rational polytope (not wholly contained in the boundary) is also not stably rational. The primary purpose of this paper is to study this property and its implications. We give unconditional results in dimensions at most four and partial results in higher dimensions. Our first result is the following.

\begin{introthm}{\cref{mainThmDim4}}
    Let $\delta\subset\Delta$ be lattice polytopes with $\dim\Delta \leq 4$, $\delta$ not stably rational, $\delta\not\subset \partial\Delta$. Then $\Delta$ is also not stably rational.
\end{introthm}

In an arbitrary dimension, we prove the following result. Condition $(M)$ (defined before \cref{monomialDegenThm}) is, in general, not satisfied but is typically easy to check. Even when it is not satisfied, we can often enlarge the polytope slightly so that it is.

\begin{introthm}{\cref{monomialDegenCor}}
    Let $\delta\subset \Delta$ be lattice polytopes with $\dim\delta=\dim\Delta$ such that $\delta$ is not stably rational and satisfies the condition $(M)$. Then $\Delta$ is also not stably rational.
\end{introthm}

We also prove a version that complements this, replacing condition $(M)$ with a smoothness condition; see \cref{smoothContainment}. We now have a combinatorial strategy for proving the non-stable rationality of polytopes by looking for non-stably rational subpolytopes satisfying any of the conditions mentioned above. Using this strategy, we give many new classes of non-stably rational varieties.

\Kollar showed that a very general hypersurface of degree $d\geq 2\ceil{(n+3)/3}$ is irrational \cite{Kol95}. This was done by constructing special examples admitting global differential forms after reduction modulo a prime $p$. Using these examples and the specialization technique initiated by Voisin, Totaro gave the first bound for stable rationality: A very general hypersurface of degree $d\geq 2\ceil{(n+2)/3}$ is not stably rational \cite{Tot16}. The state of the art is due to Schreieder. He proves that a very general hypersurface of degree $d\geq \log_2(n)+2$ is not stably rational \cite{Sch19a}. In fact, he proves that they are not even retract rational and gives results in positive characteristic. The proof is based on explicitly constructing varieties with no diagonal decomposition.

As a first application, we improve the bound given by Schreieder in characteristic 0. Note that our results are only valid in characteristic 0 and do not obstruct retract rationality.

\begin{introthm}{\cref{hypersurfacesTheorem}}
Let $N\geq 3$ and $X\subset \P^{N+1}$ be a very general hypersurface of degree $d\geq n+2$ where $N=n+r$ and $2^{n-1}-2 \leq r \leq 2^n-2+2^{n-2}(n-1)$. Then $X$ is non-stably rational.
\end{introthm}

The difference between our bound and the one in \cite{Sch19a} grows logarithmically with $n$ (see \cref{degreeRemark}). Schreieder gives examples of special hypersurfaces in all dimensions that are non-stably rational. These have small Newton polytopes compared to a general hypersurface of the same degree, and we use this together with \cref{monomialDegenCor} to obtain the new bound. For example, \cref{hypersurfacesTheorem} implies that quintics in $\P^{N+1}$ are non-stably rational if $N\leq 13$, and sextics are non-stably rational if $N\leq 30$. The cases where $N\leq 9$ and $N\leq 18$, respectively, are covered in \cite{Sch19a}.

Using the same methods, we give the following improvement on double covers of projective space.

\begin{introthm}{\cref{doubleCoverThm}}
Let $N\geq 3$ and write $N=n+r$ for $2^{n-1}-2\leq r\leq 2^{n}-2 + 2^{n-2}(n-1)-\floor{\frac{n}{2}}$. Then a double cover $X\to \P^N$ branched along a very general hypersurface of even degree $d\geq 2\ceil{\frac{n}{2}}+2$ is not stably rational.
\end{introthm}
This follows quite directly from \cref{hypersurfacesTheorem} viewed in terms of Newton polytopes. See the proof of \cref{doubleCoverThm}.

We also show the following classes of non-stably rational hypersurfaces in products of projective spaces.

\begin{introthm}{\cref{23Divisor}}
A very general hypersurface of bidegree $(2,3)$ in $\P^3\times \P^4$ or $(3,3)$ in $\P^4\times \P^4$ is not stably rational.
\end{introthm}

In \cref{section1}, we briefly describe the motivic volume and strictly toroidal models as in \cite{NS19, NO20a, NO20b}.

In \cref{section2}, we give an overview of some of the geometric invariants of $Z(f)$ that can be described in terms of $\Delta_f$. We recall the combinatorial formula from \cite{NO20a} that computes $\Volsb$ in terms of a subdivision. We also collect some examples of \emph{empty simplices} to illustrate the geometry of these from the point of view of rationality. These are polytopes where the framework of \cite{NO20a} cannot be applied and obstructs generalizing \cref{mainThmDim4} to any dimension.

In \cref{section3}, we study the variation of stable birational types in linear systems on toric varieties. We aim to use this to control the cancelation of terms in \eqref{volsbIntroEq}. We introduce certain classes of polytopes and prove that the stable birational type in the associated linear systems varies strongly. We prove this by constructing special degenerations not given by subdivisions of polytopes. Because of this, extra work is required to show they are strictly toroidal. This is established by proving a general result on strictly toroidal schemes in the language of logarithmic geometry, proved in \cref{appendixA}.

We prove \cref{mainThmDim4} in \cref{section4} by analyzing the types of surfaces that arise as ample divisors in toric threefolds. More specifically, we show that they cannot be birationally ruled over curves of positive genus.

\cref{section5} is devoted to applications. We apply the methods developed in \cref{section3} to hypersurfaces in projective space and double covers of projective space. We study the Newton polytopes associated with special hypersurfaces constructed in \cite{Sch19a}. We end by showing that very general hypersurfaces of bidegree $(2,3)$ and $(3,3)$ in $\P^3\times\P^4$ and $\P^4\times \P^4$ respectively are non-stably rational.

\subsection*{Notation}
We fix $k$ an algebraically closed field of characteristic 0, and let $F$ denote an arbitrary field of characteristic 0. By a variety over $F$, we mean a reduced separated $F$-scheme of finite type.

Let $K_0=k(\!(t)\!)$ denote the field of Laurent polynomials in $t$ and $R=k[\![t]\!]$ its discrete valuation ring. We let $K=\cup_{n\geq 1} \Kn$ be the field of Puiseux series, and we have a valuation

\begin{align*}
    \nu\colon K^\times \to \Q,~~ \sum_{k\in \Z} c_k t^{k/n} 
    \mapsto \min\settb{k/n}{c_k \neq 0}
\end{align*}
with valuation ring $R=\cup_{n\geq 1} \Rn$.

\subsection*{Acknowledgements}
I want to thank Johannes Nicaise and John Christian Ottem for their continuous support and guidance throughout this project. This research is funded by Aker Scholarship.

\section{Rationality and the motivic volume} \label{section1}
\subsection{Rationality of Varieties}
There are many measures of the birational complexity of a variety. We will focus mainly on \emph{ stable rationality}.
\begin{definition}
    Let $X$ be a variety over $F$ of dimension $n$. Then $X$ is said to be
    \begin{enumerate}
        \item[(1)] \emph{Rational} if it is birational to $\P^n_F$.
        \item[(2)] \emph{Stably rational} if there is an integer $\ell\geq 0$ such that $X\times \P^\ell_F$ is birational to $\P^{\ell+n}_F$.
        \item[(3)] Two varieties $X,Y$ are \emph{stably birational} if there are integers $r,s\geq 0$ such that $X\times_F \P^r_F$ is birational to $Y\times_F \P^{s}_F$.
        \end{enumerate}
        \end{definition}

Given an arbitrary algebraic variety, it is not easy to establish what kind of rationality property it has. An approach successfully applied in recent years is constructing special varieties $X$ with an obstruction to (stable) rationality and using specialization to spread the result in smooth proper families containing $X$. This is made precise by the following result.

\begin{theorem}[{\cite[Theorem 4.1.4]{NS19}}] \label{veryGeneralCor}
    Let $S$ be a Noetherian $\Q$-scheme, and let $f\colon X\to S$, $g\colon Y\to S$ be proper smooth morphisms. Then the set
    \begin{equation*}
    \settb{s\in S}{X\times_S \overline{s}~\text{is stably birational to}~Y\times_S \overline{s} }
    \end{equation*}
    where $\overline{s}$ is any geometric point based at $s$, is a countable union of closed sets.
\end{theorem}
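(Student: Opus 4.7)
The plan is to realize the stable-birational locus as a countable union of images of relative Hilbert schemes, and then use the properness of these Hilbert schemes to promote the resulting constructible sets to closed ones.

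First, by Noetherian approximation, I would reduce to the case where $S$ is of finite type over $\mathbb{Q}$, so that relative Hilbert schemes behave well. Then, for each triple $(r,s,P)$ consisting of non-negative integers $r,s$ and a numerical polynomial $P$, consider the relative Hilbert scheme
\[
H_{P,r,s} \;:=\; \mathrm{Hilb}^P_S\bigl((X \times_S \P^r_S) \times_S (Y \times_S \P^s_S)\bigr),
\]
which is projective over $S$, and note that the index set is countable. Inside $H_{P,r,s}$ let $V_{P,r,s}$ be the locally closed subscheme parametrizing integral closed subschemes whose two projections are birational onto the respective factors, and put $T_{P,r,s} := \pi_S\bigl(\overline{V_{P,r,s}}\bigr)$. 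By properness of $H_{P,r,s} \to S$, each $T_{P,r,s}$ is a closed subset of $S$, so it would suffice to show that $\bigcup_{r,s,P} T_{P,r,s}$ coincides with the stable-birational locus.

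The inclusion $\supseteq$ is the easier direction and is where the characteristic-zero hypothesis is used, via Hironaka's resolution of indeterminacies. At a geometric point $\bar{s}$ where $X_{\bar{s}}$ and $Y_{\bar{s}}$ are stably birational, any witnessing birational map $X_{\bar{s}} \times \P^r \dashrightarrow Y_{\bar{s}} \times \P^s$ can be resolved by a smooth projective variety $\widetilde{Z}$ dominating both sides birationally; its image in the ambient product lies in some $V_{P,r,s}$, so $\bar{s} \in T_{P,r,s}$.

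The reverse inclusion $\subseteq$ is the main obstacle, and it is what forces genuine use of the smoothness of $X \to S$ and $Y \to S$, rather than just their properness. Given $s_0 \in T_{P,r,s}$, properness of $H_{P,r,s} \to S$ furnishes a DVR $\mathcal{O}$ together with an $\mathcal{O}$-family of subschemes whose generic fibre lies in $V_{P,r,s}$ and whose special fibre lies over $s_0$; the delicate point is to extract from this limiting subscheme a genuine stable-birational equivalence between $X_{s_0}$ and $Y_{s_0}$, rather than a merely degenerate correspondence whose projections acquire positive-dimensional fibres. I expect this step to go through by exploiting smoothness of the ambient families to argue that, after possibly replacing $(r,s)$ by a larger pair (absorbed by the countable union) and refining the family, the limit still witnesses stable birationality. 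Once this final step is in place, the countable union $\bigcup_{r,s,P} T_{P,r,s}$ displays the stable-birational locus as a countable union of closed subsets of $S$, as required.
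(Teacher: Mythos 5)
Your overall strategy --- parametrizing candidate stable-birational correspondences by relative Hilbert schemes, using their properness over $S$, and exhibiting the locus as a countable union of images of closed subsets --- matches the skeleton of the argument in \cite{NS19}, and the forward inclusion (stably birational locus $\subseteq \bigcup T_{P,r,s}$) is fine. But the reverse inclusion that you flag as ``the main obstacle'' is not a technical wrinkle that smoothness of $X/S$ and $Y/S$ will smooth over. Unwinding your definition, $s_0\in T_{P,r,s}=\pi_S\bigl(\overline{V_{P,r,s}}\bigr)$ means precisely that $s_0$ admits a generization $s'\in S$ with $X_{\bar{s}'}$ stably birational to $Y_{\bar{s}'}$, and you need to conclude that $X_{\bar{s}_0}$ is stably birational to $Y_{\bar{s}_0}$. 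In other words, the reverse inclusion \emph{is} the specialization theorem for stable birational equivalence of smooth proper families over a DVR of residue characteristic zero.

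That specialization statement is the central theorem of \cite{NS19}, not a by-product of the Hilbert-scheme bookkeeping. Their proof constructs the motivic volume $\Volsb\colon\SBB{K}\to\SBB{k}$ (recalled as \cref{motivicVolumeThm} here), shows it is a well-defined ring homomorphism on stable birational classes, and deduces specialization because a smooth proper $R$-model computes $\Volsb$ on the nose; Kontsevich--Tschinkel obtain it independently via the Burnside group together with weak factorization and resolution of singularities. Both routes use input well beyond ``exploiting smoothness'' and ``increasing $(r,s)$.'' In particular, enlarging $(r,s)$ cannot by itself repair a degenerate limit cycle $Z_0$: if both projections of $Z_0$ acquire $d$-dimensional generic fibres (which a dimension count forces to happen symmetrically), replacing $\P^r,\P^s$ by larger projective factors merely reproduces the same degeneracy at a higher level. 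So there is a genuine gap at the reverse inclusion, and it is a gap of exactly the depth of the theorem's actual proof rather than a loose end to be tidied up.
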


For a smooth proper morphism $f\colon X\to S$, we say that a property holds for a general fiber if it holds for fibers over a (nonempty) Zariski open set on $S$. Similarly, a property holds for a very general fiber if it holds for fibers over a (nonempty) countable intersection of Zariski open sets on $S$. \cref{veryGeneralCor} implies that if $f$ has a non-stably rational fiber, then a very general fiber is also not stably rational.

Another implication of \cref{veryGeneralCor} is that the set of fibers of $f\colon X\to S$ that is stably birational to a fixed variety $Y$ is a countable union of Zariski closed sets on $S$. This means either the stable birational type of the fibers of $f$ is constant, or two very general fibers are not stably birational. In the latter case, we say that $f$ has \emph{variation of stable birational type} \cite{SV19, Sch19b}.

\begin{definition}
    Let $S$ be a Noetherian $\Q$-scheme, and $\pi\colon U\to S$ be a smooth proper morphism. We say that $\pi$ (or simply $S$ if $\pi$ is clear from context) has \emph{variation of stable birational type} (\emph{variation} for short) if for any variety $X$ the set
    \begin{align*}
        \operatorname{R}(X,\pi) = \settb{s\in S}{U\times_S\overline{s}~\text{is stably birational to}~X}
    \end{align*}
    where $\overline{s}$ is a geometric point over $s$, is a countable union of proper closed sets of $S$. 
\end{definition}

\subsection{Strictly Toroidal Models}
The stable birational invariant introduced in this section can be computed on strictly toroidal models. To describe these, we first give a few preliminaries on monoids (see \cite[Section 1.1.3]{Ogu18} for details). 

Let $M$ be a monoid (written additively) with identity $0$ and denote by $M^{\operatorname{gp}}$ the groupification of $M$. We say that $M$ is
\begin{itemize}
    \item \emph{integral} if for all $a,b,c\in M$ we have $a+c=b+c\iff a=b$,
    \item \emph{saturated} if for $a\in M^{\operatorname{gp}}$ and $n\in \Z_{>0}$ we have $na\in M \iff a\in M$,
    \item \emph{sharp} if the only invertible element in $M$ is $0$,
    \item \emph{fine} if it is integral and finitely generated.
\end{itemize}
\noindent
A monoid is \emph{toric} if fine, saturated, and sharp. These are the monoids of lattice points of finitely generated rational polyhedral cones.

For a monoid $M$, let $F[M]$ denote the monoid algebra, the free abelian group on elements $\chi^m, m\in M$ with multiplication $\chi^m\cdot \chi^{m'}=\chi^{m+m'}$.

\begin{definition}
    A flat separated $R$-scheme $\X$ of finite presentation is \emph{strictly toroidal} if there is a cover of open sets $\mathscr{U}\subset \X$ with a smooth morphism
    \begin{align*}
        \mathscr{U}\to \Spec{R[M]/(\chi^m-t^q)}
    \end{align*}
    such that $M$ is a toric monoid, $k[M]/(\chi^m)$ is reduced, and $q\in \Q_{> 0}$. If $X$ is a $K$-scheme, then a strictly toroidal model for $X$ is a strictly toroidal $R$-scheme $\X$ with an isomorphism $\X_K\simeq X$.
\end{definition}

\begin{example}
Let $\X$ be a flat proper $R$-scheme with $\X_K$ smooth over $K$. Then $\X$ is \emph{strictly semi-stable} if it can be covered by open sets $\mathscr{U}\subset \X$ with a smooth morphism
\begin{equation*}
\mathscr{U}\to R[x_1,\dots, x_n]/(x_1\dots x_r-t^q)
\end{equation*}
for some $r\leq n$. This is strictly toroidal by taking $M=\N^n$ and $m=e_1+\dots +e_r$ where $e_i$ denotes the standard basis on $\N^n$.
\end{example}

For a strictly toroidal scheme $\X$, write $S(\X)$ for the set of strata of $\X_k$, meaning connected components of an intersection of irreducible components in $\X_k$.

\subsection{The Motivic Volume}
The motivic volume as a stable birational invariant was first introduced in \cite{NS19}. It associates to any smooth proper $K$-scheme of finite type a formal sum of stable birational types of $k$-schemes and to any stably rational $K$-scheme the class of $\Spec{k}$. The arithmetic takes place in the ring of stable birational types.

Let $\SBB{F}$ denote the free abelian group generated by stable birational equivalence classes $\sbb{X}$ for irreducible $F$-varieties $X$. For a (possibly reducible) variety $X$ and a decomposition $X=X_1\cup\dots\cup X_r$ into irreducible components, we let $\sbb{X}=\sbb{X_1}+\dots + \sbb{X_r}$. We endow $\SBB{F}$ with a ring structure given by $\sbb{X}\cdot \sbb{Y}=\sbb{X\times_F Y}$.

\begin{theorem}[{\cite[Theorem 3.3.2]{NO20b}}] \label{motivicVolumeThm}
There is a unique ring map
\begin{equation*}
    \Volsb \colon \SBB{K} \to \SBB{k}
\end{equation*}
such that for a strictly toroidal $R$-scheme $\X$ with generic fiber $\X_K$ we have
\begin{equation*}
    \Volsb(\sbb{\X_K}) = \sum_{E\in S(\X)} (-1)^{\codim(E)} \sbb{E}
\end{equation*}
where the codimension of $E$ is taken in the special fiber $\X_k$. The map $\Volsb$ is called the \emph{stable birational volume}. We typically write $\Volsb(X)$ for $\Volsb(\sbb{X})$.
\end{theorem}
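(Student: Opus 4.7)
My plan is to establish existence and uniqueness separately. Uniqueness is immediate: since $K$ is the field of Puiseux series over the algebraically closed characteristic-zero field $k$, semi-stable reduction guarantees that every smooth proper $K$-scheme $X$ admits a strictly semi-stable---hence strictly toroidal---$R$-model $\X$. The classes $\sbb{X}$ for such $X$ generate $\SBB{K}$ as an abelian group, so any ring homomorphism satisfying the displayed formula on strictly toroidal models is completely determined.

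For existence, the crucial task is to verify that the sum $\sum_{E \in S(\X)} (-1)^{\codim(E)} \sbb{E}$ depends only on the generic fiber $\X_K$ and not on the choice of strictly toroidal model. I would approach this via a weak-factorization-type result for strictly toroidal $R$-schemes: any two such models of the same generic fiber should be connected by a chain of toroidal blow-ups and blow-downs along smooth strata-compatible centers. Invariance under a single such blow-up of a stratum $E$ of codimension $c$ reduces to a local toric computation. The exceptional divisor is a (possibly weighted) projective bundle over $E$, whose stable birational class equals $\sbb{E}$; the new signed contributions, when summed together with the contributions from the transforms of the remaining strata, telescope against the old term $(-1)^{c}\sbb{E}$. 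Well-definedness on the full group $\SBB{K}$ then follows by checking the analogous statement when $X$ and $X'$ are stably birational smooth proper $K$-schemes, using a common strictly toroidal modification that dominates strictly toroidal models of both.

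For the ring structure, given strictly toroidal models $\X$ and $\Y$ of smooth proper $X,Y$, the fiber product $\X \times_R \Y$ (after a mild toroidal modification if necessary) is again strictly toroidal, with $S(\X \times_R \Y)$ in bijection with $S(\X) \times S(\Y)$ via $(E,F) \mapsto E \times_k F$. Codimensions add and signs multiply, which yields $\Volsb(\sbb{X}\cdot\sbb{Y}) = \Volsb(\sbb{X}) \cdot \Volsb(\sbb{Y})$ termwise. I expect the main obstacle to be the toroidal weak-factorization step: the classical Abramovich--Karu--Matsuki--W{\l}odarczyk theorem is stated for smooth varieties, and one needs a log-geometric analogue that preserves the strictly toroidal structure throughout. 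Controlling the stable birational type of the exceptional strata in this generality---where they may be weighted projective bundles or more general toric fibrations rather than honest projective bundles---also requires genuine input, as one must ensure that such toric fibrations over $E$ remain stably birational to $E$.
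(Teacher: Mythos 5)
This theorem is not proved in the paper you are reading; it is imported from Nicaise--Ottem \cite{NO20b}, which in turn builds on Nicaise--Shinder \cite{NS19}. Your route differs structurally from theirs. The cited construction works first at the level of Grothendieck rings: one builds a ring homomorphism $\Vol\colon \Kvar{K}\to\Kvar{k}$ (the genuine motivic volume, constructed via the motivic nearby fiber and motivic integration machinery, resp.\ Bittner's presentation plus a factorization result for $R$-models), shows it satisfies the signed-sum formula on strictly semi-stable models and sends $\LL$ to $\LL$, and then descends to $\Volsb\colon\SBB{K}\to\SBB{k}$ via the Larsen--Lunts isomorphism $\Kvar{F}/(\LL)\cong\SBB{F}$. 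The formula on strictly \emph{toroidal} models is deduced afterwards by comparing a given toroidal model to a strictly semi-stable modification and checking invariance of the signed sum in that one direction. Your proposal bypasses the Grothendieck ring entirely and tries to define $\Volsb$ directly on $\SBB{K}$, in the spirit of Kontsevich--Tschinkel's specialization of birational types.

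That alternative is plausible, but the gaps you flag are real, and there is one you miss. Weak factorization for strictly toroidal $R$-schemes is not a formal consequence of the Abramovich--Karu--Matsuki--W{\l}odarczyk theorem: you need the log-smooth relative version (Abramovich--Temkin), together with an argument that all intermediate centers can be chosen so each stage remains a strictly toroidal $R$-model of the same generic fiber. The telescoping argument is also not a formality: blowing up a singular stratum of a toroidal model produces exceptional strata that are toric fibrations, not honest projective bundles, over the center, and one must actually prove these are stably birational to the center. The gap you gloss over is compatibility with stable birational equivalence. You cannot verify it by ``a common strictly toroidal modification dominating models of both,'' because stably birational $X$ and $X'$ generally have different dimensions and so have no common model at all; what one needs is the multiplicativity of $\Volsb$ together with $\Volsb(\sbb{\P^n_K})=\sbb{\Spec k}$. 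Hence the fiber-product step is not a pleasant afterthought but must be proved \emph{before} you can even conclude $\Volsb$ is well-defined on $\SBB{K}$. The Grothendieck-ring route avoids all of this by front-loading the hard analysis into the construction of $\Vol$ and outsourcing the descent to Larsen--Lunts.
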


\begin{corollary}
    Let $X$ be a smooth proper $R$-scheme. If $\Volsb(X)\neq \sbb{\Spec{k}}$ then $X$ is not stably rational over $K$.
\end{corollary}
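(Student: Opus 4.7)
I would prove this by contraposition, reducing it to the behavior of $\Volsb$ on the distinguished class $\sbb{\Spec K}$. Suppose $X$ (more precisely, its generic fiber $\X_K$) is stably rational over $K$. Then by definition there exists $\ell\geq 0$ with $\X_K\times_K \P^\ell_K$ birational to $\P^{\ell+n}_K$, which is a stably birational equivalence $\X_K \sim_{\Sb} \Spec K$. Hence in $\SBB{K}$ we have the equality $\sbb{\X_K}=\sbb{\Spec K}$.

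Next I would apply the ring homomorphism $\Volsb\colon \SBB{K}\to \SBB{k}$ from \cref{motivicVolumeThm}, which is well defined and in particular respects the class equality above. This reduces the claim to the computation $\Volsb(\sbb{\Spec K})=\sbb{\Spec k}$. To establish this, I would exhibit an explicit strictly toroidal model of $\Spec K$, namely $\Spec R$ itself. Concretely, taking $M=\N$ with generator $e$, $m=e$, and $q=1$, one has $R[M]/(\chi^e-t)\simeq R$, while $k[M]/(\chi^e)\simeq k$ is reduced, so $\Spec R$ satisfies the local model condition in the definition of strictly toroidal. Its special fiber $\Spec k$ consists of a single stratum of codimension $0$, so the formula in \cref{motivicVolumeThm} gives
\begin{equation*}
\Volsb(\sbb{\Spec K}) = (-1)^0 \sbb{\Spec k} = \sbb{\Spec k}.
\end{equation*}

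Combining the two steps, if $\X_K$ is stably rational we obtain $\Volsb(X)=\Volsb(\sbb{\X_K})=\Volsb(\sbb{\Spec K})=\sbb{\Spec k}$, contradicting the hypothesis $\Volsb(X)\neq \sbb{\Spec k}$. Therefore $\X_K$ is not stably rational over $K$. There is no real obstacle here, since the heavy lifting—uniqueness and the explicit formula for $\Volsb$ on strictly toroidal models—has already been done in \cref{motivicVolumeThm}; the only point that needs care is the verification that $\Spec R$ fits into the strictly toroidal framework, which is immediate from the choice of monoid above.
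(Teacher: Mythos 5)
Your proposal is correct and follows the same route as the paper's own proof: both observe that stable rationality gives $\sbb{\X_K}=\sbb{\Spec K}$ in $\SBB{K}$, then apply the ring map $\Volsb$ and use that $\Spec R$ is a strictly toroidal model of $\Spec K$ to get $\Volsb(\sbb{\Spec K})=\sbb{\Spec k}$. The only difference is that you spell out the monoid data $(M=\N,\ m=e,\ q=1)$ verifying strict toroidality of $\Spec R$, which the paper simply asserts.
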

\begin{proof}
    $\Spec{R}$ is strictly toroidal, so $\Volsb{(\Spec{K})}=\sbb{\Spec{k}}$. In particular, if $X$ is a stably rational smooth and proper $K$-scheme, then $\Volsb(X)=\sbb{\Spec{k}}$.
\end{proof}

\begin{example}
    Consider a family 
    \begin{equation*}
    \pi\colon X\to \A^1=\Spec{k[t]}
    \end{equation*}
    such that $\X=X\times_{\A^1} \Spec{R}$ is strictly toroidal. If $\Volsb(\X_K)\neq \sbb{\Spec{k}}$ then the very general fiber $\pi_t$ over a closed point $t\in \A^1$ is not stably rational by \cref{veryGeneralCor}.
\end{example}

\begin{example} \label{variationExample}
Let $f\colon X\to S$ be a proper smooth morphism and assume that some geometric fiber is not stably rational. To show that $f$ has variation of stable birational types, it suffices by \cref{veryGeneralCor} to find two fibers that are not stably birational. Since $\Volsb$ can be computed on strictly toroidal degenerations, we have even more flexibility.

Suppose $\X \to \Spec{R}$ is strictly toroidal such that a geometric generic fiber is isomorphic to a geometric fiber of $f$. If the fibers of $f$ have constant stable birational type, then $\Volsb(\X_K)$ is the stable birational class of the fibers of $f$ by \cref{veryGeneralCor}. On the contrary, if $\Volsb(\X_K)=\sbb{\Spec{k}}$, then the stable birational type of the fibers of $f$ vary. Specifically, this means that any set of stably birational fibers is a countable union of Zariski closed sets of $S$, so $\pi$ has a variation of stable birational type.
\end{example}

\section{Stable rationality of polytopes} \label{section2}
To a Laurent polynomial $f\in k[x_1^{\pm 1},\dots, x_n^{\pm 1}]$ one can associate a convex lattice polytope $\Delta_f$, its Newton polytope. It provides a natural compactification $Z(f)$ of the zero set $\Zo(f)\subset\mathbb{G}_m^n$ in the projective toric variety induced by the polytope, and certain subdivisions of $\Delta_f$ induce degenerations of $Z(f)$. Nicaise--Ottem computes the motivic volume of these degenerations in terms of the subdivision of $\Delta_f$ \cite{NO20a}. This allows them to consider geometrically complex degenerations, and they use this to settle the question of stable rationality for many previously unknown classes of varieties. In this section, we concentrate on the combinatorial setup leading to the formula for the motivic volume. We also give an overview of the relevant invariants of $Z(f)$ that can be read from $\Delta_f$.

Some polytopes admit no nontrivial subdivisions, so the framework in \cite{NO20a} cannot be applied. At the end of this section, we collect some examples of these polytopes' possible geometric features from the viewpoint of rationality questions.

\subsection{Projective toric varieties}

Let $M\simeq \Z^n$ be a free abelian group of rank $n$ with the corresponding vector space $M_{\R} = M\otimes_\Z \R \simeq \R^n$. Let $N=\Hom_\Z(M,\Z)$ and $N_\R=N\otimes_\Z \R$ denote the dual lattice and the dual vector space with the natural pairing $\langle\cdot,\cdot\rangle \colon M_\R\times N_\R \to \R$. For any full dimensional lattice polytope $\Delta\subset M_\R$, an associated projective toric variety is constructed as follows.

Let $\sigma\subset M_\R\times \R_{\geq 0}$ be the cone defined by
\begin{equation*}
  \sigma = \settb{(\lambda v,\lambda)\in M_\R\times \R_{\geq 0}}{v\in \Delta, \lambda\in \R_{\geq 0}}.
\end{equation*}
Let $F[(M\times\Z) \cap \sigma]$ be the monoid algebra and view this as a graded algebra according to $\deg \chi^{(m,r)}=r$. The projective toric variety corresponding to $\Delta$ is defined as $\P_F(\Delta)=\Proj{F[(M\times \Z)\cap \sigma]}$. Torus-invariant sections of the line bundle $\Lb(\Delta)\defeq \OO_{\P_F(\Delta)}(1)$ are in bijective correspondence with lattice points $\Delta\cap M$. 

Let $\Sigma^P$ denote the \emph{inward normal fan} of $P$. It is the collection of cones $\sigma^Q$ in $N_\R$ for every face $Q$ of $\Delta$ defined by
 \begin{equation*}
   \sigma^Q = \sett{n\in N_\R}{\ord_\Delta(n)=\langle m,n \rangle ~\text{for all}~m\in Q}
 \end{equation*}
 where
 \begin{equation*}
   \ord_\Delta(n) \defeq \min_{m\in \Delta} \langle m,n\rangle.
 \end{equation*}
 Let $S_Q = N\cap \sigma^Q$ denote the monoid of lattice points and let $\Sigma^\Delta(r)$ denote the cones of dimension $r$.

For any ray $\rho\in \Sigma^\Delta(1)$ we let $D_\rho$ denote the torus invariant divisor corresponding to $\rho$, and $u_\rho$ the primitive generator of $\rho$.  To a Weil divisor $D=\sum_{\rho\in \Sigma^\Delta(1)} a_\rho D_\rho$ there is a polyhedron $P_D$ defined as
\begin{equation*}
P_D = \settb{m\in M_\R}{\langle m,u_\rho \rangle + a_\rho \geq 0,~ \rho\in\Sigma^\Delta(1)}.
\end{equation*}
The global torus-invariant sections of $\OO_{\P(\Delta)}(D)$ are in bijective correspondence with $P_D\cap M$ and span the vector space $H^0(\P(\Delta),\OO_{\P(\Delta)}(D))$.

Consider a Laurent polynomial $f = \sum_{m\in M} c_m\chi^m \in F[M]$ and denote by $\Supp(f)$ the set of $m\in M$ such that $c_m\neq 0$, called the \emph{support} of $f$. Let $\Delta_f$ denote the convex hull of $\Supp(f)$, the \emph{Newton polytope} of $f$.
\begin{equation*}
  \Delta_f = \Conv\settb{m\in M}{c_m\neq 0} \subset M_\R
\end{equation*}
We denote by $\Zo(f)$ the zero-locus of $f$ in the dense torus $\mathbb{G}_{m,F}^n=\Spec{F[M]}$, and by $Z(f)$ the effective Cartier divisor on $\P(\Delta)$ induced by the section of $\OO_{\P_F(\Delta)}(1)$ corresponding to $f$. 

If $\Delta$ is not full dimensional, let $m\in M\cap \Delta$ be a lattice point and consider the translation $\Delta-m= \settb{v-m\in M}{v\in \Delta}$. This is full dimensional in its linear span, and we can carry out the above construction. Then $Z(f)$ is independent of $m$: since the linear span of any such translation is the same, two different translations differ by a unimodular affine transformation in this span.

Let $f=\sum_{m\in M} c_m\chi^m$ be a Laurent polynomial with Newton polytope $\Delta$ and let $\delta\subset \Delta$ be a (lattice) subpolytope. Denote by $f_\delta=\sum_{m\in M\cap \delta} c_m\chi^m$ the restriction of $f$ to $\delta$. Then the Newton polytope of $f_\delta$ is contained in $\delta$ with equality if and only if $c_m\neq 0$ for $m$ a vertex of $\delta$. Let $\Zo(f_\delta)$ and $Z(f_\delta)$ denote the hypersurfaces obtained by the construction above.

\begin{definition} \label{newtonNonDegenDef}
  A Laurent polynomial $f$ with Newton polytope $\Delta$ is \emph{Newton nondegenerate} if $\Zo(f_\delta)$ is smooth (over $F$) for every face $\delta\subset \Delta$.
\end{definition}

This condition ensures that the singularities of $Z(f)$ can be understood combinatorially; for example, a toric resolution of singularities of $\P(\Delta)$ also produces a resolution for $Z(f)$ \cite[Prop. 3.3c]{NO20a}.
\begin{remark}
    If $f$ has Newton polytope $\Delta$, then the schematic closure of $\Zo(f)$ in $\P(\Delta)$ is $Z(f)$ \cite[Prop. 3.3a]{NO20a}. In particular, the two are birational. 
\end{remark} 
Fix a finite subset $S\subset M$ and consider the $F$-vector space $V=\Hom(S,F)$.  We say that a property holds for a general (resp. very general) Laurent polynomial with support $S$ if it holds for a non-empty Zariski open set (resp. countable intersection of Zariski open sets) in $V$. If $S=\Delta\cap M$ for a lattice polytope $\Delta$, we say that a property holds for (very) general Laurent polynomials with Newton polytope $\Delta$. By Bertini's theorem, a general Laurent polynomial with a fixed Newton polytope is Newton nondegenerate when the field has characteristic $0$.

\begin{definition} \label{rationalPolytopeDef}
  We say that a lattice polytope $\Delta$ is \emph{(stably) rational} if, for a general Laurent polynomial $f$ with Newton polytope $\Delta$, the variety $\Zo(f)$ is (stably) rational. More generally, we say that two lattice polytopes $\Delta$ and $\Delta^\prime$ are \emph{(stably) birational} if for a very general pair, $(f,g)$ of Laurent polynomials with Newton polytopes $\Delta$ and $\Delta^\prime$, respectively, $\Zo(f)$ and $\Zo(g)$ are (stably) birational.
\end{definition}

It follows from \cref{veryGeneralCor} and \cite[Prop. 3.3]{NO20a} that if there is a single pair $(f,g)$ of Newton nondegenerate Laurent polynomials with Newton polytopes $\Delta$ and $\Delta^\prime$, respectively, such that $\Zo(f)$ and $\Zo(g)$ are not stably birational. Then $\Delta$ and $\Delta^\prime$ are not stably birational. In particular, if $f$ is Newton nondegenerate and $\Zo(f)$ is not stably rational, then $\Delta_f$ is not stably rational.

A very coarse measure of the rationality of polytopes is the \emph{lattice width}. Let $\Delta$ be a (full-dimensional) lattice polytope, and let $0\neq l\in M^\vee$. The positive integer $\width(\Delta,l)=\max_{x\in \Delta}l(x) - \min_{y\in\Delta}l(y)$ is the \emph{width of $\Delta$ with respect to $l$}. The width of $\Delta$ is defined to be the minimum

\begin{equation*}
  \width(\Delta)= \min_{0\neq l\in M^\vee }\width(\Delta,l)\in \Z_{>0}.
\end{equation*}

\begin{example}[Rationality and width]
Let $\Delta\subset M_\R$ be a full dimensional lattice polytope.
If $\Delta$ has lattice width $1$, then we may assume, after a unimodular coordinate change, that $\Delta$ lies between the hyperplanes $e_1^\vee=0$ and $e_1^\vee=1$ (where $e_i$ denotes a basis with dual basis $e_i^\vee$). This means that a general Laurent polynomial with Newton polytope $\Delta$ is linear in a variable, hence rational. Already for width two polytopes, there are rational and non-stably rational examples (see \cref{TotaroKollarPolytopes}).
\end{example}

\begin{example}[Polytopes with interior lattice points]
    Let $\Delta$ be a lattice polytope and $f$ a Newton nondegenerate Laurent polynomial with Newton polytope $\Delta$. Let $X$ denote a smooth projective compactification of $\Zo(f)$. Suppose that the interior of $\Delta$ contains a lattice point. Then $X$ has nonnegative Kodaira dimension; in particular, it is not stably rational. This is a specific case of a more general construction called the \emph{Fine interior} \cite{Bat17, Rei87, Fin83}.
\end{example}

\subsection{The Fine Interior}
Let $f$ be a general Laurent polynomial with Newton polytope $\Delta$. When $\Delta$ is smooth, the asymptotic behavior of the plurigenera $P_r$ of $Z(f)$ is governed by the asymptotic behavior of the polytope $P_{K_{\P(\Delta)}+D_\Delta}$. In general, this is no longer true. However, one can still define a polytope $\Delta^{\FI}$ called the \emph{Fine interior} of $\Delta$ that captures the plurigenera of a smooth projective compactification of $\Zo(f)$. It considers the combinatorics behind a toric resolution of singularities to capture canonical sections of the resolution of $Z(f)$. J. Fine introduced it in \cite{Fin83}. For other references, see \cite{Rei87, Bat17, Bat20} for other references.

Consider the halfspaces
 \begin{align*}
   \Gamma_0^\Delta(n)&=\sett{m\in M_\R}{\ip{m,n}\geq \ord_\Delta(n)} \\
     \Gamma_1^\Delta(n)&=\sett{m\in M_\R}{\ip{m,n}\geq \ord_\Delta(n)+1}.
 \end{align*}
 The halfspaces $\Gamma_0^\Delta$ are the supporting halfspaces of $\Delta$. The \emph{Fine interior} of $\Delta$ is defined as
 \begin{equation*}
   \Delta^{\FI} = \bigcap_{0\neq n\in N} \Gamma_1^\Delta(n).
 \end{equation*}
 It suffices to take the above intersection over minimal generators of the monoids $S_Q$ for faces $Q\subset \Delta$. Equivalently, take a generating set for each cone and take the intersection of the corresponding halfspaces. Indeed if $n_1,n_2\in S_Q$ satisfying $\ip{n_i,m}\geq \ord_\Delta(n_i)+1$ then
\begin{align*}
\ip{n_1+n_2,m}=\ip{n_1,m}+\ip{n_2,m}&\geq \ord_\Delta(n_2) + \ord_\Delta(n_2)+2 \\
&\geq \ord_\Delta(n_1+n_2)+1.
\end{align*}
 This shows that $\Delta^{\FI}$ is a polytope. In general, it is not a lattice polytope; see \cref{fineInt3DimEx}.
 \begin{remark} \label{fineint_remarks}
    When $\P(\Delta)$ is smooth, we have equality $\Delta^{\FI}=P_{K_{\P(\Delta)}+D_\Delta}$. In general, $\Delta^{\FI}$ considers the supporting halfspaces one must add in a toric resolution of singularities. 
 \end{remark}
The following proposition gives a geometric interpretation of $\Delta^{\FI}$.
\begin{proposition} \label{kodaira_fine_interior}
Let $f$ be a Newton nondegenerate Laurent polynomial with Newton polytope $\Delta$, and let $X$ be a smooth projective compactification of $\Zo(f)$. Then
\begin{equation*}
  \kappa(X)=\begin{cases}
    -\infty &  \Delta^{\FI}=\emptyset \\
     \dim{\Delta^{\FI}} & 0\leq \dim{\Delta^{\FI}}<\dim\Delta \\
     \dim{\Delta^{\FI}}-1 & \dim{\Delta^{\FI}}=\dim\Delta
    \end{cases}
\end{equation*}
Moreover, if $g$ is a Newton nondegenerate Laurent polynomial with Newton polytope $\delta\subset \Delta$ and $Y$ is a smooth projective compactification of $\Zo(g)$, then $\kappa(X)\geq \kappa(Y)$.
\end{proposition}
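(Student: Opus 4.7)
The plan is to compute $\kappa(X)$ via a toric resolution and adjunction. Pick a smooth regular refinement $\Sigma$ of the normal fan $\Sigma^{\Delta}$ whose rays include all primitive generators of the monoids $S_Q$ that appear in the description of $\Delta^{\FI}$. This yields a proper birational $\pi\colon \P(\Sigma)\to \P(\Delta)$ from a smooth toric variety, and by Newton nondegeneracy the strict transform $\tilde X\subset \P(\Sigma)$ of $Z(f)$ is smooth and proper; being birational to $X$, it satisfies $\kappa(X)=\kappa(\tilde X)$, so I can work with $\tilde X$ throughout.

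On $\P(\Sigma)$ we have $K_{\P(\Sigma)}=-\sum_\rho D_\rho$ and $\tilde X\sim -\sum_\rho \ord_\Delta(u_\rho)D_\rho$, so adjunction gives $K_{\tilde X}=(K_{\P(\Sigma)}+\tilde X)|_{\tilde X}$. The standard formula $P_D=\{m:\langle m,u_\rho\rangle+a_\rho\geq 0\}$ identifies the polyhedron of torus-invariant sections of $r(K_{\P(\Sigma)}+\tilde X)$ with
\[
\bigcap_\rho\{m:\langle m,u_\rho\rangle\geq r(\ord_\Delta(u_\rho)+1)\}=r\Delta^{\FI},
\]
the last equality guaranteed by the choice of $\Sigma$. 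Thus lattice points of $r\Delta^{\FI}$ index a basis of $H^0(\P(\Sigma),\OO(r(K_{\P(\Sigma)}+\tilde X)))$.

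I would then compare this to $H^0(\tilde X,rK_{\tilde X})$ via the restriction sequence
\[
0\to \OO_{\P(\Sigma)}(rK_{\P(\Sigma)}+(r-1)\tilde X)\to \OO_{\P(\Sigma)}(r(K_{\P(\Sigma)}+\tilde X))\to \OO_{\tilde X}(rK_{\tilde X})\to 0.
\]
Newton nondegeneracy ensures that relations among the restricted monomials on $\tilde X$ come exactly from multiples of the defining equation, i.e., from lattice points of the polyhedron of the left-hand sheaf. An Ehrhart/lattice-point counting argument then yields: if $\Delta^{\FI}=\emptyset$ both polyhedra are eventually empty and $\kappa(\tilde X)=-\infty$; if $\dim\Delta^{\FI}<\dim\Delta$, the subtracted polyhedron contributes a strictly lower-order term and plurigenera grow like $r^{\dim\Delta^{\FI}}$, giving $\kappa=\dim\Delta^{\FI}$; if $\dim\Delta^{\FI}=\dim\Delta$, both polyhedra are full-dimensional with cancelling leading terms, dropping the growth by one to $\dim\Delta-1=\dim\tilde X$ and producing the general type case.

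For the monotonicity, if $\delta\subset \Delta$ then $\ord_\delta(n)\geq \ord_\Delta(n)$ for every $0\neq n\in N$, so each halfspace $\Gamma_1^\delta(n)$ is contained in $\Gamma_1^\Delta(n)$, whence $\delta^{\FI}\subset \Delta^{\FI}$ and $\dim\delta^{\FI}\leq\dim\Delta^{\FI}$. A brief case analysis comparing $(\dim\delta^{\FI},\dim\delta)$ with $(\dim\Delta^{\FI},\dim\Delta)$, combined with the formula just derived, gives $\kappa(X)\geq\kappa(Y)$; the only potentially tricky case, $\dim\Delta^{\FI}=\dim\Delta$, is settled by the general bound $\kappa(Y)\leq \dim Y\leq \dim X=\kappa(X)$. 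The main obstacle throughout is the bookkeeping in the restriction sequence when $\Delta^{\FI}$ is full-dimensional: this is where the drop by one emerges and where Newton nondegeneracy is used essentially to pin down the kernel of restriction.
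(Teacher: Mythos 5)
Your outline — toric resolution refining the normal fan by the rays that cut out $\Delta^{\FI}$, adjunction, then the restriction sequence
\[
0\to \OO_{\P(\Sigma)}\bigl(rK_{\P(\Sigma)}+(r-1)\tilde X\bigr)\to \OO_{\P(\Sigma)}\bigl(r(K_{\P(\Sigma)}+\tilde X)\bigr)\to \OO_{\tilde X}(rK_{\tilde X})\to 0
\]
with lattice-point counting — is the natural strategy, and the identifications of the two polyhedra with (shifts of) $r\Delta^{\FI}$ are correct, as is the monotonicity argument via $\delta^{\FI}\subset\Delta^{\FI}$. However, the paper does not prove the non-empty cases this way: it simply cites \cite[Theorem 9.2]{Bat20}, and only handles $\Delta^{\FI}=\emptyset$ by a short adjunction argument. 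Your proposal is an attempt to reprove Batyrev's theorem from scratch, and it hides the actual technical content at exactly the point where you say an ``Ehrhart/lattice-point counting argument then yields'' the conclusion.

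The gap is this. Taking global sections of the restriction sequence, the lattice-point counts for the two ambient polyhedra compute only the dimension of the \emph{image} of the restriction map $H^0(\P(\Sigma), r(K_{\P(\Sigma)}+\tilde X))\to H^0(\tilde X, rK_{\tilde X})$, since $H^0$ is only left exact. This gives a lower bound on the plurigenera $P_r(\tilde X)$, hence on $\kappa$. That is enough for the general type case (because $\kappa\le\dim\tilde X$ always), but for $\Delta^{\FI}=\emptyset$ and for $0\le\dim\Delta^{\FI}<\dim\Delta$ you need the matching \emph{upper} bound $P_r(\tilde X)\le\#(r\Delta^{\FI}\cap M)$, i.e.\ surjectivity of the restriction map, which is equivalent to $H^1\bigl(\P(\Sigma), rK_{\P(\Sigma)}+(r-1)\tilde X\bigr)=0$. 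For $r=1$ this is just $H^1(\P(\Sigma),K_{\P(\Sigma)})=0$, which holds on a smooth projective toric variety; but for $r\ge 2$ you would want a Kawamata--Viehweg/toric vanishing for $K_{\P(\Sigma)}+(r-1)(K_{\P(\Sigma)}+\tilde X)$, and this would require $K_{\P(\Sigma)}+\tilde X$ (the divisor with polyhedron $\Delta^{\FI}$) to be nef on your chosen resolution. It generally is not: the support function determined by $\Delta^{\FI}$ need not be convex on $\Sigma$, precisely because $\Sigma$ was chosen to refine the normal fan of $\Delta$, not of $\Delta^{\FI}$. Circumventing this --- by passing to a suitable canonical model of the pair where the relevant divisor becomes semiample --- is the content of Batyrev's theorem that your proposal omits. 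The sentence ``Newton nondegeneracy ensures that relations among the restricted monomials on $\tilde X$ come exactly from multiples of the defining equation'' only re-asserts the (automatic) left-exactness and does not address surjectivity.

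In short: your route is a genuinely different one from the paper's (a direct proof versus a citation), the setup and the monotonicity part are sound, but the crucial vanishing that makes the lattice-point count compute $P_r(\tilde X)$ rather than merely bound it from below is asserted without justification, and it is exactly this step that is nontrivial.
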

\begin{proof}
When $\Delta^{\FI}\neq \emptyset$ this follows from \cite[Theorem 9.2]{Bat20}. When it is empty, it suffices to take a toric resolution of singularities $\widetilde{\P(\Delta)}$ of $\P(\Delta)$ and $X=Z(f)$. The canonical divisor of $X$ is the restriction of $K_{\widetilde{\P(\Delta)}}+X$ by adjunction, which has no global sections since $\Delta^{\FI}=\emptyset$. The restriction map is surjective, so the claim follows.

The property $\kappa(X)\geq \kappa(Y)$ follows from the fact that $\delta^{\FI}\subset \Delta^{\FI}$ whenever $\delta\subset \Delta$.
\end{proof}

\begin{remark}
Let $\Delta^\circ$ denote the relative interior of $\Delta$. Then $\Delta^\circ\cap M\subset \Delta^{\FI}$. Therefore, any interior lattice point is contained in the Fine interior.
\end{remark}

\begin{definition}
For a lattice polytope $\Delta$, define its Kodaira dimension $\kappa(\Delta)$ to be
\begin{equation*}
  \kappa(\Delta)=\begin{cases}
    -\infty &  \Delta^{\FI}=\emptyset \\
     \dim{\Delta^{\FI}} & 0\leq \dim{\Delta^{\FI}}<\dim\Delta \\
     \dim{\Delta^{\FI}}-1 & \dim{\Delta^{\FI}}=\dim\Delta
    \end{cases}
\end{equation*}
If $\kappa(\Delta)=\dim \Delta-1$ we say that $\Delta$ is of general type.
\end{definition}
\begin{example} \label{fineInt3DimEx}
The polytope $\Delta$ given by
\begin{equation*}
\Delta = \Conv \set{2e_2+2e_3, e_1+3e_2,2e_1+4e_2+3e_3, 3e_1+e_3}
\end{equation*}
has no interior lattice points (i.e., it is \emph{hollow}). The Fine interior $\Delta^{\FI}$ is the following 3-dimensional polytope\footnote{A Macaulay2 package containing methods to compute Fine interior and lattice width can be found at \url{https://github.com/simen94/M2/}.}.
\begin{align*}
\Delta^{\FI} = \frac{1}{5}\Conv \{7e_1&+12e_2+6e_3, 9e_1+9e_2+7e_3, \\ &6e_1+11e_2+8e_3,8e_1+13e_2+9e_3\}.
\end{align*}
So $\kappa(\Delta)=3$. There is only a finite number of hollow 3-dimensional polytopes (up to unimodular equivalence) with non-empty Fine interior \cite{BKS19}.
\end{example}

\subsection{Subdivisions}
Let $\Delta\subset M_\R$ be a lattice polytope. We say that a set $\PP$ of subpolytopes of $\Delta$ is a \emph{polyhedral subdivision} if
\begin{itemize}
\item[1)] $\cup_{\alpha\in \PP} \alpha = \Delta$,
\item[2)] for $\alpha,\beta\in \PP$ we have $\delta=\alpha\cap \beta\in \PP$ and $\delta$ is a face of both $\alpha$ and $\beta$.
\end{itemize}
 We say that elements of $\PP$ are cells of the subdivision. Furthermore, we say that a polyhedral subdivision $\PP$ is
\begin{enumerate}
    \item[1)] \emph{integral} if all polytopes in $\PP$ are lattice polytopes,
    \item[2)] \emph{regular} if there is a convex piecewise linear function $\phi\colon \Delta\to \R$ such that the linear domains of $\phi$ are exactly the cells of $\PP$.
\end{enumerate}

\begin{figure}[!h]
\begin{subfigure}{.45\textwidth}
\centering
\tdplotsetmaincoords{60}{-20}
\begin{tikzpicture}[tdplot_main_coords]	
\foreach \x in {0,...,2}{
    \tikzmath{\xx = 2 - \x;}
    \foreach \y in {0,...,\xx}{
        \draw[fill=black] (\x,\y) circle (1pt);
}}
\draw[fill=blue, opacity=0.3] (2,0,2) -- (1,0,1) -- (1,1,1) -- cycle;
\draw[fill=red, opacity=0.3] (0,0,1) -- (1,0,1) -- (1,1,1) -- (0,1,1) -- cycle;
\draw[fill=green, opacity=0.3] (0,1,1) -- (1,1,1) -- (0,2,2) -- cycle;
\draw[fill=blue, opacity=0.5] (2,0,0) -- (1,0,0) -- (1,1,0) -- cycle;
\draw[fill=red, opacity=0.5] (0,0,0) -- (1,0,0) -- (1,1,0) -- (0,1,0) -- cycle;
\draw[fill=green, opacity=0.5] (0,1,0) -- (1,1,0) -- (0,2,0) -- cycle;
\draw[dotted] (0,2,0) -- (0,2,2);
\draw[dotted] (0,1,0) -- (0,1,1);
\draw[dotted] (0,0,0) -- (0,0,1);
\draw[dotted] (1,1,0) -- (1,1,1);
\draw[dotted] (2,0,0) -- (2,0,2);
\draw[dotted] (1,0,0) -- (1,0,1);
\end{tikzpicture}
\caption{Integral regular polyhedral subdivision}
\label{fig:nonint}
\end{subfigure}
\begin{subfigure}{.45\textwidth}
\centering
\begin{tikzpicture}[scale=0.8]
    \draw (0,0)--(3,0)--(1.5,3)--(0,0);
    \draw (1,0.8)--(2,0.8)--(1.5,1.8)--(1,0.8);
    \draw (0,0)--(1,0.8);
    \draw (0,0)--(2,0.8);
    \draw (3,0)--(2,0.8);
    \draw (3,0)--(1.5,1.8);
    \draw (1.5,3)--(1.5,1.8);
    \draw (1.5,3)--(1,0.8);
    \foreach \Point in {(0,0), (3,0), (1.5,3), (1,0.8), (1.5,1.8), (2,0.8)}{
       \node at \Point {\textbullet};
    }
\end{tikzpicture}
\caption{Non-regular subdivision \cite[Example 2.2.5]{DRS10}}
\label{fig:nonreg}
\end{subfigure}
\end{figure}

\begin{example} \label{pullingSubDivEx}
  Let $\Delta$ be a lattice polytope and $\PP$ a regular subdivision. Let $p\in \Delta\cap M$ be a lattice point. The \emph{pulling refinement} of $\PP$ by the point $p$ is a subdivision $\operatorname{pull}_p(\PP)$ consisting of the following faces.
  \begin{itemize}
      \item[1)] If $\alpha\in \PP$ and $p\not\in \alpha$ then $\alpha\in \operatorname{pull}_p(\PP)$.
      \item[2)] If $\alpha\in \PP$ and $p\in\alpha$ then $\Conv(p,\beta)\in\operatorname{pull}_p(\PP)$ for every face $\beta$ of $\alpha$ with $p\not\in \beta$. 
  \end{itemize}
If $\PP$ is regular then $\operatorname{pull}_p(\PP)$ is also regular \cite[Lemma 4.3.12]{DRS10}. Indeed if a convex piecewise linear function induces $\PP$, then pulling down the value at $p$, one obtains a convex function inducing $\operatorname{pull}_p(\PP)$.
\end{example}

From a regular integral polyhedral subdivision $\PP$ of a polytope $\Delta$, one can construct a strictly toroidal degeneration \cite[Section 3.6]{NO20a}. Suppose $f$ is Newton nondegenerate with Newton polytope $\Delta$. From $\PP$, one can construct a degeneration such the special fiber consists of strata of the form $Z(f_\delta)$ for $\delta\in \PP$ but not contained in the boundary of $\Delta$. This leads to the following theorem, which is the primary tool for studying the stable rationality of polytopes.

\begin{theorem}[{\cite[Theorem 3.15]{NO20a}}] \label{tropicalVolume}
    Let $k$ be a field of characteristic 0. Consider $\R^{n+1}$ with its standard lattice $\Z^{n+1}$ for some positive integer $n$. Let $\Delta$ be a lattice polytope of dimension at least 2, and let $\PP$ be a regular integral polyhedral subdivision of $\Delta$. Let $f\in k[\Z^{n+1}]$ be a Laurent polynomial with Newton polytope $\Delta$ and assume that for every face $\delta\in \PP$ the variety $\Zo(f_\delta)$ is smooth over $k$.

    Suppose that 
    \begin{equation} \label{volumeFormula}
        (-1)^{n+1}\sum_{\substack{\alpha\in\PP \\ \alpha\centernot\subset \partial\Delta}} (-1)^{\dim{\alpha}}\sbb{\Zo(f_\alpha)} \neq \sbb{\Spec{k}}
    \end{equation}
    in $\SBB{k}$, then $\Delta$ is not stably rational.
\end{theorem}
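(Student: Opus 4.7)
\emph{Strategy.} The plan is to build a strictly toroidal $R$-model of $Z(f)\subset \P(\Delta)$ directly from the regular subdivision $\PP$ and then apply \cref{motivicVolumeThm}. The key point is that a regular integral subdivision naturally induces a one-parameter toric degeneration of $\P(\Delta)$, and the Newton nondegeneracy hypothesis ensures that cutting by the hypersurface preserves strict toroidality.

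\emph{Construction of the degeneration.} Pick an integral convex piecewise linear function $\phi\colon \Delta\to \R$ inducing $\PP$ and lift $f=\sum_m c_m \chi^m$ to
\begin{equation*}
\tilde f \;=\; \sum_{m\in\Delta\cap M} c_m\, t^{\phi(m)}\chi^m \;\in\; \RR[M].
\end{equation*}
The epigraph $\widetilde{\Delta}=\{(m,s)\in M_\R\times \R_{\geq 0}\colon m\in\Delta,\ s\geq \phi(m)\}$ is an unbounded polyhedron whose bounded faces are in bijection with the cells of $\PP$. Via the Mumford-style construction reviewed in \cite[Section 3.6]{NO20a} this data yields a proper toric $\RR$-scheme $\mathcal Y$ with generic fiber $\P(\Delta)$ and special fiber $\bigcup_{\alpha\in\PP}\P(\alpha)$ glued along common toric strata. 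Let $\X\subset\mathcal Y$ be the closure of $Z(f)\subset \mathcal Y_K$; equivalently, $\X$ is the zero locus of (the section corresponding to) $\tilde f$. Then $\X_K\simeq Z(f)$, and the induced stratification of $\X_k$ has strata exactly the $\Zo(f_\alpha)$ for $\alpha\in\PP$ not contained in $\partial\Delta$; cells $\alpha\subset \partial\Delta$ correspond to strata lying inside the boundary divisors of $\mathcal Y$ and are therefore absorbed into those ambient strata rather than appearing as new strata of $\X_k$.

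\emph{Toroidality and volume computation.} Étale-locally near every torus orbit, $\mathcal Y$ takes the form $\Spec \RR[M']/(\chi^{m'}-t^q)$ for a toric monoid $M'$, and Newton nondegeneracy of each $f_\alpha$ forces $\tilde f$, after dividing by the appropriate monomial, to be a unit in these charts, so $\X$ inherits the same local toroidal model. With $\X$ strictly toroidal, \cref{motivicVolumeThm} applies. A cell $\alpha$ of dimension $d$ yields a stratum $\Zo(f_\alpha)$ of dimension $d-1$ in $\X_k$ (itself of dimension $n$), hence codimension $n+1-d$. Collecting signs,
\begin{equation*}
\Volsb(Z(f)) \;=\; \sum_{\substack{\alpha\in\PP\\ \alpha\not\subset \partial\Delta}} (-1)^{n+1-\dim\alpha}\sbb{\Zo(f_\alpha)} \;=\; (-1)^{n+1}\sum_{\substack{\alpha\in\PP\\ \alpha\not\subset\partial\Delta}}(-1)^{\dim\alpha}\sbb{\Zo(f_\alpha)},
\end{equation*}
which by hypothesis is different from $\sbb{\Spec{k}}$. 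Hence $Z(f)$, and therefore $\Zo(f)$, is not stably rational over $K$; applying \cref{veryGeneralCor} to the family obtained by varying the coefficients of $f$ spreads this to very general Laurent polynomials with Newton polytope $\Delta$, so $\Delta$ is not stably rational in the sense of \cref{rationalPolytopeDef}.

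\emph{Main obstacle.} The delicate step is verifying strict toroidality of $\X$: the ambient $\mathcal Y$ is toroidal by standard Mumford/GKZ toric geometry, but cutting by the hypersurface requires that $\tilde f$ appear as a well-controlled local equation on every chart meeting a stratum. Newton nondegeneracy of all the $f_\delta$ is exactly the input needed to guarantee this (no unexpected singularities or extra components arise), and once it is in place the rest is a bookkeeping exercise in signs and codimensions.
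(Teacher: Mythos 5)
The paper cites this as \cite[Theorem 3.15]{NO20a} and does not reprove it, so your proposal is being compared to the argument there; your strategy (Mumford-type toric degeneration built from the regular subdivision, apply \cref{motivicVolumeThm}, then spread out via \cref{veryGeneralCor} and \cref{fieldReductionProp}) is the right outline, but the two steps you yourself flag as delicate are not actually settled by what you wrote. The strict toroidality claim is incorrect as stated: if ``$\tilde f$, after dividing by the appropriate monomial, [were] a unit'' on a toric chart, then $\X$ would be torus-invariant or empty there, contradicting that $Z(\tilde f)$ is a moving hypersurface. What Newton nondegeneracy actually provides is that $\X$ meets each logarithmic stratum of $\mathcal Y$ smoothly and dimensionally transversely; one then needs a log-regularity criterion such as \cref{logSubSchTheorem} (or the argument in \cite{NO20a}) to deduce that $\X$ is strictly toroidal.

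The second gap is the disposal of boundary cells. In \cref{motivicVolumeThm} the set $S(\X)$ consists of connected components of intersections of irreducible components of $\X_k$; there is no mechanism by which strata are ``absorbed'' into ambient horizontal boundary divisors of $\mathcal Y$. If you take $\X$ to be the closure of $Z(f)$ in all of $\mathcal Y$, then a cell $\gamma\subset\partial\Delta$ of codimension at least $2$ that is shared by several maximal cells can index a nonempty intersection of components of $\X_k$, and you would have to show separately that these terms do not enter the final formula. The clean fix is to take the closure of $\Zo(f)$ in the open toric $R$-subscheme of $\mathcal Y$ whose generic fiber is the dense torus $\Gmk{K}^{n+1}$ rather than $\P(\Delta)_K$ — properness is not required in the definition of strictly toroidal — and then the strata are indexed precisely by the cells $\alpha\not\subset\partial\Delta$ with the correct codimension shift. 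Finally, note that $\X_K$ is the vanishing locus of $\tilde f|_K = \sum_m c_m t^{\phi(m)}\chi^m$, which is not isomorphic to $Z(f)\times_k K$ because $\phi$ is only piecewise linear and hence the twist is not a torus automorphism; this is harmless for the conclusion (the theorem concerns the very general member of the linear system), but the identification $\X_K\simeq Z(f)$ as written is wrong.
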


Most degenerations (or subdivisions) do not lead to an obstruction in the sense of \cref{tropicalVolume}. But they are still helpful for studying variation of stable birational type.

\begin{definition}
    Let $\X$ be a strictly toroidal $R$-scheme. We say that $\X$ is an \emph{unobstructed} degeneration if $\Volsb(\X_K)=\sbb{\Spec{k}}$. Similarly, we say that a subdivision is \emph{unobstructed} if it induces an unobstructed degeneration.
\end{definition}

\begin{example}
    Let $\Delta$ be a lattice polytope with $\kappa(\Delta)=-\infty$. Then any unimodular triangulation of $\Delta$ is unobstructed.
\end{example}

\subsection{Empty and Relatively Empty Polytopes}
Subdivisions, and more generally, strictly toroidal degenerations, are the primary tool for studying stable rationality of polytopes. From this point of view, the polytopes that admit no subdivisions are atomic pieces. Two types are fundamentally important: The \emph{empty simplices}, admitting no nontrivial polyhedral subdivisions, and \emph{relatively empty polytopes}, where all but one lattice point is concentrated in a single face.

\begin{definition}
    Let $\Delta\subset M_\R$ be a lattice polytope. We say that $\Delta$ is
    \begin{itemize}
      \item \emph{empty} if the set of lattice points equals the set of vertices. We say that $\Delta$ is an \emph{empty simplex} if this set has size $\dim(\Delta)+1$,
      \item \emph{relatively empty} if there is a face $\delta\subset\Delta$ such that
      \begin{align*}
          |M\cap (\Delta\setminus \delta)| = 1.
      \end{align*}
      In this case, $\Delta$ is empty relative to $\delta$.
      \item \emph{hollow} if $\Delta^\circ\cap M=\emptyset$.
    \end{itemize}
    In particular, empty $\implies$ relatively empty $\implies$ hollow.
\end{definition}
\begin{remark}
If $\Delta$ is a relatively empty polytope relative to $\delta$, then any nontrivial subdivision restricts to a nontrivial subdivision in $\delta$.
\end{remark}
    
\begin{figure}[!ht]
  \begin{subfigure}{.4\textwidth}
      \centering
      \tdplotsetmaincoords{80}{410}
\begin{tikzpicture}[tdplot_main_coords, scale=0.6]	

\draw (0,0,0) -- (4,0,0);
\draw[dashed] (0,0,0) -- (0,4,0);
\draw (0,0,0) -- (-3,0,4);
\draw (4,0,0) -- (0,4,0);
\draw (4,0,0) -- (-3,0,4);
\draw (0,4,0) -- (-3,0,4);

\draw[fill=gray] (0,0,0) circle (3pt);
\draw[fill=gray] (4,0,0) circle (3pt);
\draw[fill=gray] (0,4,0) circle (3pt);
\draw[fill=gray] (-3,0,4) circle (3pt);

\end{tikzpicture}
      \caption{Empty simplex}
      \label{fig:emptySimp}
  \end{subfigure}
  \hspace{0.5cm}
  \begin{subfigure}{.4\textwidth}
      \centering
      \tdplotsetmaincoords{80}{410}
\begin{tikzpicture}[tdplot_main_coords, scale=0.6]	
\draw (0,0,0) -- (4,0,0);
\draw[dashed] (0,0,0) -- (0,4,0);
\draw (0,0,0) -- (-2,-1,4);
\draw (4,0,0) -- (0,4,0);
\draw (4,0,0) -- (-2,-1,4);
\draw (0,4,0) -- (-2,-1,4);
\draw[fill=gray] (0,0,0) circle (3pt);
\draw[fill=gray] (4,0,0) circle (3pt);
\draw[fill=gray] (0,4,0) circle (3pt);
\draw[fill=gray] (-2,-1,4) circle (3pt);
\draw[fill=gray] (2,2,0) circle (3pt);
\draw[fill=gray] (2,0,0) circle (3pt);
\draw[fill=gray] (0,2,0) circle (3pt);
\end{tikzpicture}
      \caption{A relatively empty simplex}
      \label{fig:relemptySimp}
  \end{subfigure}
\end{figure}

The following proposition shows empty simplices have a unique isomorphism class in the linear system.

\begin{proposition} \label{birationalTypeEmptySimp}
    Let $F$ be an algebraically closed field, and $\Delta\subset \R^n$ an empty simplex. Let $f,g\in F[M]$ be Laurent polynomials with Newton polytope $\Delta$. Then $Z(f)$ and $Z(g)$ are isomorphic.
\end{proposition}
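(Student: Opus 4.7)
Because $\Delta$ is an empty simplex, its lattice points are exactly its $n+1$ vertices $v_0,\dots,v_n$, where $n=\dim\Delta$. Any Laurent polynomial with Newton polytope $\Delta$ therefore has the form $f=\sum_{i=0}^n c_i\chi^{v_i}$ with all $c_i\in F^\times$, and similarly $g=\sum_{i=0}^n d_i\chi^{v_i}$ with all $d_i\in F^\times$. The plan is to produce an automorphism of the toric variety $\P_F(\Delta)$, induced by translation by an element of the dense torus, that carries $Z(g)$ isomorphically onto $Z(f)$.

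After translating $\Delta$ by $-v_0$ we may assume $v_0=0$. Let $M'$ denote the intersection of the affine span of $\Delta$ with $M$, so that the dense torus of $\P_F(\Delta)$ is $T=\Spec F[M']$; it acts on the sections $\chi^{v_i}\in H^0(\P_F(\Delta),\Lb(\Delta))$ diagonally through the characters $t\mapsto t^{v_i}$. Finding $t\in T(F)$ and $\lambda\in F^\times$ such that the pullback $t\cdot f$ equals $\lambda g$ amounts to solving the system
\[
  c_i\,t^{v_i}=\lambda\,d_i,\qquad i=0,\dots,n.
\]
The $i=0$ equation forces $\lambda=c_0/d_0$, and the remaining $n$ equations become $t^{v_i}=c_0 d_i/(d_0 c_i)$ for $i=1,\dots,n$.

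Consider the morphism of tori
\[
  \phi\colon T\longrightarrow \Gm^n,\qquad t\longmapsto (t^{v_1},\dots,t^{v_n}).
\]
Its effect on character lattices is the map $\Z^n\to M'$ sending $e_i\mapsto v_i$. This is injective because $v_1,\dots,v_n$ are $\Q$-linearly independent (they are the nonzero vertices of a full-dimensional simplex in its affine span), so its image has finite index in $M'$; hence $\phi$ is an isogeny of tori. As $F$ is algebraically closed, $\phi$ is surjective on $F$-points, so we may solve the system above for $t$. The automorphism of $\P_F(\Delta)$ given by translation by $t$ then identifies $Z(g)=Z(\lambda g)$ with the $t$-translate of $Z(f)$, which is isomorphic to $Z(f)$.

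The only genuine subtlety is the surjectivity of $\phi$ when $\Delta$ is not unimodular (as in a Reeve simplex): there $\phi$ is a proper isogeny of degree equal to the lattice index $[M':\langle v_1,\dots,v_n\rangle]$, and the argument relies essentially on $F$ being algebraically closed in order to extract the required roots of the coefficient ratios. Once this surjectivity is granted, the proof is purely a matter of making the torus action explicit on the $(n+1)$-dimensional section space $H^0(\P_F(\Delta),\Lb(\Delta))$.
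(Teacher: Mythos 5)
Your proof is correct and follows the same approach as the paper's: act by the dense torus on the $(n+1)$-dimensional space of sections $H^0(\P_F(\Delta),\Lb(\Delta))$ and show that any two sections with Newton polytope $\Delta$ lie in the same orbit. The paper simply asserts this "for dimension reasons", while you make the necessary input explicit — the character map $\Z^n\to M'$, $e_i\mapsto v_i$, is injective with finite cokernel, so $\phi$ is an isogeny and hence surjective on $F$-points because $F$ is algebraically closed. This is exactly the content the paper's one-line dimension count is silently invoking, and you are right to flag that algebraic closure is essential when $\Delta$ is not unimodular.
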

\begin{proof}
The linear system $V = \P H^0(\P(\Delta),\Lb(\Delta))$ is $n$-dimensional with coordinates in correspondence with the vertices of $\Delta$. There is an induced torus action of $(\C^\ast)^n$ on $V$, and for dimension reasons, the orbit of any element of $V$ under this action is $V$. So there is a unique section of $V$ (with Newton polytope $\Delta$) up to scaling by $(\C^\ast)^n$.
\end{proof}

\begin{example} \label{generalTypePolytope}
    Empty simplices can have large Fine interiors. Consider the  polytope $\Delta\subset \R^4$ given as
    \begin{equation*}
       \Delta = \Conv \{0, e_1, e_2, e_3, e_4, 6e_1+14e_2+17e_3+65e_4 \}.
  \end{equation*}
  This is an empty simplex, and the Fine interior $\Delta^{\FI}$ is full dimensional, so $\Delta$ is of general type.
\end{example}

\begin{example} \label{TotaroKollarPolytopes}
    In contrast to \cref{generalTypePolytope}, there exist empty simplices with an empty Fine interior, which is not stably rational. Consider $\Delta_{n,d}\subset \R^{n+1}$ given as the convex hull following.
    \begin{align*}
    \Delta_{n,d} = \Conv \set{2e_1, e_2, (d-1)e_2+e_3, \dots, (d-1)e_{n} + e_{n+1}, (d-1)e_{n+1}}
    \end{align*}
    This is the Newton polytope of a double cover of $\P^n$ branched along the hypersurface defined by $x_0^{d-1}x_1+\dots +x_{n-1}^{d-1}x_n + x_n^{d-1}x_0$. \Kollar proved that these are not rational when $d\geq 2\ceil{(n+3)/3}$ and $n,d$ are even \cite{Kol95}. Totaro improved this and showed that they are not stably rational when $d\geq 2\ceil{(n+2)/3}$ \cite{Tot16}. The polytope $\Delta_{4,4}$ gives an example of a non-stably rational empty simplex with an empty Fine interior.
\end{example}

\begin{example} \label{rationalCubicsEmpty}
The following polytopes are examples of rational empty simplices birational to cubic hypersurfaces. For $n$ odd, consider the cubic hypersurfaces defined by $x_0^2x_1+\dots + x_{n-1}^2x_n + x_n^2x_0$. These contain disjoint linear subspaces of dimension $(n-1)/2$, so they are rational. The Newton polytopes are given by
\begin{equation*}
\Conv \{e_1, 3e_1+e_2, \dots, 3e_{n-1} + e_{n}, 3e_{n}\}
\end{equation*}
and they are empty simplices.
\end{example}

\begin{remark}
We have the following low-dimensional classification of empty simplices.
\begin{itemize}
    \item[] $\dim =1$. A unique one, a line segment of length 1.
    \item[] $\dim =2$. The polytope is unimodularly equivalent to the simplex $\Delta_2$.
    \item[] $\dim =3$. The polytope is unimodular equivalent to a tetrahedron
    \begin{equation*}
        T(p,q) = \Conv{\set{0, e_1, e_3, pe_1+qe_2+e_3}}
    \end{equation*}
    where $p\in \set{1,\dots, q}$ and $\gcd(p,q)=1$ \cite{Wh64}. All of these are of width 1.
    \item[] $\dim =4$. A complete classification is given in \cite{IS21}. An infinite number of these are of widths 1 and 2. Finitely many of width 3 and a unique simplex of width 4 (the one in \cref{generalTypePolytope}).
    \item[] $\dim \geq 5$. No classification is known.
\end{itemize}
\end{remark}

\begin{example}
Since the stable birational type in empty simplices is constant, one cannot control stable birational types using variation. So, the following question is key to dealing with these.
\begin{itemize}
\item[(Q1)] Are there examples of empty simplices $\Delta,\Delta'$ that are not unimodular equivalent, not stably rational, but stably birational?
\end{itemize}
\end{example}

\begin{example}
Let us give a geometric interpretation of the problem caused by relatively empty polytopes. Suppose $\delta$ is not stably rational and is contained in a larger polytope $\Delta'$. Our strategy to prove that $\Delta'$ is not stably rational is to construct a subdivision containing $\delta$ and show that the obstruction in \cref{tropicalVolume} is nontrivial. Since $\delta$ and polytopes that have it as a face may appear with opposite signs, after possibly subdividing further, we end up needing the following property to ensure that no cancelation occurs.

Let $f$ be a very general Laurent polynomial with Newton polytope $\Delta$ such that $\Delta$ contains $\delta$ as a face and is empty relative to $\delta$. Then $Z(f)$ is birational to a cyclic cover of projective space with a component in the branch locus birational to $Z(f_\delta)$. This leads to the following natural question, to which we do not know the answer.
\begin{itemize}
\item[(Q2)] Is there a cyclic cover $X\to \P^n$ branched along $Z\subset \P^n$ such that $X$ is not stably rational but stably birational to a component of $Z$? 
\end{itemize}
\end{example}

\section{Variation of stable birational type} \label{section3}
Given a convex polytope $\Delta$, one asks whether it is stably rational. The strategy developed in \cite{NO20a} is as follows. Identify a subpolytope $\delta\subset\Delta$ that is not contained in the boundary of $\Delta$. Then construct a subdivision of $\Delta$ such that the obstruction in \cref{tropicalVolume} is nontrivial. Typically, one will try to construct a subdivision containing $\delta$ such that all other cells are stably rational. This will prevent cancellation in \eqref{volumeFormula}. 

It is unclear whether we can arrange a situation where no term in \eqref{volumeFormula} cancels with the term corresponding to $\delta$. It is a problem concerning a strong type of variation of stable birational type; we need to vary the stable birational type while keeping monomials contained in a face constant. This section shows that this property holds for a large class of polytopes. This allows us to prove that the property of not being stably rational is preserved under inclusion for these polytopes. 
\begin{definition} \label{variationPolytopes}
    Let $\alpha,\beta\subset M_\R$ be lattice polytopes. We say that
    \begin{itemize}
    \item[1)] $\alpha$ has \emph{variation of stable birational type} (variation) if $\sbb{\Zo(f)}$ is non-constant as $f$ varies over Newton non-degenerate Laurent polynomials with Newton polytope $\Delta$.
    \item[2)] The pair $(\alpha,\beta)$ has \emph{variation of stable birational type} (variation) if
    \begin{equation*}
    \sbb{\Zo(f_\delta)} \neq \sbb{\Zo(f_\alpha)}.
    \end{equation*}
    For a very general Laurent polynomial $f$ with support $\alpha\cup\beta$. 
    \end{itemize}
    Fix a lattice polytope $\delta\subset M_\R$. Then
    \begin{itemize}
    \item[3)] $\delta$ has \emph{strong variation of stable birational type} (strong variation)  if $(\delta,\alpha)$ has variation for any $\alpha\subset M_\R$ with $\alpha\cap \delta^\circ=\emptyset$.
    \end{itemize}
    In particular, when $\alpha\cap\beta=\emptyset$, then $(\alpha,\beta)$ has variation if and only if $\alpha$ is not stably birational to $\beta$.
\end{definition}

\begin{remark}
Note that the third property implies the first by taking $\alpha$ as a translation of $\delta$ such that $\alpha\cap\delta=\emptyset$.
\end{remark}

\begin{example}
The empty simplices do not have variation by \cref{birationalTypeEmptySimp}. Similarly, relatively empty polytopes do not have strong variation, but they may have variation.
\end{example}

\begin{example}
The following cases are established in \cite[Theorem 4.1]{NO20a}.
\begin{itemize}
    \item If $\alpha$ has an unobstructed subdivision and is not stably rational, then it has variation.
    \item $(\alpha,\beta)$ has variation when $\alpha\cap \beta = \emptyset$ and when $\alpha$ or $\beta$ has variation.
    \item $(\delta,\Delta)$ has variation when $\delta\subset \Delta$ is a face and $\Delta$ admits a subdivision $\PP$ such that $\delta\in\PP$ and every face $\alpha\in \PP\setminus \set{\delta}$ with $\alpha\cap \delta\neq \emptyset$ is stably rational.
\end{itemize}
\end{example}

To study the variational properties of polytopes, it is useful to have a generalization of \cite[Prop. 3.10]{NO20a}. It states that a polytope $\Delta$ is not stably rational if there is a Newton non-degenerate Laurent polynomial $g\in F[M]$ with Newton polytope $\Delta$ for some algebraically closed field $F$ of characteristic 0 such that $\Zo(g)$ is not stably rational. A similar property holds for pairs of polytopes using more or less the same argument as in \cite[Prop. 3.10]{NO20a}.

\begin{proposition} \label{fieldReductionProp}
Let $\alpha_i\subset M_\R,~i=1,2$ be lattice polytopes. If for some algebraically closed field $F$ of characteristic 0, there is a Newton non-degenerate Laurent polynomial $g\in F[M]$ such that

  \begin{enumerate}[label=\arabic*)]
  \item $g_{\alpha_i}$ has Newton polytope $\alpha_i$,
  \item $g_{\alpha_i}$ is Newton non-degenerate,
  \item $\Zo(g_{\alpha_1})$ is not stably birational to $\Zo(g_{\alpha_2})$.
  \end{enumerate}
  Then a very general Laurent polynomial $g$ over any algebraically closed field of characteristic 0 satisfying the condition (1-2) also satisfies (3). In particular, $\alpha_1$ and $\alpha_2$ are not stably birational.
\end{proposition}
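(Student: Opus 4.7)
The approach parallels the single-polytope case \cite[Prop.~3.10]{NO20a}: set up a universal parameter space, invoke \cref{veryGeneralCor} to describe the stable birationality locus as a countable union of closed subschemes, and use the single witness to rule out any of them exhausting the parameter space.

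Let $U \subset \A^N_{\Q}$ denote the open subscheme parametrizing Laurent polynomials $g \in \Q[M]$ with support contained in $(\alpha_1 \cup \alpha_2)\cap M$ and satisfying conditions (1)--(2); both conditions are Zariski-open, the first because it amounts to non-vanishing of the vertex coefficients and the second by the standard Bertini-type openness of Newton non-degeneracy, so $U$ is a smooth, geometrically irreducible $\Q$-scheme. Fixing toric resolutions $\widetilde{\P(\alpha_i)} \to \P(\alpha_i)$ over $\Q$, the universal Laurent polynomial on $U$ yields, via \cite[Prop.~3.3c]{NO20a}, smooth proper families $\pi_i\colon \mathcal{X}_i \to U$ whose geometric fibers over $g$ are smooth projective compactifications of $\Zo(g_{\alpha_i})$. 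Applying \cref{veryGeneralCor} to the pair $(\pi_1,\pi_2)$, the locus
\[
S \;=\; \sett{u\in U}{(\mathcal{X}_1)_{\overline{u}}~\text{is stably birational to}~(\mathcal{X}_2)_{\overline{u}}}
\]
is a countable union $\bigcup_{j\geq 1} Z_j$ of closed subschemes of $U$.

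The hypothesis provides a geometric point $u_0 \in U(F)$ outside $S$, so $u_0 \notin Z_j(F)$ for every $j$, whence $(Z_j)_F \subsetneq U_F$. Geometric irreducibility of $U$ upgrades this to $Z_j \subsetneq U$ as $\Q$-schemes, and therefore to $(Z_j)_k \subsetneq U_k$ for any algebraically closed field $k$ of characteristic $0$. Thus $\bigcup_j (Z_j)_k$ is a countable union of proper closed subsets of $U_k$, and any $g \in U(k)$ outside it satisfies (3); this proves the first assertion. The ``in particular'' statement then follows by translating $\alpha_2$ to $\alpha_2+v$ for a lattice vector $v$ making the two supports disjoint, which preserves the isomorphism type of $\Zo(g_{\alpha_2})$ and identifies $U$ with the product of parameter spaces for each polytope, so that a very general point of $U(k)$ projects to a very general pair $(f_1,f_2)$ in the sense of \cref{rationalPolytopeDef}.

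The main technical subtlety is ensuring that $\pi_1$ and $\pi_2$ are smooth and proper over all of $U$, and not just over some further Zariski-open subset; this is precisely the role of the Newton non-degeneracy condition on every face of each $\alpha_i$, together with the fact that a single toric resolution of $\P(\alpha_i)$ simultaneously resolves every fiber, as encoded in \cite[Prop.~3.3c]{NO20a}. Once this is in place, the spreading-out argument is entirely formal.
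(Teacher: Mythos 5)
Your proposal is essentially the paper's proof, with the same structure: build universal families over a parameter space defined over $\Q$, invoke \cref{veryGeneralCor} to present the stably-birational locus as a countable union of closed $\Q$-subschemes, and use the $F$-point witness together with geometric irreducibility of the base to conclude that each closed piece is proper over every algebraically closed field of characteristic zero. The only noteworthy variant is how the smoothness of the families $\pi_i$ is arranged: you carve out $U$ by imposing Newton non-degeneracy of $g_{\alpha_1}$ and $g_{\alpha_2}$ directly (citing openness of that condition), whereas the paper instead fixes a slightly larger $U'$ defined only by condition (1), and then invokes Bertini to pass to a dense open $V\subset U'$ over which the universal families are smooth. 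These are equivalent in effect, but the paper's route avoids needing to assert that Newton non-degeneracy is a Zariski-open condition on the nose; it only needs generic smoothness plus the fact that the witness point lands in the resulting open $V$ (which follows from \cite[Prop.~3.3c]{NO20a}). If you keep your version, it would be safer to define $U$ (or at least the open set over which you run \cref{veryGeneralCor}) as the locus where the strict transforms in the fixed toric resolutions are smooth — that is manifestly open by properness, contains the Newton non-degenerate locus, and contains your witness. Your treatment of the ``in particular'' clause via a disjointing translation of $\alpha_2$ is a clean way to handle the case $\alpha_1\cap\alpha_2\neq\emptyset$ and is more explicit than what the paper writes.
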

\begin{proof}
  Let $\Delta$ be the convex hull of $\alpha_1\cup \alpha_2$ and $\pi\colon X\to \P_{\Q}(\Delta)$ a toric resolution of singularities. 
  Consider the universal family
  \begin{equation*}
    \theta\colon\Y\times_{\P H^0(\P_{\Q}(\Delta), \Lb(\Delta))} U\to U
  \end{equation*}
  where $U\subset \P H^0(\P_{\Q}(\Delta), \Lb(\Delta))$ is the Zariski open subset parameterizing hypersurfaces with Newton polytope $\Delta$ and $\Y$ is the universal family of the linear system $\pi^\ast\Lb(\Delta)$ (see \cite[Prop. 3.10]{NO20a} for details). For any algebraically closed field $F$ of characteristic 0, the $F$-points $u$ of $U$ are parametrizing Laurent polynomials $g\in F[M]$ with Newton polytope $\Delta$ up to scaling by a unit in $F$. Similarly, the fibers $\theta^{-1}(u)$ are realized as the closure of $\pi^{-1}(\Zo(g))$ in $X\times_{\Q}F$. Similarly, let
  \begin{align*}
    &\theta_{\alpha_1}\colon\Y_{\alpha_1}\times_{\P H^0(\P_{\Q}(\alpha_1), \Lb(\alpha_1))} U_{\alpha_1}\to U_{\alpha_1} \\
    &\theta_{\alpha_2}\colon\Y_{\alpha_2}\times_{\P H^0(\P_{\Q}(\alpha_2), \Lb(\alpha_2))} U_{\alpha_2}\to U_{\alpha_2}
  \end{align*}
  denote the corresponding universal families constructed from $\alpha_1$ and $\alpha_2$. Let $U'_{\alpha_i}\subset U$ denote the open dense subset of Laurent polynomials $g$ such that $g_{\alpha_i}$ has Newton polytope $\alpha_i$. The two maps
  \begin{equation*}
    \begin{tikzcd}
{\P H^0(\P_{\Q}(\Delta), \Lb(\Delta))} \arrow[d, "\phi_{\alpha_1}", dashed] \arrow[r, "\phi_{\alpha_2}", dashed] & {\P H^0(\P_{\Q}(\alpha_2), \Lb(\alpha_2))} \\
{\P H^0(\P_{\Q}(\alpha_1), \Lb(\alpha_1))}
\end{tikzcd}
  \end{equation*}
  are both defined on $U'=U'_{\alpha_1}\cap U'_{\alpha_2}$. The map $\phi_{\alpha_i}$ maps $U'$ into $U_{\alpha_i}$. By Bertini's Theorem, there is a dense open subset $V\subset U'$ such that the following maps are smooth.
  \begin{align*}
     \psi_{\alpha_1}\colon\Y_{\alpha_1}\times_{\P H^0(\P_{\Q}(\alpha_1), \Lb(\alpha_1))} V\to V, \qquad \psi_{\alpha_2}\colon\Y_{\alpha_2}\times_{\P H^0(\P_{\Q}(\alpha_2),\Lb(\alpha_2))} V\to V.
  \end{align*}
  Let $g$ denote an $F$-point of $V$. Then the fiber $\psi_{\alpha_i}^{-1}(g)$ is the closure of $\Zo(g_{\alpha_i})$ in $X_{\alpha_i}\times_\Q F$. In particular, the set of points in $V$ with stably birational fibers is a countable union of Zariski closed subsets by \cref{veryGeneralCor}. In particular, if a single pair of geometric fibers is not stably birational, then the fibers over a very general geometric point of $V$ are not stably birational.
\end{proof}

We first note that if the intersection of the polytopes in question is small enough, we have variation.

\begin{proposition}
    Let $\alpha,\beta\subset M_\R$ be non-stably irrational lattice polytopes that meet on a common face $\alpha\cap\beta=\delta$. If $\delta$ is an empty simplex (e.g., a point) and either $\alpha$ or $\beta$ has variation of stable birational type (e.g., admits an unobstructed subdivision), then $(\alpha,\beta)$ has variation of stable birational type.
\end{proposition}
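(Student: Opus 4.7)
The plan is to reduce, via \cref{fieldReductionProp}, to exhibiting a single explicit witness over some field, and then to construct one by exploiting the empty-simplex rigidity of $\delta$ together with the variation of $\alpha$ (or $\beta$). More precisely, the proof of \cref{fieldReductionProp} adapts with ``Newton polytope $\Delta$'' replaced by ``support $\alpha\cup\beta$'' throughout (the parameter space of such Laurent polynomials is still an affine space and the universal-family argument goes through unchanged), so it suffices to construct, over some algebraically closed field $F$ of characteristic zero, a single Newton non-degenerate $g\in F[M]$ with support $\alpha\cup\beta$ whose restrictions $g_\alpha,g_\beta$ are Newton non-degenerate with Newton polytopes $\alpha,\beta$ respectively, and such that $\Zo(g_\alpha)$ is not stably birational to $\Zo(g_\beta)$.

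Without loss of generality assume $\alpha$ has variation. Fix a Newton non-degenerate $h\in F[M]$ with Newton polytope $\beta$. By variation of $\alpha$, one can choose a Newton non-degenerate $f\in F[M]$ with Newton polytope $\alpha$ with $\sbb{\Zo(f)}\neq\sbb{\Zo(h)}$ (at least one of the two stable birational classes attained on a dense locus differs from $\sbb{\Zo(h)}$). The task is to glue $f$ and $h$ into a single Laurent polynomial of support $\alpha\cup\beta$, which requires the $\delta$-restrictions to agree. Here the empty-simplex hypothesis enters: by the argument in the proof of \cref{birationalTypeEmptySimp}, the dense torus acts transitively up to scalar on sections of $\Lb(\delta)$ with Newton polytope $\delta$, so there exist $\lambda\in(F^\ast)^n$ and $c\in F^\ast$ with $f'(x):=c\cdot f(\lambda x)$ satisfying $f'_\delta=h_\delta$. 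The polynomial $f'$ has Newton polytope $\alpha$ and $\Zo(f')\cong\Zo(f)$. Since $\alpha\cap\beta=\delta$, the polynomial
\[
g := (f'-f'_\delta) + h
\]
has support contained in $\alpha\cup\beta$, and the restrictions satisfy $g_\alpha=f'$ and $g_\beta=h$; hence $\sbb{\Zo(g_\alpha)}\neq\sbb{\Zo(g_\beta)}$.

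The main obstacle I anticipate is ensuring that $g$ itself is Newton non-degenerate (not merely its restrictions to $\alpha$ and $\beta$), since the Newton polytope of $g$ is $\Conv(\alpha\cup\beta)$, whose face structure need not be exhausted by faces of $\alpha$ and $\beta$. However, Newton non-degeneracy is a Zariski-open condition on the affine space of Laurent polynomials with support $\alpha\cup\beta$, and the construction $(f,h)\mapsto g$ realizes our candidate witnesses as a family parametrized by an irreducible variety—concretely the fiber product, over the space of $\delta$-sections (up to the torus action), of the open subsets of Newton non-degenerate $f$ and $h$. By \cref{veryGeneralCor} the locus where $\sbb{\Zo(g_\alpha)}=\sbb{\Zo(g_\beta)}$ is a countable union of proper closed subsets of this family, so a very general choice in the family yields a Newton non-degenerate $g$ with the required property, completing the proof.
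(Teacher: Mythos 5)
Your proof is correct and follows essentially the same route as the paper: use the empty-simplex rigidity of $\delta$ (\cref{birationalTypeEmptySimp}) to decouple the $\delta$-restriction from the remaining coefficients, then invoke variation of $\alpha$ to separate the stable birational types. The paper compresses this into one sentence by working directly with a very general $g$ supported on $\alpha\cup\beta$; your explicit witness construction $g=(f'-f'_\delta)+h$ together with the detour through \cref{fieldReductionProp} amounts to the same argument, and the extra care you take about Newton non-degeneracy of the glued $g$ addresses a point the paper leaves implicit.
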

\begin{proof}
    Let $g$ be a very general Laurent polynomial with Newton polytope $\Delta$. By \cref{birationalTypeEmptySimp}, we may assume that $g_\delta$ is constant; hence, the question is equivalent to asking whether $\sbb{\Zo(f)}\neq \sbb{\Zo(g)}$ where $f$ and $g$ are very general with Newton polytope $\alpha$ and $\beta$, respectively. The result now follows since $\alpha$ or $\beta$ has variation of stable birational type.
\end{proof}

\subsection{Smooth subpolytopes}
The goal for the rest of this section is to study variation of polytopes. Our strategy is constructing unobstructed degenerations (cf. \cref{variationExample}). To prove strong variation, we need to construct sufficiently many unobstructed degenerations; specifically, this means that for any face, we construct an unobstructed degeneration that is trivial when restricted to the monomials supported in this face. We first show that this holds for certain smooth polytopes, then give a condition for singular polytopes.

A lattice polytope $\Delta$ is smooth if the following property holds for every vertex $v\in \Delta$.
\begin{itemize}
    \item[] After a translation, assume that $v$ is the origin of $M_\R$. Then the cone spanned by the edges containing $v$ is simple. That is, primitive generators form part of a basis for $M$.
\end{itemize}
For a lattice polytope $\Delta\subset M_\R$ and for a ray $\rho\in\Sigma^{\Delta}(1)$ denote by $F_\rho$ the facet of $\Delta$ such that $\rho$ is normal to $F_\rho$. Let $D_\Delta$ be a torus-invariant divisor such that $\OO_{\P(\Delta)}(D_\Delta)=\Lb(\Delta)$. The global torus invariant sections of $D_\Delta$ correspond bijectively to $\Delta\cap M$, and the global torus invariant sections of $D_\Delta-D_\rho$ correspond bijectively to the lattice points of the polytope obtained by taking $\Delta$ and shifting the supporting half-space containing the span of $F_\rho$ inwards by one. We denote this polytope by $\Delta(-F_\rho)$. In general, this is not a lattice polytope, but it is when $\Delta$ is smooth.

\begin{lemma}
    Let $\Delta$ be a smooth lattice polytope. Then $\Delta(-F_\rho)$ is a lattice polytope for all $\rho\in\Sigma^\Delta(1)$.
\end{lemma}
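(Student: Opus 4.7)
The plan is to analyze the polytope $\Delta(-F_\rho)$ vertex-by-vertex, using smoothness at each vertex of $\Delta$ on $F_\rho$ to show that the pushed-in facet passes through lattice points. The new vertices of $\Delta(-F_\rho)$ are of two types: vertices of $\Delta$ satisfying $\langle m, u_\rho\rangle \geq \ord_\Delta(u_\rho)+1$ (already lattice points), and intersection points of the shifted hyperplane $\{m : \langle m,u_\rho\rangle = \ord_\Delta(u_\rho)+1\}$ with edges of $\Delta$ joining a vertex $v\in F_\rho$ to a vertex $w\notin F_\rho$. Since the shift is by one unit and $F_\rho$ is a facet, no other edges meet the shifted hyperplane. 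So it suffices to show each such intersection point is a lattice point.

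Fix such a vertex $v\in F_\rho$, and after translation, assume $v=0$. The plan is to use smoothness at $v$ to pin down the primitive normal $u_\rho$ in the local basis given by the edge directions. By the smoothness hypothesis, the primitive generators $e_1,\dots,e_n$ of the edges at $v$ form a $\Z$-basis for $M$. Because $F_\rho$ is a facet through $v$, exactly $n-1$ of these generators lie in $F_\rho$, say $e_1,\dots,e_{n-1}$, spanning the tangent cone of $F_\rho$ at $v$; the remaining generator $e_n$ is the unique primitive edge direction at $v$ pointing into the interior of $\Delta$. Writing $u_\rho$ in the dual basis $e_1^\vee,\dots,e_n^\vee$ gives $\langle e_i,u_\rho\rangle=0$ for $i<n$ and $\langle e_n,u_\rho\rangle>0$, hence $u_\rho = c\, e_n^\vee$ for a positive integer $c$. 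Primitivity of $u_\rho$ then forces $c=1$, i.e., $\langle e_n,u_\rho\rangle = 1$.

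Now parametrize the edge from $v$ towards $w$ as $v+t\,e_n$; the intersection with the shifted hyperplane satisfies $t\langle e_n,u_\rho\rangle = 1$, so $t=1$, and the new vertex is $v+e_n$, a lattice point of $M$. To close the argument, I would note that $w = v + \ell e_n$ with $\ell\in\Z_{\geq 1}$, so $v+e_n$ genuinely lies on the edge $[v,w]$ (and coincides with $w$ when $\ell=1$). Running this over all pairs $(v,w)$ with $v\in F_\rho$ exhibits every new vertex of $\Delta(-F_\rho)$ as a lattice point.

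The only mildly subtle step is the identification of which edges at $v$ lie in $F_\rho$: one needs that a smooth vertex on a facet has exactly one edge leaving the facet, which follows immediately from the fact that the edge generators form a basis and that $F_\rho$, being a facet, cuts out a hyperplane in the tangent cone at $v$. Everything else is a direct computation in the local basis given by smoothness.
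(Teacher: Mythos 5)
Your argument is essentially the paper's: both translate a vertex $v$ of $F_\rho$ to the origin, invoke smoothness to get a $\Z$-basis of $M$ from the edge generators at $v$, observe that exactly one generator $e_n$ leaves $F_\rho$, and conclude that the shifted hyperplane passes through the lattice point $v+e_n$. You make the primitivity step ($u_\rho = e_n^\vee$, hence $\langle e_n, u_\rho\rangle = 1$) and the enumeration of new vertices of $\Delta(-F_\rho)$ more explicit than the paper's terse phrasing, but the underlying reasoning is the same.
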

\begin{proof}
We may assume that $\Delta$ is full dimensional. Let $v$ be a vertex of $F_\rho$ and assume $v=0$ after translation. By the smoothness assumption, the set of primitive generators $B = \settb{u_E}{v\in E~, E~\text{edge}}$ forms a basis for $M$. There is a subset of $n-1$ of these that forms a basis for (the necessarily full sub lattice) $\Span{F_\rho}\cap M$. The hyperplane obtained by shifting inward by one must contain the lattice point $u_E$; if not, there are lattice points that are not contained in the span of $B$, contradicting that $B$ is a basis.
\end{proof}

\begin{lemma} \label{bpfLemma}
    If $\Delta$ is smooth, then the divisor $D_\Delta-D_\rho$ is basepoint free for all $\rho\in \Sigma^\Delta(1)$.
\end{lemma}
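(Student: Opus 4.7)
The plan is to apply the standard criterion for basepoint freeness on a smooth projective toric variety: a torus-invariant Cartier divisor $D=\sum_{\rho'} a'_{\rho'} D_{\rho'}$ is basepoint free if and only if, for every maximal cone $\sigma\in\Sigma^\Delta$, the unique point $m_\sigma\in M$ satisfying $\langle m_\sigma,u_{\rho'}\rangle=-a'_{\rho'}$ for all $\rho'\in\sigma(1)$ lies in $P_D\cap M$. Uniqueness and integrality of $m_\sigma$ are immediate from the fact that, by smoothness of $\Delta$, $\{u_{\rho'}\}_{\rho'\in\sigma(1)}$ is a $\Z$-basis of $N$. Applied to $D=D_\Delta-D_\rho$, whose polytope is the lattice polytope $\Delta(-F_\rho)$ from the preceding lemma, the problem reduces to checking that each such $m_\sigma$ lies in $\Delta(-F_\rho)$.

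I would split the verification into two cases according to whether $\rho\in\sigma(1)$. If $\rho\notin\sigma(1)$, the equations defining $m_\sigma$ for $D_\Delta-D_\rho$ are identical to those defining the vertex $v_\sigma$ of $\Delta$ associated to $\sigma$, so $m_\sigma=v_\sigma$. The only new inequality cut out by $\Delta(-F_\rho)$ is $\langle v_\sigma,u_\rho\rangle\geq -a_\rho+1$. Since $\rho\notin\sigma(1)$ the vertex $v_\sigma$ does not lie on $F_\rho$, so $\langle v_\sigma,u_\rho\rangle>-a_\rho$; as $v_\sigma$ is a lattice point and $u_\rho$ is primitive, this strict inequality is an inequality of integers, giving $\langle v_\sigma,u_\rho\rangle\geq -a_\rho+1$, as required.

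The genuine content lies in the case $\rho\in\sigma(1)$. Writing $\sigma(1)=\{\rho=\rho_1,\rho_2,\dots,\rho_n\}$ and letting $\{u_{\rho_i}^\ast\}$ denote the dual $\Z$-basis of $M$, the unique $m_\sigma$ is $v_\sigma+u_{\rho_1}^\ast$. To see this lies in $\Delta$, I would identify $v_\sigma+u_{\rho_1}^\ast$ as a lattice point on the edge of $\Delta$ emanating from $v_\sigma$ dual to the wall spanned by $u_{\rho_2},\dots,u_{\rho_n}$: by smoothness this edge has primitive lattice direction $u_{\rho_1}^\ast$, and since it has positive integer length, the unit step $v_\sigma+u_{\rho_1}^\ast$ sits on that edge and hence in $\Delta$. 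Combined with the equality $\langle v_\sigma+u_{\rho_1}^\ast,u_\rho\rangle=-a_\rho+1$ built into the definition, this places $m_\sigma$ in $\Delta(-F_\rho)$. This edge argument is the main obstacle, and it is precisely where smoothness is essential: without it, $u_{\rho_1}^\ast$ need not be the primitive edge direction, and a single lattice step from $v_\sigma$ could leave $\Delta$ altogether.
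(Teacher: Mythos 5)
Your proof is correct, and it takes a genuinely different route from the paper. The paper's proof observes that $\Sigma^\Delta$ is a refinement of the normal fan of $\Delta(-F_\rho)$, so there is a toric morphism $\phi\colon \P(\Delta)\to\P(\Delta(-F_\rho))$; it then identifies $D_\Delta-D_\rho$ with $\phi^\ast D_{\Delta(-F_\rho)}$ (up to linear equivalence) and concludes basepoint freeness from the fact that $D_{\Delta(-F_\rho)}$ is ample. Your argument instead verifies the standard Cartier-data criterion directly: for each maximal cone $\sigma$ you exhibit the lattice point $m_\sigma$ and place it in $\Delta(-F_\rho)$ by hand, splitting on whether $\rho\in\sigma(1)$. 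The two are actually closer than they appear: the paper's refinement claim is equivalent (for complete toric varieties) to nefness of $D_\Delta-D_\rho$, and if one wants to justify it from scratch one ends up checking that the support function $h_{\Delta(-F_\rho)}$ is linear on each maximal cone $\sigma\in\Sigma^\Delta$, which reduces precisely to the fact that your $m_\sigma$ lies in $\Delta(-F_\rho)$. So your version is the more self-contained and explicit of the two, and it makes the role of smoothness completely transparent (the dual basis vector $u_{\rho_1}^\ast$ being the primitive edge direction out of $v_\sigma$). The paper's version is shorter and more conceptual, at the cost of leaning on the refinement claim and a citation for the pullback identity. One small bookkeeping point you could make explicit in Case 2: the lattice length of the edge through $v_\sigma$ is a positive integer because both endpoints are vertices of the lattice polytope $\Delta$ and the edge is nondegenerate, which is why the unit step $v_\sigma+u_{\rho_1}^\ast$ stays inside $\Delta$.
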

\begin{proof}
    Let $\Sigma^\Delta$ be the normal fan of $\Delta$ and $\Sigma^{\Delta(-F_\rho)}$ the normal fan of $\Delta(-F_\rho)$. Then $\Sigma^\Delta$ is a refinement of $\Sigma^{\Delta(-F_\rho)}$ so we have a birational morphism $\P(\Delta)\to \P(\Delta(-F_\rho))$ and by \cite[Prop. 6.2.7]{CLS11} we have
    \begin{equation*}
        \phi^\ast D_{\Delta(-F_\rho)} - E = D_\Delta - D_\rho
    \end{equation*}
for some $E$ with $\phi_\ast E = 0$. Since $P_{\phi^\ast D_{\Delta(-F_\rho)}}$ and $P_{D_\Delta-D_\rho}$ differ by a translation, we have $E\sim 0$, so $D_\Delta-D_\rho$ is the pullback of an ample divisor, hence basepoint free.
\end{proof}

Recall that the homogeneous coordinate ring $S$ of a toric variety $\P_k(\Delta)$ is the algebra
\begin{align*}
S = k[x_\rho~|~\rho\in \Sigma^\Delta(1)]
\end{align*}
graded by $\deg(x_\rho)=[D_\rho]\in \Cl(\P_k(\Delta))$ \cite{Cox95}. A homogeneous ideal in $S$ defined a closed subscheme of $\P_k(\Delta)$, in particular the ideal $(x_\rho)$ defines the closed subscheme $Z(x_\rho)=D_{\rho}$.

\begin{lemma} \label{smoothDegeneration}
    Let $X=\P_k(\Delta)$ be a smooth toric variety with homogeneous coordinate ring $S$. Let $f$ a Newton non-degenerate Laurent polynomial with Newton polytope $\Delta$ (an element of $S$ with degree $[D_\Delta]$). Let $s\in S$ be general with $\deg(s) = [D_\Delta - D_\rho]$. Then the $R$-scheme
    \begin{align*}
        \X = Z(ft-x_\rho s)\subset X\times_k R
    \end{align*}
    is strictly toroidal.
\end{lemma}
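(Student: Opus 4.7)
The plan is to verify strict toroidality by a local analysis of $\X$ along its special fiber, broken into cases depending on how many of the divisors $D_\rho$, $Z(s)$, $Z(f)$ pass through the point under consideration.

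Flatness of $\X$ over $R$ comes for free: the defining section $ft - x_\rho s$ reduces modulo $t$ to $-x_\rho s$, which is a nonzero section of $\Lb(\Delta)$, so $\X$ is a Cartier divisor in $X \times_k R$ and therefore flat over $R$. Its special fiber equals $D_\rho + Z(s)$ as a Weil divisor on $X$.

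Before the case analysis I would collect three geometric ingredients, valid for a general $s \in H^0(X, \OO_X(D_\Delta - D_\rho))$ via Bertini applied to the basepoint-free linear system supplied by \cref{bpfLemma}, combined with the Newton non-degeneracy of $f$:
\begin{enumerate}[label=(\roman*)]
\item $Z(s)$ is smooth;
\item $Z(s) \cap D_\rho$ is smooth (i.e., $Z(s)$ meets the smooth divisor $D_\rho$ transversally), using that the restriction of a basepoint-free system to $D_\rho$ is again basepoint free;
\item at every point of $Z(s) \cap Z(f) \cap D_\rho$ the three divisors $D_\rho$, $Z(f)$, $Z(s)$ are mutually transverse; here Newton non-degeneracy ensures that $Z(f) \cap D_\rho = Z(f_{F_\rho})$ is smooth inside $D_\rho$, and Bertini applied to the basepoint-free restriction of $\lvert D_\Delta - D_\rho \rvert$ to the smooth subvariety $Z(f) \cap D_\rho$ supplies the third transversal direction.
\end{enumerate}

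Given (i)-(iii), the structure of $\X$ at a closed point $p \in \X_k$ is determined by how many of the three divisors contain $p$. If $p$ lies in at most one of $D_\rho$, $Z(s)$, then one of $x_\rho$, $s$ is a local unit, and the equation $ft - x_\rho s = 0$ allows us to solve for the other coordinate, so $\X$ is smooth over $R$ near $p$ (strictly toroidal with $M = 0$). If $p \in D_\rho \cap Z(s)$ and $f(p) \neq 0$, then by (ii) we may choose étale local coordinates $(y_1, \ldots, y_n)$ on $X$ at $p$ with $y_1 = x_\rho$ and $y_2 = s$; the equation becomes $y_1 y_2 = u\, t$ for a local unit $u$, and after rescaling $y_2$ by $u^{-1}$ on a Zariski neighborhood, this is the standard semistable local model $y_1 y_2 = t$, smoothly covering $\Spec R[a,b]/(ab - t)$ (so $M = \N^2$, $m = e_1 + e_2$, $q = 1$). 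Finally, if $p \in D_\rho \cap Z(s) \cap Z(f)$, then (iii) gives coordinates with $y_1 = x_\rho$, $y_2 = s$, $y_3 = f$, and the equation becomes $y_1 y_2 = y_3 t$, the local model for the three-fold ordinary double point. This is itself strictly toroidal: take $M'$ to be the toric monoid of the cone over $\P^1 \times \P^1$ (so $k[M'] \cong k[y_1, y_2, y_3, y_4]/(y_1 y_2 - y_3 y_4)$), and set $m = e_4$, $q = 1$; then $k[M']/(\chi^{e_4}) \cong k[y_1, y_2, y_3]/(y_1 y_2)$ is reduced, and the projection from a neighborhood of $p$ in $\X$ onto this model (forgetting $y_4, \ldots, y_n$) is smooth.

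The main obstacle is item (iii): ensuring that at the triple intersection $D_\rho \cap Z(s) \cap Z(f)$ the three divisors remain in general position. This hinges on the restriction of $\lvert D_\Delta - D_\rho \rvert$ to the smooth subvariety $Z(f) \cap D_\rho$ being basepoint free so that Bertini applies, which is inherited from the global basepoint freeness of \cref{bpfLemma}. Once (iii) is established, the three cases above cover every point of $\X_k$ and yield the required étale-local strictly toroidal models, with the three-fold ordinary double point being the worst singular structure that appears.
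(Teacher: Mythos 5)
Your proof is correct, but it follows a genuinely different route from the paper. The paper's proof is short because it outsources the real work to the log-geometric criterion of \cref{logSubSchTheorem}: after observing (via Bertini and \cref{bpfLemma}) that $\X$ meets the strata of $X\times_k R$ smoothly and transversally, it realizes $\X$ on each torus-invariant chart as cut out by a regular sequence $(f-\chi^{e_1}, s-\chi^{e_2})$ inside a visibly strictly toroidal ambient scheme $\Spec R[N']/(t-\chi^{e_3})$, and concludes by the appendix result. Your proof instead carries out a direct closed-point case analysis of $\X_k$, exhibiting an explicit local model at each point: smooth over $R$ if $p$ lies on at most one of $D_\rho, Z(s)$; the semistable model $y_1y_2 = t$ on the double locus away from $Z(f)$; and the cone over $\P^1\times\P^1$ (with $m=e_4$, $q=1$) on the triple intersection, using transversality (iii) to choose the coordinates $y_1=x_\rho$, $y_2=s$, $y_3=f$. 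The transversality inputs (i)--(iii) are correctly justified by $X$ being smooth, Newton nondegeneracy of $f$, and Bertini applied to the basepoint-free system $\lvert D_\Delta - D_\rho\rvert$ and its restrictions, all of which descend from \cref{bpfLemma}. The trade-off is this: your approach is more elementary and self-contained (no log regularity needed), and it makes the geometry very concrete -- the worst singularity is an ODP. The paper's approach is less explicit here but reuses a uniform criterion that also handles the companion statement \cref{monomialDegenToroidal}, where a clean finite list of local models would be harder to write down.
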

\begin{proof}
The subscheme $Z(s)$ is a general element of the linear system of $\OO_X(D_\Delta-D_\rho)$ and $Z(x_\rho)$ is the torus invariant subscheme $D_\rho$.
It follows from Bertini's theorem and \cref{bpfLemma} that the strata of $\X$ are smooth of the correct dimension. Let $U=\Spec{R[(\sigma^\vee \cap M)]}$ be a torus-invariant open set of $X\times_k R$ corresponding to a cone $\sigma$. The scheme $Z(ft-x_\rho s)\cap U$ is defined by $f't-x_\rho^\prime s'$ where $s',f'$ and $x_\rho^\prime$ are regular functions on $U$, all of them a local generator for the ideal sheaf of the subscheme defined by their global counterpart (here we are using the smoothness of $X$). We have
\begin{equation*}
\X\cap U = \Spec{R[(\sigma^\vee \cap M)]/(ft-x_\rho s)}.
\end{equation*}
and this is the subscheme cut out by $(f-\chi^{e_1},s-\chi^{e_2})$ in the strictly toroidal $R$-scheme
\begin{equation*}
\Spec{R[(\sigma^\vee \cap M)\oplus \N^2]/(\chi^{e_1} t - x_\rho \chi^{e_2})} \simeq \Spec{R[N']/(t-\chi^{e_3})}.
\end{equation*}
Here $N'$ is the monoid $((\sigma^\vee \cap M)\oplus \N^3)/\sim$ where $e_1 + e_3 \sim e_2 + e_\rho$ and $x_\rho = \chi^{e_\rho}$. The claim now follows from \cref{logSubSchTheorem}.
\end{proof}

\begin{lemma}\label{monomialLemma}
    Let $\P(\Delta)$ a toric variety, $D=\sum_\rho a_\rho D_\rho$ a Weil divisor, and $s$ a global section of $\OO_{\P(\Delta)}(D)$. Let $Z(s)$ denote the effective divisor linearly equivalent to $D$ induced by $s$. Let $H_{\rho_0}$ be a face of $P_D$ and $\rho_0$ the ray dual to it. Suppose 
    \begin{align*}
        Z(s)=\sum_{\rho\in \Sigma^\Delta(1)} b_\rho D_\rho.
    \end{align*}
    with $b_{\rho_0}\neq 0$. Then $s$ corresponds to a lattice point in $P_D \setminus H_{\rho_0}$.
\end{lemma}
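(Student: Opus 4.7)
The plan is as follows. The hypothesis $Z(s)=\sum_\rho b_\rho D_\rho$ says that the zero locus of $s$ is entirely torus-invariant, so $Z(s)$ does not meet the dense torus $T=\Spec k[M]\subset \P(\Delta)$. Upon trivializing $\OO_{\P(\Delta)}(D)|_T\simeq \OO_T$, the restriction $s|_T$ is therefore a nowhere-vanishing regular function on $T$. Every unit of the Laurent polynomial ring $k[M]$ is a scalar multiple of a character, so $s|_T=c\,\chi^m$ for some $c\in k^\times$ and some $m\in M$. This is the essential step: torus-invariance of the divisor $Z(s)$ forces $s$ itself to be (up to scalar) a torus-invariant section.

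The next step is to promote this local description to a global statement. Recall that the torus-invariant section of $\OO_{\P(\Delta)}(D)$ associated to a lattice point $m'\in P_D\cap M$ --- call it $s_{m'}$ --- has divisor $\sum_\rho (\langle m',u_\rho\rangle + a_\rho)\,D_\rho$. Since $s$ and $c\,s_m$ agree on the dense open $T$ and are sections of the same line bundle, they agree globally: $s=c\cdot s_m$. For this to be a bona fide global section (equivalently, for $Z(s)$ to be effective), one needs $\langle m,u_\rho\rangle + a_\rho\geq 0$ for every $\rho$, which is exactly the condition $m\in P_D$.

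Comparing divisors then yields $b_\rho = \langle m,u_\rho\rangle + a_\rho$ for every ray $\rho\in \Sigma^\Delta(1)$. The face $H_{\rho_0}\subset P_D$ dual to $\rho_0$ is precisely $\settb{m'\in P_D}{\langle m',u_{\rho_0}\rangle + a_{\rho_0}=0}$, so $m\in H_{\rho_0}$ if and only if $b_{\rho_0}=0$. The contrapositive gives the conclusion: if $b_{\rho_0}\neq 0$, then the lattice point $m\in P_D$ corresponding to $s$ lies in $P_D\setminus H_{\rho_0}$. The only genuine obstacle is the first paragraph --- invoking that units on the torus are characters to upgrade a seemingly arbitrary section $s$ into a monomial one; once that is in hand, the rest is a purely formal translation between the divisor and the defining inequalities of $P_D$.
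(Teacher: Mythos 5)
Your proof is correct, and it is essentially the same computation as the paper's, with one genuinely worthwhile addition. The paper's proof simply argues by contraposition: assuming $s$ corresponds to some lattice point $m_0\in H_{\rho_0}$, it computes $\operatorname{div}(s)=\operatorname{div}(\chi^{m_0})+D$ and reads off $b_{\rho_0}=\langle m_0,u_{\rho_0}\rangle+a_{\rho_0}=0$. What the paper leaves unstated is that the hypothesis $Z(s)=\sum_\rho b_\rho D_\rho$ forces $s$ to be a monomial in the first place; the published proof only rules out the possibility that $s$ corresponds to a lattice point \emph{in} $H_{\rho_0}$, not the possibility that it corresponds to no single lattice point. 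Your opening paragraph supplies exactly that missing piece: since $Z(s)$ is supported on the toric boundary, $s$ restricts to a unit on the dense torus, and units in $k[M]$ are scalar multiples of characters, so $s=c\,\chi^m$ for a unique $m\in M$ which the effectivity of $Z(s)$ then forces to lie in $P_D$. After that, the identification $b_\rho=\langle m,u_\rho\rangle+a_\rho$ and the defining inequality of $H_{\rho_0}$ give the claim, exactly as in the paper. In the paper's applications $s$ is always an explicit monomial $x_I$, so the omission is harmless there, but your version is the honest proof of the statement as written.
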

\begin{proof}
    The polyhedron $P_D$ is defined by the intersection of halfspaces
    \begin{align*}
        \settb{m\in M_\R}{(m,u_\rho)+a_\rho \geq 0}
    \end{align*}
    where $u_\rho$ is the primitive generator of the ray $\rho$. Suppose $s$ corresponds to a lattice point $m_0\in H_{\rho_0}$, then $(m_0,u_{\rho_0})+a_{\rho_0}=0$. Moreover,
    \begin{align*}
    \operatorname{div}(s)=& \operatorname{div}(\chi^{m_0}) + D\\ 
    =&\sum_{\rho\in \Sigma^\Delta(1)} ((m_0,u_\rho) + a_\rho)D_\rho
    \end{align*}
    so $b_{\rho_0}=(m_0,u_{\rho_0})+a_{\rho_0}=0$ and the claim follows.
\end{proof}

\begin{proposition} \label{smoothDegenThm}
    Let $\delta$ be a smooth lattice polytope with facets $F_\rho, \rho\in \Sigma^\delta(1)$ such that
    \begin{enumerate}
        \item $\delta$ is not stably rational, and
        \item the polytopes $\delta(-F_\rho)$ admits an unobstructed subdivision for every $\rho\in \Sigma^\delta(1)$.
    \end{enumerate}
    Then $\delta$ has strong variation of stable birational type.
\end{proposition}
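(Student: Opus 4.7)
The plan is to apply \cref{fieldReductionProp}: given $\alpha\subset M_\R$ with $\alpha\cap\delta^\circ=\emptyset$, it suffices to exhibit over the Puiseux series field $K$ a single Newton-nondegenerate Laurent polynomial $g$ with support $\delta\cup\alpha$ for which the images $\Volsb\sbb{\Zo(g_\delta)}$ and $\Volsb\sbb{\Zo(g_\alpha)}$ in $\SBB{k}$ differ; since $\Volsb$ is a well-defined ring map, this forces $\sbb{\Zo(g_\delta)}\ne\sbb{\Zo(g_\alpha)}$ in $\SBB{K}$, establishing variation for the pair $(\delta,\alpha)$.

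Because $\alpha\cap\delta^\circ=\emptyset$, the convex set $\alpha\cap\delta\subset\partial\delta$ lies in a single face of $\delta$ (any two points of a convex subset of $\partial\delta$ with segment not in a common face would force interior points), hence in some facet $F_\rho$. First I choose very general elements of $k$ for all coefficients of $g$ supported on $\alpha\cap M$. Then $g_\alpha\in k[M]$ is Newton-nondegenerate with Newton polytope $\alpha$, and via the trivial strictly toroidal model $\Zo(g_\alpha)_k\times_kR$ together with \cref{motivicVolumeThm} we have $\Volsb\sbb{\Zo(g_\alpha)}=\sbb{\Zo(g_\alpha)_k}$.

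For $g_\delta$ I invoke \cref{smoothDegeneration}. Take a very general $k$-section $s$ of $\OO_{\P(\delta)}(D_\delta-D_\rho)$ (basepoint-free by \cref{bpfLemma}) and a very general $g_0\in k[M]$ with Newton polytope $\delta$ agreeing with $g$ on $F_\rho\cap M$. Set $g_\delta:=g_0-t^{-1}s$ as a Laurent polynomial over $K$ (with Newton polytope $\delta$, since $s$ is supported on $\delta(-F_\rho)\subset\delta$). Then $\X:=Z(g_0t-x_\rho s)\subset\P_k(\delta)\times_kR$ is a strictly toroidal model of $\Zo(g_\delta)$, and the special fiber decomposition $\X_k=D_\rho\cup Z(s)$ gives
\[
\Volsb\sbb{\Zo(g_\delta)} \;=\; \sbb{D_\rho}+\sbb{Z(s)}-\sbb{D_\rho\cap Z(s)} \;=\; \sbb{Z(s)},
\]
after observing that $D_\rho$ (toric) and $D_\rho\cap Z(s)$ (a generic hypersurface on the smooth toric $D_\rho$ whose Newton polytope is a translate of the smooth facet $F_\rho$, and which inherits an unobstructed subdivision by restriction from that of $\delta(-F_\rho)$) are stably rational---the latter by an induction on dimension on the statement being proved.

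The final step, and the principal obstacle, is to verify $\sbb{Z(s)}\ne\sbb{\Zo(g_\alpha)_k}$ in $\SBB{k}$. By hypothesis (2) and the dichotomy of \cref{variationExample}, the polytope $\delta(-F_\rho)$ either has variation of stable birational type or is stably rational. In the variation case $\sbb{Z(s)}$ is non-constant in $s$, and since $s$ is independent of $g_\alpha$, I pick $s$ to avoid the fixed class $\sbb{\Zo(g_\alpha)_k}$. The delicate subcase is when $\delta(-F_\rho)$ is stably rational for every ray $\rho$: then $\sbb{Z(s)}=\sbb{\Spec{k}}$ regardless of $\rho$, and ruling out $\sbb{\Zo(g_\alpha)_k}=\sbb{\Spec{k}}$ must be done jointly with hypothesis (1) that $\delta$ itself is not stably rational, by combining the motivic identities obtained from \cref{smoothDegeneration} over several facets to reconstruct the non-trivial birational information of $\delta$. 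The hard part will be this combinatorial step, where the interaction of the smoothness of $\delta$, the structure of $\partial\delta$, and the position of $\alpha$ relative to the facets must be exploited to produce the required inequality.
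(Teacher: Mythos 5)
Your overall strategy is the same as the paper's: degenerate along $f = gt - x_\rho s$, invoke \cref{smoothDegeneration} for strict toroidality, and compare motivic volumes. Two points need correcting.

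First, the detour through stable rationality of $D_\rho\cap Z(s)$ is both unnecessary and unsupported. You claim it ``inherits an unobstructed subdivision by restriction'' and is stably rational ``by an induction on dimension on the statement being proved'' --- but the statement you are proving is about strong variation, not stable rationality, and an unobstructed subdivision does not imply stable rationality. The paper does not need any such claim. Instead, it exploits the fact that $\SBB{k}$ is a \emph{free} abelian group: from
\[
\sbb{\Zo(g_\alpha)} \;=\; \sbb{\Spec k} + \sbb{Z(s)} - \sbb{Z(s)\cap D_\rho}
\]
with $\Zo(g_\alpha)$ irreducible and all summands effective, comparing the two sides as multisets of basis elements forces $\sbb{\Zo(g_\alpha)}$ to be $\sbb{\Spec k}$ or $\sbb{Z(s)}$. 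No information about $Z(s)\cap D_\rho$ is required.

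Second, and this is the genuine gap, you stall on the case where the right-hand side would only rule out $\sbb{\Zo(g_\alpha)}=\sbb{Z(s)}$ and you still need to exclude $\sbb{\Zo(g_\alpha)}=\sbb{\Spec k}$, proposing a ``hard combinatorial step'' combining identities over several facets. There is no such step. If $\alpha$ is stably rational (equivalently $\sbb{\Zo(g_\alpha)}=\sbb{\Spec k}$), then hypothesis~(1) alone finishes the proof: $\delta$ is not stably rational, so $\sbb{\Zo(g_\delta)}\neq\sbb{\Spec k}=\sbb{\Zo(g_\alpha)}$, and the pair $(\delta,\alpha)$ has variation without any degeneration at all. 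The correct structure is to dispose of this case at the outset, assume thereafter that $\alpha$ is not stably rational, and then the free-group argument leaves only $\sbb{\Zo(g_\alpha)}=\sbb{Z(s)}$ to exclude, which hypothesis~(2) handles by the variation argument you sketched (if $\delta(-F_\rho)$ is stably rational then $\sbb{Z(s)}=\sbb{\Spec k}\neq\sbb{\Zo(g_\alpha)}$; otherwise the unobstructed subdivision gives variation and a very general $s$ avoids the fixed class $\sbb{\Zo(g_\alpha)}$).
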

\begin{proof}
Let $\alpha$ be any lattice polytope such that $\alpha\cap \delta^\circ = \emptyset$ and $\Delta$ a lattice polytope containing $\alpha\cup\delta$. Furthermore, let $g$ be a very general Laurent polynomial with Newton polytope $\Delta$.

If $\alpha$ is stably rational, then we are done, so we may assume that $\alpha$ is not stably rational. Let $\OO(D_\delta)$ be the ample line bundle on $\P(\delta)$.
Let $F_{\rho}$ denote a facet of $\delta$ such that $\alpha\cap \delta \subset F_{\rho}$. By \cref{fieldReductionProp} it suffices to construct a Laurent polynomial $f$ such that $f_\alpha$ and $f_\delta$ are Newton non-degenerate with Newton polytopes $\alpha$ and $\delta$, respectively, such that $\Zo(f_\delta)$ is not stably birational to $\Zo(f_\alpha)$. 

Let $S$ denote the homogeneous coordinate ring of $\P_k(\delta)$. Let $s$ be general with $\deg(s) = [D_\delta - D_\rho]$ and $g_\delta$ be general with $\deg(g_\delta) = [D_\delta]$. 
Define
\begin{equation*}
f_\delta = g_\delta t - x_\rho s \in S\otimes_k K.
\end{equation*}
There is an isomorphism between the vector space of homogeneous polynomials in $S$ of degree $D_\Delta$ and $H^0(\P_k(\delta), \OO(D_\Delta))$ \cite[Prop 1.1]{Cox95}. Using this, we can identify $f_\delta$ with a Laurent polynomial $f'_\delta$ such that $Z(f_\delta)=Z(f'_\delta)$. We denote both by $f_\delta$ for ease of notation. We let $g$ denote a very general Laurent polynomial with Newton polytope $\Delta$ such that the restriction to $\delta$ is indeed the $g_\delta$ constructed above. Now $f=gt-x_\rho s$ is a Newton nondegenerate Laurent polynomial with Newton polytope $\Delta$. It suffices to show that $\Vol(Z(f_\alpha))\neq \Vol(Z(f_\delta))$. By \cref{monomialLemma} we have
\begin{equation*}
f_\alpha = g_\alpha t, \quad f_\delta = g_\delta t - x_\rho s
\end{equation*}
and since $g$ and $s$ are general, these (and also $f$ itself) are Newton non-degenerate with the correct Newton polytopes. By \cref{smoothDegeneration} the closure $Z(f_\delta)\subset X\times_k R$ is strictly toroidal, so
\begin{align*}
\Volsb(Z(f_\delta)) &= \sbb{\Spec(k)} + \sbb{Z(s)} - \sbb{Z(s)\cap Z(x_\rho)} \\
\Volsb(Z(f_\alpha)) &= \sbb{Z(g_\alpha)}.
\end{align*}
Thus it suffices to show 
\begin{align*}
\sbb{\Zo(g_\alpha)} \neq \sbb{\Spec(k)} + \sbb{Z(s)} - \sbb{Z(s)\cap Z(x_\rho)}.
\end{align*}
Equality is only possible if $\sbb{\Zo(g_\alpha)}=\sbb{Z(s)}$. Here $\sbb{Z(s)}=\sbb{\Zo(h)}$ where $h$ is a very general Laurent polynomial with Newton polytope $\delta(-F_\rho)$, in particular, this no longer depends on $g_\alpha$. Since $\delta(-F_\rho)$ admits an unobstructed subdivision, it follows that $\sbb{\Zo(g_\alpha)} \neq \sbb{\Zo(h)}$.
\end{proof}

\begin{theorem} \label{smoothContainment}
Let $\Delta$ be a lattice polytope and $\delta\subset \Delta$ a smooth lattice polytope 
 with $\dim\delta=\dim\Delta$ such that
\begin{enumerate}
        \item $\delta$ is not stably rational, and
        \item $\delta(-F_\rho)$ admits an unobstructed subdivision for every $\rho\in\Sigma^{\delta}(1)$.
    \end{enumerate}
    Then $\Delta$ is not stably rational.
\end{theorem}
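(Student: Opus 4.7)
The plan is to construct a Newton non-degenerate Laurent polynomial $f\in K[M]$ with Newton polytope $\Delta$ together with a strictly toroidal $R$-model $\X$ of $\Zo(f)$ such that $\Volsb(\X_K)\neq\sbb{\Spec k}$; by \cref{fieldReductionProp} this will force $\Delta$ to be non-stably-rational. The hypotheses of \cref{smoothContainment} coincide with those of \cref{smoothDegenThm}, so $\delta$ has strong variation of stable birational type, and the construction will directly recycle the degeneration underlying \cref{smoothDegeneration}.

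Concretely, fix any ray $\rho\in\Sigma^\delta(1)$ with corresponding facet $F_\rho\subset\delta$, take a very general $g\in k[M]$ with Newton polytope $\Delta$ and a general section $s\in H^0(\P_k(\delta),\OO(D_\delta-D_\rho))$. Via the Cox isomorphism $x_\rho s$ corresponds to a Laurent polynomial $h\in k[M]$ with support in $\delta\setminus F_\rho$; set
\begin{equation*}
f = gt - h \in K[M],
\end{equation*}
which has Newton polytope $\Delta$. Take $\X$ to be the closure of $\Zo(f)$ inside $X\times_k R$, where $X\to\P_k(\Delta)$ is a toric resolution whose fan is a common smooth refinement of $\Sigma^\Delta$ and $\Sigma^\delta$ (such a refinement exists because $\dim\delta=\dim\Delta$ forces both fans onto the same lattice). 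Chart-by-chart, \cref{smoothDegeneration} together with \cref{logSubSchTheorem} makes $\X$ strictly toroidal.

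The special fibre $\X_k$ is the closure of $\Zo(h)$ in $X_k$; on the open part where $X$ agrees with $\P_k(\delta)$ this is $D_\rho\cup Z(s)$, and the extra torus-invariant strata contributed by the resolution are all stably rational because $g$ is very general. Plugging into \cref{motivicVolumeThm} and watching the toric contributions cancel in the alternating sum gives
\begin{equation*}
\Volsb(\X_K) = \sbb{\Spec k} + \sbb{Z(s)} - \sbb{Z(s)\cap D_\rho}.
\end{equation*}
Equality with $\sbb{\Spec k}$ would force $\sbb{Z(s)}=\sbb{Z(s)\cap D_\rho}$, i.e.\ a very general hypersurface with Newton polytope $\delta(-F_\rho)$ would be stably birational to its restriction to $D_\rho$; but hypothesis (2), combined with \cite[Theorem~4.1]{NO20a}, precludes this exactly as in the concluding step of the proof of \cref{smoothDegenThm}. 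Hence $\Volsb(\X_K)\neq \sbb{\Spec k}$ and $\Delta$ is not stably rational.

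The main obstacle is the strict-toroidality verification when $\P_k(\Delta)$ itself is singular: one must build a resolution that simultaneously refines $\Sigma^\Delta$ and $\Sigma^\delta$ so that the chart-wise argument of \cref{smoothDegeneration} applies, and then check that all extra toric components introduced are stably rational and cancel out in the motivic volume. Both points are ultimately controlled by the smoothness of $\delta$ and by the hypothesis $\dim\delta=\dim\Delta$.
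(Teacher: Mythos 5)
Your approach diverges substantially from the paper's and, as written, has real gaps. The paper does not attempt to build a single strictly toroidal model on a toric resolution of $\P_k(\Delta)$. Instead it constructs the regular subdivision $\PP$ of $\Delta$ induced by the lower convex envelope of $x\mapsto\min_{p\in\delta}|x-p|^2$, observes that $\delta\in\PP$ and is full-dimensional (so $\delta\not\subset\partial\Delta$), uses \cref{smoothDegenThm} to conclude that $\sbb{\Zo(g_\delta)}$ differs from $\sbb{\Zo(g_\alpha)}$ for every other cell $\alpha\in\PP$, and then feeds the resulting non-cancellation into \cref{tropicalVolume}. All the strict-toroidality and $\Volsb$ bookkeeping is thus absorbed into \cref{tropicalVolume}, which has already been established.

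Your plan instead manufactures a new degeneration $\X=\overline{\Zo(gt-h)}\subset X\times_kR$ over a common smooth refinement $X$ of $\Sigma^\Delta$ and $\Sigma^\delta$. Several of the steps do not follow from the cited results. First, \cref{smoothDegeneration} is a statement about $X=\P_k(\delta)$ with $\delta$ smooth: its chart-by-chart toroidality argument uses that $f$, $s$, $x_\rho$ are all local equations for their zero divisors on a smooth toric variety, and the Bertini/base-point-freeness input of \cref{bpfLemma}. None of this is automatic for a resolution of $\P_k(\Delta)$, where the pullback of $D_\delta-D_\rho$ to $X$ is not one of the lemmas you have available, and where $h$ is only a section whose Newton polytope is properly contained in $\Delta$, so that $Z(h)\subset X$ acquires toric boundary components with multiplicities that your sketch does not analyse. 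Second, the asserted equality $\Volsb(\X_K)=\sbb{\Spec k}+\sbb{Z(s)}-\sbb{Z(s)\cap D_\rho}$ is stated without justification; the extra toric strata coming from the resolution and from the boundary contributions of $h$ do not obviously ``cancel in the alternating sum'', since that alternating sum is the content of \cref{motivicVolumeThm} and depends on the full stratification of the (unverified) strictly toroidal model. Third, your last step tries to rule out $\sbb{Z(s)}=\sbb{Z(s)\cap D_\rho}$; but in \cref{smoothDegenThm} the concluding argument compares $\sbb{Z(s)}$ with $\sbb{\Zo(g_\alpha)}$, where $g_\alpha$ is \emph{independent} of $s$, so that variation of $\delta(-F_\rho)$ can be invoked. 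Here both $Z(s)$ and $Z(s)\cap D_\rho$ move with $s$, and no argument is given for why they cannot be stably birational for every choice. The paper avoids all of these issues by not constructing a global model at all and instead using \cref{tropicalVolume} together with the already-proved strong variation of $\delta$.
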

\begin{proof}
Let $g$ be a very general Laurent polynomial with Newton polytope $\Delta$. Consider the regular subdivision $\PP$ given by the lower convex envelope of
\begin{align*}
    \phi \colon \Delta\cap M \to \R,~~x \mapsto \min\settb{|x-p|^2}{p\in \delta}.
\end{align*}
Since $\phi=0$ on $\delta$ and non-zero everywhere else, we have $\delta\in \PP$. By \cref{smoothDegenThm} we have $\sbb{\Zo(g_\delta)}\neq \sbb{\Zo(g_\alpha)}$ for $\alpha\in \PP\setminus \set{\delta}$. In particular
\begin{equation*}
(-1)^{n+1}\sum_{\substack{\alpha\in\PP \\ \alpha\centernot\subset \partial\Delta}} (-1)^{\dim{\alpha}}\sbb{\Zo(g_\alpha)} \neq \sbb{\Spec{k}}
\end{equation*}
so $\Delta$ is not stably rational.
\end{proof}

\subsection{Monomial Degenerations}

When $\delta$ is not smooth, the degenerations constructed above will typically not be strictly toroidal. However, if we impose some conditions on the torus invariant sections of the ample line bundle, we can obtain the same result for a large class of (possibly singular) polytopes.

\begin{lemma} \label{monomialDegenToroidal}
    Let $X=\P_k(\Delta)$ be a toric variety, $x_I= \prod_{\rho\in I} x_\rho,~I\subset \Sigma^{\Delta}(1)$ be a reduced monomial in the homogeneous coordinate ring of $X$ of degree $\deg(x_I)=[D_\Delta]$ where $\OO_X(D_\Delta)=\Lb(\Delta)$ (cf. \cref{smoothDegeneration}). Similarly, let $g$ be general of degree $[D_\Delta]$. Then
    \begin{equation*}
        \X = Z(gt-x_I)\subset X \times_k R
    \end{equation*}
    is a strictly toroidal $R$-scheme.
\end{lemma}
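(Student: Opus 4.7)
My plan is to adapt the proof of \cref{smoothDegeneration}, localizing on torus-invariant affine charts of $X\times_k R$. Fix a maximal cone $\sigma$ of the normal fan $\Sigma^\Delta$ with corresponding vertex $m_\sigma\in\Delta$, giving the chart $U_\sigma\times_k R=\Spec R[\sigma^\vee\cap M]$. Because $x_I$ is a single monomial of degree $[D_\Delta]$ in the homogeneous coordinate ring of $X$, Cox's identification sends it to a single character in $H^0(X,\Lb(\Delta))$, which after trivializing $\Lb(\Delta)$ on $U_\sigma$ by the vertex $m_\sigma$ restricts to a monomial $\chi^m\in k[\sigma^\vee\cap M]$. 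Meanwhile $g$ restricts to a regular function $g_\sigma$, and locally
\begin{equation*}
\X\cap(U_\sigma\times R)=\Spec R[\sigma^\vee\cap M]/(g_\sigma t-\chi^m).
\end{equation*}

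Next, I would embed this local model in a strictly toroidal ambient scheme by adjoining monoid generators $e_1,e_3$ subject to the relation $e_1+e_3\sim m$, and setting
\begin{equation*}
N'=\bigl((\sigma^\vee\cap M)\oplus\N e_1\oplus\N e_3\bigr)/(e_1+e_3\sim m),\qquad Y=\Spec R[N']/(t-\chi^{e_3}).
\end{equation*}
Combining $\chi^{e_1}\chi^{e_3}=\chi^m$ with $t=\chi^{e_3}$ recovers $\chi^{e_1}t=\chi^m$, so $Y$ is isomorphic to $\Spec R[(\sigma^\vee\cap M)\oplus\N e_1]/(\chi^{e_1}t-\chi^m)$ and contains $\X\cap(U_\sigma\times R)$ as the closed subscheme cut out by $\chi^{e_1}-g_\sigma$. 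To see $Y$ is strictly toroidal I would identify $N'$ with the monoid of lattice points in the image of $\sigma^\vee\times\R_{\geq 0}^2$ in the quotient lattice $(M\oplus\Z^2)/\langle(m,-1,-1)\rangle$; for a maximal cone $\sigma$ this image is strongly convex rational polyhedral, so $N'$ is toric, and the reducedness of $k[N']/(\chi^{e_3})$ follows as in \cref{smoothDegeneration}.

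Finally, $\X\cap(U_\sigma\times R)$ sits in the strictly toroidal scheme $Y$ as the vanishing locus of the single regular function $g_\sigma-\chi^{e_1}$. As at the end of the proof of \cref{smoothDegeneration}, \cref{logSubSchTheorem} will then produce strict toroidality of $\X\cap(U_\sigma\times R)$, and gluing over the torus-invariant cover proves the lemma. The main obstacle, absent in the smooth case, is verifying the transversality hypothesis of \cref{logSubSchTheorem} on a possibly singular $X$: since \cref{bpfLemma} used smoothness to produce basepoint-freeness of $\Lb(\Delta)-D_\rho$, without that input I expect to need a more delicate Bertini argument inside each chart $Y$ that exploits the generality of $g$ in $|D_\Delta|$ and its Newton nondegeneracy with respect to $\Delta$ to ensure that $g_\sigma-\chi^{e_1}$ meets each toric stratum of $\bigcup_{\rho\in I}D_\rho$ log-transversely.
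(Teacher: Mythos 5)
Your proof takes essentially the same route as the paper's: localize on torus-invariant charts $U_\sigma\times R$, write the local equation as $g_\sigma t-\chi^m$ where $\chi^m$ locally generates the ideal of $Z(x_I)$ (using that $D_\Delta$ is Cartier and $Z(x_I)$ is torus-invariant), embed in the auxiliary toroidal model $\Spec R[N']/(t-\chi^{e_3})$ with $e_1+e_3\sim m$, realize $\X$ as cut out by $g_\sigma-\chi^{e_1}$, and invoke \cref{logSubSchTheorem}.

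Your closing worry, however, is misplaced and should be excised rather than worked around. In \cref{smoothDegeneration} the auxiliary section $s$ had degree $[D_\Delta-D_\rho]$, so one needed base point freeness of $\OO_X(D_\Delta-D_\rho)$, which is where smoothness entered via \cref{bpfLemma}. Here there is no such section: $x_I$ is a single \emph{fixed} torus-invariant monomial, and the only moving part of the degeneration is $g$, which is a general member of the complete linear system $\lvert D_\Delta\rvert$. That linear system is base point free on \emph{any} complete toric variety, because $D_\Delta$ is ample (and ample torus-invariant Cartier divisors on complete toric varieties are globally generated); smoothness of $X$ plays no role. Hence Bertini applies verbatim to $Z(g)$ and gives the required regular, dimensionally transverse intersections with the logarithmic strata of $Y$, so the hypothesis of \cref{logSubSchTheorem} is satisfied and no ``more delicate Bertini argument'' is needed.
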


\begin{proof}
By restricting to a torus invariant open set corresponding to a cone $\sigma$, we may assume that $\X$ is cut out by a regular function $gt-\chi^{m}$ on $U=\Spec R[\sigma^\vee \cap M]$ where $\chi^m$ is a generator for the ideal sheaf of $Z(x_I)\cap U$ (this is possible since $D_\Delta$ is Cartier and $Z(x_I)$ is torus invariant). The scheme $Z(g)$ is a general member of the linear system of $\OO_X(D_\Delta)$, which is base point free. This means that the scheme $\X\cap U$ is cut out by  $\chi^{(0,1,0)}-g$ in the strictly toroidal $R$-scheme 
\begin{equation*}
\Spec{R[N\oplus \N\oplus \N/\sim]/(\chi^{(0,0,1)}-t)}
\end{equation*}
where $(0,1,1)\sim (m,0,0)$.
The claim now follows from \cref{logSubSchTheorem}.
\end{proof}
Let $\Delta$ be a lattice polytope and $\P(\Delta)$ the toric variety. Consider the following assumption on $\Delta$.
\begin{definition} \label{conditionMDef}
Suppose that for every $\rho\in \Sigma^{\Delta}(1)$ there is a monomial $m_\rho$ in the homogeneous coordinate ring of $\P(\Delta)$ such that $\deg(m_\rho)=[D_\Delta]$ and $D_\rho\subset Z(m_\rho)$. Then we say that $\Delta$ (or $\P(\Delta)$) satisfies the condition $(M)$.
\end{definition}
This condition will ensure enough space in the linear systems to construct degenerations that imply strong variation of stable birational types.
\begin{theorem} \label{monomialDegenThm}
    Let $\delta$ be a non-stably rational lattice polytope that satisfies condition $(M)$.
    Then $\delta$ has strong variation of stable birational types.
\end{theorem}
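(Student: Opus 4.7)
The strategy is to adapt the proof of \cref{smoothDegenThm}, replacing the section $s$ of $\OO(D_\delta-D_\rho)$ (which required smoothness of $\delta$ to even be defined) with the monomial $m_\rho$ guaranteed by condition $(M)$, and using \cref{monomialDegenToroidal} in place of \cref{smoothDegeneration}. After this substitution the special fiber of the degeneration becomes a union of torus-invariant divisors, so its motivic volume lies in the subgroup $\Z\cdot\sbb{\Spec k}\subset\SBB{k}$, whereas the trivial $f_\alpha$-degeneration contributes the genuinely non-trivial class $\sbb{\Zo(g_\alpha)}$.

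Fix a lattice polytope $\alpha\subset M_\R$ with $\alpha\cap\delta^\circ=\emptyset$; we must show the pair $(\delta,\alpha)$ has variation. We may assume that $\alpha$ is not stably rational, since otherwise the conclusion is trivial. The intersection $\alpha\cap\delta$ is a convex subset of $\partial\delta$ and is therefore contained in some facet $F_\rho$ of $\delta$. Condition $(M)$ supplies a monomial $m_\rho$ in the homogeneous coordinate ring of $\P_k(\delta)$ with $\deg m_\rho=[D_\delta]$ and $D_\rho\subset Z(m_\rho)$; by \cref{monomialLemma} the lattice point of $\delta\cap M$ corresponding to $m_\rho$ lies in $\delta\setminus F_\rho$, and in particular not in $\alpha$. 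Setting $\Delta=\Conv(\alpha\cup\delta)$ and letting $g$ be a very general Laurent polynomial with Newton polytope $\Delta$, form
\[
f = gt - m_\rho.
\]
Then $f_\alpha=g_\alpha t$ (since $m_\rho$ contributes no monomial in $\alpha$) while $f_\delta=g_\delta t - m_\rho$.

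By \cref{monomialDegenToroidal}, the closure $Z(f_\delta)\subset\P_k(\delta)\times_k R$ is strictly toroidal, with special fiber $Z(m_\rho)$; every stratum of $Z(m_\rho)$ is a closed torus orbit in $\P_k(\delta)$, hence stably rational. Consequently
\[
\Volsb(Z(f_\delta)_K) = c\cdot\sbb{\Spec k}
\]
for some integer $c$ determined by the dual complex of the special fiber, whereas the trivial degeneration of $\Zo(f_\alpha)$ gives $\Volsb(\Zo(f_\alpha)_K)=\sbb{\Zo(g_\alpha)}$. Since $\SBB{k}$ is freely generated by stable birational classes and $\Zo(g_\alpha)$ is not stably rational, $\sbb{\Zo(g_\alpha)}\neq c\cdot\sbb{\Spec k}$; hence $\sbb{Z(f_\delta)_K}\neq\sbb{\Zo(f_\alpha)_K}$ in $\SBB{K}$, and \cref{fieldReductionProp} propagates this inequality to very general Laurent polynomials over any algebraically closed field of characteristic $0$.

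The main technical care lies in matching the hypotheses of \cref{monomialDegenToroidal}, which is stated for squarefree monomials $x_I=\prod_{\rho'\in I}x_{\rho'}$: one must check that the monomial produced by condition $(M)$ can be chosen squarefree (by passing to its radical and keeping track of degrees) or else extend \cref{monomialDegenToroidal} to arbitrary monomials via a logarithmic resolution argument. Once this is in place, the proof is a direct monomial analogue of \cref{smoothDegenThm}.
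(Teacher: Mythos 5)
Your proof follows the same route as the paper's: identify a facet $F_\rho$ of $\delta$ containing $\alpha\cap\delta$, use condition $(M)$ to produce a monomial vanishing on $D_\rho$, degenerate $f_\delta$ to that monomial via \cref{monomialDegenToroidal}, and compare motivic volumes via \cref{fieldReductionProp}. Your use of \cref{monomialLemma} to show the lattice point of $m_\rho$ lies outside $F_\rho$ (hence outside $\alpha$), so that $f_\alpha = g_\alpha t$, is exactly the paper's argument, and your observation that $\Volsb(Z(f_\delta)_K)$ lies in $\Z\cdot\sbb{\Spec k}$ while $\Volsb(\Zo(f_\alpha)_K) = \sbb{\Zo(g_\alpha)}$ is a free generator is the same conclusion (the paper computes the integer $c$ explicitly, but as you note this is not needed).

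The technical point you flag about squarefreeness is genuine and worth spelling out. The paper's \cref{conditionMDef} as literally stated does not say the monomial $m_\rho$ is reduced, yet the paper's proof of this theorem immediately writes ``let $x_I = \prod_{\rho\in I} x_\rho$ denote a reduced monomial,'' and reducedness is indispensable for \cref{monomialDegenToroidal} (a non-reduced special fiber is not strictly toroidal). The paper's own examples confirm the intended reading: the discussion of empty simplices asserts that $\Delta$ satisfies $(M)$ if and only if it is unimodular, which is only true if $(M)$ demands a \emph{reduced} monomial (otherwise $x_i^{n_i}$ always witnesses $(M)$). So the right resolution is simply to read $(M)$ as requiring $Z(m_\rho)$ to be reduced. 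Be careful with your suggested repair: passing to the radical of $m_\rho$ changes its degree and generally yields a monomial no longer of degree $[D_\delta]$, so ``taking the radical and keeping track of degrees'' is not a valid fallback; nor will a logarithmic resolution salvage a degeneration whose special fiber is non-reduced, since the definition of strict toroidality requires $k[M]/(\chi^m)$ reduced at the outset. The hypothesis must be built into $(M)$, not recovered afterwards.
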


\begin{proof}
    Let $\alpha$ be a lattice polytope such that $\alpha\cap \delta^\circ=\emptyset$ and let $\Delta$ be a lattice polytope containing $\alpha\cup \delta$. Let $g$ be a very general Laurent polynomial with Newton polytope $\Delta$.
    If $\alpha$ is a stably rational polytope, then we are done, so we may assume that $\alpha$ is not stably rational.
    By \cref{fieldReductionProp} it suffices to find a Laurent polynomial $f$ such that $f_\alpha$ and $f_\delta$ are Newton nondegenerate and have Newton polytopes $\alpha$ and $\delta$, respectively, such that 
    \begin{equation*}
    \sbb{\Zo(f_\alpha)}\neq\sbb{\Zo(f_\delta)}.
    \end{equation*}
    Let $F_\eta$ denote a facet of $\delta$ such that $\alpha\cap \delta\subset F_\eta$. Let $x_I=\prod_{\rho\in I} x_\rho$ denote a reduced monomial in the homogeneous coordinate ring of $\P(\delta)$ such that $\eta\in I$. Consider the homogeneous equation (viewed as a Laurent polynomial in the same way as \cref{smoothDegenThm})
    \begin{equation*}
    f = gt - x_I.
    \end{equation*}
    By \cref{monomialLemma} we have $f_\alpha=g_\alpha t$ and $f_\delta=g_\delta t-x_I$ where $g_\delta$ is very general with Newton polytope $\delta$, and $g_\alpha$ very general with Newton polytope $\alpha$. Now, it suffices to show 
    \begin{equation*}
    \Volsb{(Z(f_\alpha))}\neq\Volsb{(Z(f_\delta))}.
    \end{equation*}
    By \cref{monomialDegenToroidal} the closure of $Z(f_\delta)$ in $\P_k(\delta)\times_k R$ is strictly toroidal, so 
    \begin{align*}
    \Volsb(Z(f_\delta))=\sbb{Z(x_I)}=\begin{cases}
        \chi(\delta)\sbb{\Spec{k}} & I=\Sigma^\delta(1) \\
        \sbb{\Spec{k}} & \text{otherwise}
    \end{cases}
    \end{align*}
    which is not equal to $\sbb{\Zo(g_\alpha)}$ since $\alpha$ is not stably rational.
\end{proof}

\begin{theorem} \label{monomialDegenCor}
    Let $\delta\subset \Delta$ be lattice polytopes with $\dim\delta=\dim\Delta$ such that $\delta$ is not stably rational and satisfies the condition $(M)$. Then $\Delta$ is not stably rational.
\end{theorem}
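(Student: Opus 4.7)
The plan is to mirror the proof of \cref{smoothContainment}, invoking \cref{monomialDegenThm} in place of \cref{smoothDegenThm}. Let $g$ be a very general Laurent polynomial with Newton polytope $\Delta$. I will construct a regular integral polyhedral subdivision $\PP$ of $\Delta$ containing $\delta$ as a top-dimensional cell, and then show that in the volume formula of \cref{tropicalVolume} the class $\sbb{\Zo(g_\delta)}$ contributes with nonzero coefficient, so that the sum differs from $\sbb{\Spec{k}}$.

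For the subdivision, I take the lower convex envelope of $\phi\colon \Delta\cap M\to \R$, $\phi(x)=\min\sett{|x-p|^2}{p\in \delta}$, exactly as in the proof of \cref{smoothContainment}. Since $\phi$ vanishes on $\delta$ and is strictly positive on $(\Delta\setminus\delta)\cap M$, the polytope $\delta$ is a cell of the induced regular subdivision $\PP$; if needed I pass to an integral refinement leaving $\delta$ untouched, e.g.\ by pulling the remaining lattice points outside $\delta$ as in \cref{pullingSubDivEx}, so I may assume $\PP$ is integral and regular. Because $\dim\delta=\dim\Delta$, the cell $\delta$ is not contained in $\partial\Delta$, and any other cell $\alpha\in\PP\setminus\set{\delta}$ meets $\delta$ along a common proper face of $\delta$ and hence satisfies $\alpha\cap\delta^\circ=\emptyset$. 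By Bertini's theorem I may further assume $g_\alpha$ is Newton nondegenerate for every $\alpha\in\PP$.

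I now apply \cref{monomialDegenThm}: since $\delta$ is not stably rational and satisfies condition $(M)$, it has strong variation of stable birational type, so $\sbb{\Zo(g_\delta)}\neq \sbb{\Zo(g_\alpha)}$ for every $\alpha\in\PP\setminus\set{\delta}$ (where we observe that if $\alpha$ happens to be stably rational this still holds, because $\delta$ is not). In the free abelian group $\SBB{k}$, the basis element $\sbb{\Zo(g_\delta)}$ therefore appears with coefficient $(-1)^{n+1}(-1)^{\dim\delta}=\pm 1$ in
\[
(-1)^{n+1}\sum_{\substack{\alpha\in\PP \\ \alpha\centernot\subset \partial\Delta}} (-1)^{\dim\alpha}\sbb{\Zo(g_\alpha)},
\]
and since $\sbb{\Zo(g_\delta)}\neq \sbb{\Spec{k}}$, the sum differs from $\sbb{\Spec{k}}$. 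By \cref{tropicalVolume}, $\Delta$ is not stably rational.

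The main difficulty has already been absorbed into \cref{monomialDegenThm}, which produces strong variation via the strictly toroidal monomial degenerations of \cref{monomialDegenToroidal}; once that is granted, the present corollary is a combinatorial bookkeeping exercise. The only mild points requiring care are verifying $\delta\not\subset\partial\Delta$ (which is immediate from the hypothesis $\dim\delta=\dim\Delta$) and arranging integrality of $\PP$ without destroying $\delta$ as a cell, which can always be achieved by a pulling refinement confined to $\Delta\setminus\delta$.
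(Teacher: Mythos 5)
Your proposal is correct and takes essentially the same approach as the paper, which proves \cref{monomialDegenCor} by remarking that it "follows using the same argument as \cref{smoothContainment}," with \cref{monomialDegenThm} playing the role that \cref{smoothDegenThm} plays there. The only additions in your write-up are helpful bookkeeping (passing to an integral pulling refinement of the lower-envelope subdivision and noting that $\dim\delta=\dim\Delta$ forces $\delta\not\subset\partial\Delta$), which the paper leaves implicit.
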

\begin{proof}
Follows using the same argument as \cref{smoothContainment}.
\end{proof}

\begin{example}
    Some examples that satisfy the condition $(M)$ or the second condition of \cref{smoothDegenThm}.
    \begin{itemize}
        \item All dilated standard simplices $d\Delta_n$ and all products
        \begin{equation*}
        d_1\Delta_{n_1}\times \dots\times d_r\Delta_{n_r}.
        \end{equation*}
        Note that $d\Delta_n$ does not satisfy \cref{monomialDegenThm} then $d > n+1$, but they are smooth, and it is easy to show that they satisfy the second condition of \cref{smoothDegenThm}.
        \item The polytopes corresponding to $n$-dimensional weighted projective spaces of the form $\P(d,1,\dots,1)$ for $d\leq n$ with polarization $\OO(d)$.
        \item The polytope $\Delta_{\operatorname{HPT}}$ corresponding to a particular hypersurface in $\P^3\times \P^2$ of bidegree $(2,2)$. See \cref{HPTexample}.
    \end{itemize}
\end{example}

\begin{example}{(Non-simplicial)}
    The condition $(M)$ can hold for non-simplicial toric varieties. The polytope
    \begin{align*}
        \Delta = \Conv\set{0,e_1+e_2,e_1+e_3,e_2+e_3,e_1+e_2+e_3}\subset \R^3
    \end{align*}
    is not simplicial. The homogeneous coordinate ring is generated by $x_0,\dots, x_5$ where
    \begin{align*}
        H^0(\P(\Delta),\Lb(\Delta)) = \langle x_{4}x_{5}^{2},\:x_{2}x_{3}x_{5},\:x_{0}x_{1}x_{4},\:x_{1}x_{3}^{2},\:x_{0}x_{2}^{2} \rangle.
    \end{align*}
    In particular, $\Delta$ satisfies condition (M).
\end{example}

\begin{example}{(Non-Fano)}
    Consider the lattice polytope
    \begin{align*}
        \Delta = \Conv\set{e_1,e_2,2e_1,2e_2,e_1+2e_2,2e_1+e_2,-6e_1-6e_2+2e_3}\subset \R^3.
    \end{align*}
    A \citepalias{M2} computation shows that $\Cl(\P(\Delta))\simeq \Z^4$ and $\Pic(\P(\Delta))\simeq \Z$ with the inclusion $\Z\to \Z^4$ given by $1\mapsto 2e_1$. The anti-canonical divisor $-K_{\P(\Delta)}$ corresponds to $3e_1+e_2-e_4$, in particular it is not $\Q$-Cartier. A simple calculation in \citepalias{M2} shows that $\Delta$ satisfies condition (M).
\end{example}

\begin{example}
    Let $\Delta$ be an empty $n$-dimensional simplex. Then the homogeneous coordinate ring of $\P(\Delta)$ is generated by elements $x_0,\dots,x_n$ such that for each $x_i$ there is some $n_i$ such that $x_i^{n_i}$ corresponds to a global section of $\Lb(\Delta)$. Since $\Delta$ is empty, these generate all global sections of $\Lb(\Delta)$. In particular, $\Delta$ satisfies condition $(M)$ if and only if it is unimodular.
\end{example}

\begin{example} \label{HPTexample}
Consider $\P^3\times \P^2$ with homogeneous coordinates $(U:V:W:T)$ on the first factor and $(x:y:z)$ on the second. Let
\begin{equation*}
    F(x,y,z)=x^2+y^2+z^2-2(xy+xz+zy)
\end{equation*}
and consider the bidegree $(2,2)$ hypersurface defined by
\begin{equation*}
    yzU^2+xzV^2+xyW^2+F(x,y,z)T^2=0.
\end{equation*}
It was shown in \cite{HPT18} that this is not stably rational. The Laurent polynomial obtained by $T=z=1$ has the following Newton polytope.
\begin{align*}
\Delta_{\operatorname{HPT}} = \Conv \set{0, 2e_1, 2e_2, e_2+2e_3, e_1+2e_4, e_1+e_2+2e_5}
\end{align*}
The Laurent polynomial above is not Newton non-degenerate but follows from \cite[Prop. 3.1]{Sch19a} that $\Delta$ is not stably rational.
We have the exact sequence
\begin{equation*}
\begin{tikzcd}
0 \arrow[r] & \Z^5 \arrow[r] & \Z^6 \arrow[r] & \Cl(\P(\Delta)) \arrow[r] & 0
\end{tikzcd}
\end{equation*}
where the first map is given by
\begin{align*}
    e_1&\mapsto 2e_2-2e_5,~  e_2\mapsto 2e_2-2e_4,~ e_3\mapsto e_1-e_2-e_4 \\
    e_4&\mapsto -e_2+e_3-e_5,~ e_5\mapsto -e_4 -e_5+e_6
\end{align*}
There is an isomorphism $\Cl(\P(\Delta))\simeq \Z\times\Z/2\Z\times \Z/2\Z$ such that the Cox ring is $S=k[x_1,\dots,x_6]$ with a $\Z\times\Z/2\Z\times \Z/2\Z$ such that
\begin{align*}
    \deg(x_1)&=(2,1,0),~\deg(x_2)=(1,0,0),~\deg(x_3)=(2,0,1) \\
    \deg(x_4)&=(1,1,0), \deg(x_5)=(1,0,1),~\deg(x_6)=(2,1,1).
\end{align*}
The global sections $H^0(\P(\Delta)), \Lb(\Delta))$ correspond to homogeneous polynomials in $S$ of degree $(4,0,0)$, and a direct calculation yields the following set of generators.
\begin{align*}
\langle x_1^2,x_2^4,x_3^2,x_4^4,x_5^4,x_6^2,x_1x_2x_4,x_4x_5x_6,x_2x_3x_5,x_2^2x_5^2,x_4^2x_5^2,x_4^2x_2^2 \rangle
\end{align*}
This shows that $\Delta_{\operatorname{HPT}}$ satisfies condition (M). 

One can check that any subpolytope of $\Delta$ not properly contained in the boundary is rational, so this is, in a sense, minimal.
\end{example}
In \cref{monomialDegenCor}, we require that the dimensions of the two polytopes be equal. If we start with a polytope $\Delta$ that satisfies condition (M), we cannot immediately use \cref{monomialDegenCor} as a strategy to find non-stably rational polytopes of larger dimensions. However, we can start with $\Delta$ and construct polytopes of higher dimensions that are as ''small'' as possible and contain $\Delta$ (not in the boundary). We give such a construction below and show that these also satisfy the inclusion property of \cref{monomialDegenCor} even if they do not satisfy condition (M) themselves.

\begin{example} \label{exampleHPThigherdim}
Let $\Delta\subset \R^n$ be a lattice polytope. Fix an integer $r\geq 0$ and points $v_k^{+}, v_k^{-}\in \R^n\times \R^r$ for $1\leq k \leq r$ such that $p(v_k^+)=e_k$ and $p(v_k^-)=-e_k$ where $p\colon \R^n\times \R^r\to \R^r$ is the natural projection. We let
\begin{align*}
\Delta(v_1^{+}, v_1^-,\dots, v_r^{+},v_r^-)\subset \R^n\times \R^k
\end{align*}
denote the convex hull of $\Delta$ and the $v_k^{\pm}$. Here, $\Delta\subset \R^n\times \R^r$ is understood via the inclusion $\R^n\to\R^n\times \R^r,~x\mapsto (x,0)$. \cref{fig:subdivision} depicts the polytope $\Delta(e_2,-e_2,e_3,-e_3)\subset \R^3$ where $\Delta=[-1,1]\subset \R$ is the black line.
\end{example}

\begin{figure}
    \centering
    \tdplotsetmaincoords{80}{480}
\begin{tikzpicture}[tdplot_main_coords]	
\fill [green, fill opacity=0.5] (0,0,0) -- (2,0,0) -- (2,2,0) -- cycle;
\fill [green, fill opacity=0.5] (1,1,2) -- (2,0,0) -- (2,2,0) -- cycle;
\fill [green, fill opacity=0.5] (0,0,0) -- (1,1,2) -- (2,0,0) -- cycle;
\fill [blue, fill opacity=0.5] (0,0,0) -- (0,2,0) -- (2,2,0) -- cycle;
\fill [blue, fill opacity=0.5] (1,1,2) -- (0,2,0) -- (2,2,0) -- cycle;
\fill [blue, fill opacity=0.5] (0,0,0) -- (0,2,0) -- (1,1,2) -- cycle;
\fill [red, fill opacity=0.5] (0,0,0) -- (0,2,0) -- (2,2,0) -- cycle;
\fill [red, fill opacity=0.5] (1,1,-2) -- (0,2,0) -- (2,2,0) -- cycle;
\fill [red, fill opacity=0.5] (0,0,0) -- (0,2,0) -- (1,1,-2) -- cycle;
\fill [yellow, fill opacity=0.5] (0,0,0) -- (2,0,0) -- (2,2,0) -- cycle;
\fill [yellow, fill opacity=0.5] (1,1,-2) -- (2,0,0) -- (2,2,0) -- cycle;
\fill [yellow, fill opacity=0.5] (0,0,0) -- (1,1,-2) -- (2,0,0) -- cycle;
\draw (0,0,0) -- (2,0,0) -- (2,2,0) -- (0,2,0) -- cycle;
\draw (1,1,-2) -- (0,0,0);
\draw (1,1,-2) -- (2,0,0);
\draw (1,1,-2) -- (0,2,0);
\draw (1,1,-2) -- (2,2,0);
\draw (1,1,2) -- (0,0,0);
\draw (1,1,2) -- (0,2,0);
\draw (1,1,2) -- (2,2,0);
\draw (1,1,2) -- (2,0,0);
\draw[line width=0.6mm, black] (0,0,0) -- (2,2,0);
\end{tikzpicture}
    \caption{}
    \label{fig:subdivision}
\end{figure}

The polytopes constructed above need not satisfy condition (M) even if $\Delta$ does. However, we will show that they still have the inclusion property as in \cref{monomialDegenCor}.

\begin{proposition} \label{higherDimHPTProp}
Let $\delta\subset \R^n$ be a non-stably rational polytope satisfying condition (M). Let $\delta'=\delta(v_1^+,v_1^-,\dots, v_r^+,v_r^-)\subset\R^{n+r}$. Let $\Delta$ be a polytope of dimension $\dim(\Delta)=\dim(\delta')$ with $\delta'\subset \Delta$. Then $\Delta$ is not stably rational.
\end{proposition}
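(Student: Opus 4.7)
The plan is to apply \cref{tropicalVolume} using a regular integral subdivision $\PP$ of $\Delta$ whose restriction to $\delta'$ is the natural bipyramid subdivision, whose maximal cells are
\begin{equation*}
C_\epsilon = \Conv(\delta, v_1^{\epsilon_1}, \dots, v_r^{\epsilon_r}), \qquad \epsilon \in \set{+,-}^r,
\end{equation*}
all sharing the common face $\delta$. This subdivision is regular via the piecewise linear height function $(x,y) \mapsto \sum_k |y_k|$ on $\R^n \times \R^r$ (equivalently, lift $\delta \cap M$ to height $0$ and each $v_k^\pm$ to height $1$), and we extend to all of $\Delta$ by a generic convex height outside $\delta'$. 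Since $v_k^+$ and $v_k^-$ project to $\pm e_k$, they straddle the hyperplane $\R^n \times 0 \supset \delta$, so no supporting hyperplane of $\delta'$ contains $\delta$; thus $\delta^\circ \subset (\delta')^\circ$, and combined with $\dim \delta' = \dim \Delta$ this yields $\delta^\circ \subset \Delta^\circ$, so $\delta \not\subset \partial\Delta$.

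Taking a very general Laurent polynomial $f$ with Newton polytope $\Delta$ and applying \cref{tropicalVolume}, the task reduces to showing that $\sbb{\Zo(f_\delta)}$ does not cancel with the other contributions in
\begin{equation*}
\Volsb(Z(f)) = (-1)^{\dim\Delta}\sum_{\substack{\alpha\in\PP \\ \alpha\not\subset \partial\Delta}} (-1)^{\dim{\alpha}}\sbb{\Zo(f_\alpha)}.
\end{equation*}
I treat cells $\alpha \in \PP \setminus \set{\delta}$ in two cases. If $\delta \subsetneq \alpha$, then $\alpha$ is a sub-cone $\Conv(\delta, v_{i_1}^{\epsilon_{i_1}}, \dots, v_{i_s}^{\epsilon_{i_s}})$; since the projection of $\alpha$ to $\R^r$ is the unimodular simplex $\Conv(0, \epsilon_{i_1} e_{i_1}, \dots, \epsilon_{i_s} e_{i_s})$, the only lattice points of $\alpha$ are $(\delta \cap M) \cup \settb{v_{i_j}^{\epsilon_{i_j}}}{j=1,\dots,s}$. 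Writing coordinates as $(x,y) \in \Gm^n \times \Gm^r$, we have $f_\alpha = f_\delta(x) + \sum_j c_j x^{w_{i_j}^{\epsilon_{i_j}}} y_{i_j}^{\epsilon_{i_j}}$, which is a single monomial in each $y_{i_j}$; solving for $y_{i_1}$ exhibits $\Zo(f_\alpha)$ as birational to a torus, hence rational, so $\sbb{\Zo(f_\alpha)} = \sbb{\Spec k} \neq \sbb{\Zo(f_\delta)}$. If instead $\delta \not\subset \alpha$, then $\alpha \cap \delta$ is a proper face of $\delta$, so $\alpha \cap \delta^\circ = \emptyset$; the strong variation of $\delta$ furnished by \cref{monomialDegenThm} (valid since $\delta$ satisfies $(M)$ and is not stably rational) gives $\sbb{\Zo(f_\alpha)} \neq \sbb{\Zo(f_\delta)}$.

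Consequently, $\sbb{\Zo(f_\delta)}$ appears in the sum with coefficient $\pm 1$, and since $\Zo(f_\delta)$ is not stably rational, $\Volsb(Z(f)) \neq \sbb{\Spec k}$ in $\SBB{k}$. Therefore $\Delta$ is not stably rational.

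The main obstacle is that the cells $\alpha \supsetneq \delta$ in the bipyramid subdivision fall outside the scope of strong variation (they contain $\delta^\circ$), so one cannot invoke \cref{monomialDegenThm} for them. Their rationality must be extracted from the explicit form of $f_\alpha$, and this argument hinges on the minimality constraint $p(v_k^\pm) = \pm e_k$, which forces exactly one lattice point above each apex direction and thereby makes $f_\alpha$ linear in each apex coordinate. This is precisely the role of the specific construction of $\delta'$ in the statement.
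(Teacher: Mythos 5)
Your high-level strategy is identical to the paper's: build a regular integral subdivision $\PP$ of $\Delta$ in which $\delta$ occurs as a cell, show the cells of $\PP$ that have $\delta$ as a proper face are all rational, invoke strong variation (\cref{monomialDegenThm}) for the remaining cells, and then read off that the coefficient of $\sbb{\Zo(f_\delta)}$ in \eqref{volumeFormula} is $\pm 1$. Your rationality argument for the cells $\alpha\supsetneq\delta$ (observing $f_\alpha$ is a single monomial in each apex coordinate and solving for one of them) is a concrete version of the paper's observation that those cells have lattice width one; these are equivalent.

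However, the paper does not use the $\ell^1$-height $(x,y)\mapsto\sum_k|y_k|$, and there is a real subtlety you are glossing over. The piecewise linear function $\sum_k|y_k|$ has its kinks along the coordinate hyperplanes $\{y_k=0\}$, so the regular subdivision it induces on $\delta'$ is by the sign octants; its interior strata are the slices $\delta'\cap\{y_i=0:i\in I\}$. Those coincide with the cells $C_\epsilon$ and with $\delta$ only when $\delta'\cap(\R^n\times\{0\})=\delta$ — equivalently, when the midpoints $\tfrac{1}{2}\bigl(p(v_k^+)_{\R^n}+p(v_k^-)_{\R^n}\bigr)$ all lie in $\delta$, where here I mean the $\R^n$-components. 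If some $v_k^\pm$ both project, in the $\R^n$-direction, far outside $\delta$, then $\bigcup_\epsilon C_\epsilon\subsetneq\delta'$, the claimed "bipyramid subdivision" is not a subdivision at all, and the $\sum|y_k|$-subdivision does not contain $\delta$ as a cell — so \cref{tropicalVolume} cannot be applied with $\delta\in\PP$. (The paper's proof implicitly uses the same hypothesis when it asserts $\alpha^r_0=\delta$, and the paper instead uses a height $\phi(x)=\sum_{i=0}^r\min_{p\in\alpha^i_0}|x-p|^2$ built from nested slices of $\delta'$ — in particular the $i=0$ term $\min_{p\in\delta'}|x-p|^2$ forces a kink along $\partial\delta'$, which also takes care of your "extend by a generic convex height" step: the extension must keep cells from straddling $\partial\delta'$, which a truly generic extension would not do.) You should state the hypothesis on the location of the $v_k^\pm$ explicitly and check that it holds in the applications, and replace the generic extension by one that vanishes on $\delta'$ and is strictly positive outside (e.g.\ add $\min_{p\in\delta'}|x-p|^2$ to the heights), so that the subdivision of $\Delta$ genuinely restricts to the one on $\delta'$.
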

\begin{proof}
Let $e_1,\dots, e_{n+r}$ denote the standard basis on $\R^n\times \R^r$ with dual basis $e_i^\vee$. Let $H^i_{\leq 0}$ and $H^i_{\geq 0}$ denote the halfspaces defined by $e_{n+i}^\vee\leq 0$ and $e_{n+i}^\vee\geq 0$, respectively. Denote by $H^i_0$ the intersection. Further, let 
\begin{align*}
\alpha^i_{\leq 0} &= \delta\cap H^1_{\leq 0}\cap \dots \cap H^i_{\leq 0} \\
\alpha^i_{\geq 0} &= \delta\cap H^1_{\geq 0}\cap \dots \cap H^i_{\geq 0} \\ 
\alpha^i_{0} &= \alpha^i_{\leq 0}\cap \alpha^i_{\geq 0}
\end{align*}
For convenience we let $\alpha^0_0$ denote $\alpha^1_{\leq 0}\cup \alpha^1_{\geq 0}$. Note that $\alpha^r_0=\delta$. Now consider the regular subdivision $\PP$ on $\Delta$ defined by
 \begin{align*}
\phi\colon \Delta \to \R,~~x\mapsto \sum_{i=0}^r \min_{p\in \alpha^i_0} |x-p|^2.
\end{align*}
\cref{fig:subdivision} shows this subdivision in the case where $\Delta(e_2,-e_2,e_3,-e_3)$ and $\Delta=[-1,1]\subset \R$.

The function $\phi$ is identically zero on (and only on)  $\alpha^r_0$, so $\delta\in\PP$. It is not differentiable along the boundary of $\delta'$. Hence any cell containing $\delta$ must be properly contained in $\delta'$. These are of width 1, hence rational. Since $\delta$ has strong variation of stable birational type, it does not cancel with any cell in $\PP$ that does not properly contain it by \cref{monomialDegenThm}. \cref{tropicalVolume} now shows that $\Delta$ is not stably rational.
\end{proof}

\subsection{Polytopes with positive Kodaira dimension has strong variation}
A consequence of \cref{monomialDegenThm} is that polytopes $\Delta$ with $\kappa(\Delta)\geq 0$ have strong variation. For example, if $F$ is a general homogeneous polynomial of degree $n+1$ in $\P^n$, then the stable birational type of $F+tx_0\cdots x_n$ varies with $t$.

\begin{corollary} \label{CYvariation}
    Let $\delta$ be a lattice polytope of dimension at least two such that
    \begin{align*}
        \delta^\circ\cap M = \set{0}.
    \end{align*}
   Then $\delta$ has strong variation of stable birational type.
\end{corollary}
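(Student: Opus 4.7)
The strategy is to apply \cref{monomialDegenThm}: once $\delta$ is shown to be not stably rational and to satisfy condition $(M)$, strong variation is immediate. Both hypotheses will follow from the presence of $0$ in the strict interior of $\delta$.

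For non-stable rationality, the inclusion $\delta^\circ \cap M \subset \delta^{\FI}$ (noted just after \cref{kodaira_fine_interior}) gives $0 \in \delta^{\FI}$, so $\delta^{\FI}$ is nonempty and $\kappa(\delta) \geq 0$. By \cref{kodaira_fine_interior}, a smooth projective compactification of $\Zo(g)$ for very general Newton nondegenerate $g$ with Newton polytope $\delta$ has Kodaira dimension at least $0$, hence is not stably rational.

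For condition $(M)$: under the standard bijection between $\delta \cap M$ and degree $[D_\delta]$ monomials in the Cox ring of $\P(\delta)$, the interior lattice point $0$ corresponds to the Cox monomial $\chi^0 = \prod_{\rho \in \Sigma^\delta(1)} x_\rho^{a_\rho}$, where $D_\delta = \sum_{\rho} a_\rho D_\rho$. Because $0$ is strictly inside every supporting halfspace of $\delta$, we have $\langle 0, u_\rho\rangle + a_\rho = a_\rho \geq 1$ for every $\rho$; equivalently, by \cref{monomialLemma} the divisor $Z(\chi^0)$ contains every $D_\rho$. Thus the single monomial $\chi^0$ can be taken as $m_\rho$ for every ray $\rho$, witnessing $(M)$, and \cref{monomialDegenThm} then yields strong variation of stable birational type.

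The main technical point to handle carefully is that the strictly toroidal model supplied by \cref{monomialDegenToroidal}, and invoked inside the proof of \cref{monomialDegenThm}, is built from a \emph{reduced} monomial $x_I = \prod_{\rho \in I} x_\rho$. When $\delta$ is reflexive, $a_\rho = 1$ for all $\rho$ and $\chi^0$ is already reduced, so the argument goes through verbatim. In the more general canonical case where some $a_\rho \geq 2$, one would instead either produce, facet by facet, a reduced monomial containing $x_\rho$ coming from a lattice point at lattice distance $1$ from $F_\rho$ and at most $1$ from every other facet, or else pass to a toric blow-up refining $\Sigma^\delta$ on which the pullback of $D_\delta$ becomes a reduced simple normal crossings divisor and rerun the strictly toroidal degeneration argument there.
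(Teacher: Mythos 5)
Your proof follows exactly the paper's route: use the interior lattice point to get $\kappa(\delta)\geq 0$ (hence not stably rational), verify condition $(M)$ via the monomial $\chi^0$ corresponding to $0$, and apply \cref{monomialDegenThm}. That structure is correct, and your observation in the final paragraph pinpoints a real subtlety that the paper's one-sentence proof glosses over.

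The key point you raise is genuine. The Cox monomial corresponding to $0\in\delta^\circ$ is $\prod_{\rho}x_\rho^{a_\rho}$ with $a_\rho\geq 1$ for all $\rho$, and this is reduced if and only if every $a_\rho=1$, i.e., precisely when $\delta$ is reflexive. As stated, \cref{conditionMDef} only asks that $D_\rho\subset Z(m_\rho)$, which $\chi^0$ satisfies; but the proof of \cref{monomialDegenThm} explicitly takes ``a reduced monomial $x_I=\prod_{\rho\in I}x_\rho$ with $\eta\in I$'' and then feeds it into \cref{monomialDegenToroidal}, whose hypothesis and whose invocation of the strictly toroidal condition ($k[M]/(\chi^m)$ reduced) really do require reducedness. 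So there is a mismatch between condition $(M)$ as literally stated and the hypothesis actually used; the paper's phrase ``a monomial with a reduced zero set containing the whole toric boundary'' is literally true (the \emph{reduction} of $Z(\chi^0)$ is the boundary) but does not supply a reduced monomial, and hence does not feed into \cref{monomialDegenThm} as written when $\delta$ is canonical but not reflexive. For $\dim\delta\geq 3$ such $\delta$ exist in abundance, so this is not a vacuous case.

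Your two suggested repairs are plausible directions but neither is established in your write-up. For the first, a lattice point at distance exactly $1$ from $F_\rho$ and at most $1$ from every other facet need not exist for a general canonical Fano polytope (two facets could both sit at distance $\geq 2$ from $0$ in a way that makes this impossible). For the second, the pullback of $\chi^0$ along a toric resolution remains the pullback of the non-reduced divisor $\sum a_\rho D_\rho$, so it does not become reduced by a blow-up alone; one would have to combine this with a ramified base change in $t$ and a normalization (semi-stable reduction) and then check that all strata of the resulting model are still rational. In short: you have correctly located the gap, and your proof is valid under the additional hypothesis that $\delta$ is reflexive; closing it in full generality requires an argument beyond what either you or the paper provides at this point.
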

\begin{proof}
The unique lattice point in the interior of $\delta$ induces a monomial with a reduced zero set containing the whole toric boundary, so $\delta$ satisfying condition $(M)$. The result now follows from \cref{monomialDegenThm}.
\end{proof}

\begin{corollary}
    Let $\delta$ be a polytope of dimension at least 2 with $|\delta^\circ\cap M|\neq\emptyset$. Then $\delta$ has strong variation of stable birational type.
\end{corollary}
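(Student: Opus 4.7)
The plan is to mimic the proof of \cref{CYvariation}, dropping the uniqueness assumption on the interior lattice point and once again reducing to \cref{monomialDegenThm}. First I would pick any lattice point $p \in \delta^\circ \cap M$ and translate $\delta$ by $-p$; this unimodular change of coordinates does not affect the isomorphism class of $\Zo(f)$ for any Newton nondegenerate Laurent polynomial with Newton polytope $\delta$. Working in the affine span of $\delta$, I may therefore assume $\delta$ is full-dimensional with $0 \in \delta^\circ$.

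The main step is verifying condition $(M)$. Writing the polarization as $D_\delta = \sum_\rho a_\rho D_\rho$ one has $a_\rho = -\ord_\delta(u_\rho)$, and since $0$ lies strictly inside every supporting halfspace of $\delta$ we obtain $0 = \langle 0, u_\rho\rangle > \ord_\delta(u_\rho)$, hence $a_\rho \geq 1$ for every ray $\rho \in \Sigma^\delta(1)$. The torus-invariant global section of $\Lb(\delta)$ associated with the lattice point $0 \in \delta \cap M$ is represented in the homogeneous coordinate ring by the monomial $m = \prod_\rho x_\rho^{a_\rho}$. Since every variable $x_\rho$ appears in $m$, its reduced zero locus is the entire toric boundary, so $D_\rho \subset Z(m)$ for all $\rho$. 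Thus condition $(M)$ is satisfied, with the single monomial $m$ serving as $m_\rho$ for every ray.

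Finally, every interior lattice point lies in the Fine interior by the remark following \cref{kodaira_fine_interior}, so $\delta^{\FI} \neq \emptyset$; combined with $\dim \delta \geq 2$, \cref{kodaira_fine_interior} forces $\kappa(\delta) \geq 0$, ruling out stable rationality of $\delta$. Applying \cref{monomialDegenThm} then delivers the strong variation of stable birational type for $\delta$. There is no genuine technical obstacle — the argument is a direct transcription of the reflexive case treated in \cref{CYvariation}, and the uniqueness of the interior lattice point is never used.
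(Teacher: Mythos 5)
Your argument has a genuine gap at the step where you claim condition $(M)$ holds. After translating the interior lattice point to $0$, the torus-invariant section corresponding to $0$ is indeed the monomial $m = \prod_\rho x_\rho^{a_\rho}$ with every $a_\rho \geq 1$, and this does have degree $[D_\delta]$ and contain every $D_\rho$ in its zero set. But that is \emph{not} what condition $(M)$ requires for the downstream argument to work. In the proof of \cref{monomialDegenThm}, the monomial is explicitly taken to be a \emph{reduced} monomial $x_I = \prod_{\rho\in I} x_\rho$: reducedness is essential in \cref{monomialDegenToroidal}, where the toroidality hinges on $Z(x_I)$ being a reduced scheme (so that $k[M]/(\chi^m)$ is reduced in the local model). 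Your monomial $m$ is reduced only when $a_\rho = 1$ for all $\rho$, i.e.\ when $\delta$ is reflexive with respect to the chosen interior point. In general the $a_\rho$ can exceed $1$, and the reduced monomial $\prod_\rho x_\rho$ then has degree $-[K]$, not $[D_\delta]$, so no suitable reduced monomial exists. The paper flags exactly this failure: in the example list after \cref{monomialDegenCor} it states that $d\Delta_n$ fails condition $(M)$ as soon as $d > n+1$, even though $d\Delta_n$ has interior lattice points for all $d\geq n+1$. So your claim that ``any polytope with an interior lattice point satisfies condition $(M)$'' is directly contradicted by $4\Delta_2$, $5\Delta_3$, etc.

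The paper's actual proof takes a completely different route: it proceeds by induction on $k = |\delta^\circ\cap M|$, with \cref{CYvariation} as the base case $k=1$. For the inductive step it does \emph{not} verify condition $(M)$; instead it performs a pulling subdivision of $\delta$ at an interior lattice point, observes that the cells of the resulting subdivision containing that point have strictly fewer interior lattice points, applies the inductive hypothesis to those, and rules out cancellation in the motivic volume formula by a careful count of the signed contributions (distinguishing the cases where interior lattice points lie on edges vs.\ higher-dimensional cells, and using the Kodaira-dimension comparison). This is substantially more work than the reflexive case, and the reason is precisely the failure you overlooked. You would need to either supply a fundamentally different degeneration (not governed by a single reduced monomial) or reproduce some version of the paper's induction.
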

\begin{proof}
    We proceed by induction on $|\delta^\circ\cap M|=k$. The case $k=1$ follows from \cref{CYvariation}. Suppose the claim holds for $|\delta^\circ\cap M|<k$.

    Let $\alpha$ be any lattice polytope such that $\alpha\cap \delta^\circ=\emptyset$. Let $\Delta$ be any lattice polytope containing $\alpha\cup\delta$ and $g$ a very general Laurent polynomial with Newton polytope $\Delta$.
    Let $0\in\delta^\circ$ be a lattice point, and $\PP$ is the subdivision of $\delta$ obtained by pulling $0$ in the trivial subdivision. This consists of the convex hull of $0$ with the faces of $\delta$ and is regular (see \cref{pullingSubDivEx}). The restriction of $\PP$ to $\alpha\cap\delta$ is constant, so if for a very general Laurent polynomial $g$ with Newton polytope $\Delta$, we have $\sbb{\Zo(g_\alpha)} = \sbb{\Zo(g_\delta)}$. Then we also have 
    \begin{equation} \label{contradictionEq}
        \sbb{\Zo(g_\alpha)} = (-1)^{n+1}\sum_{\substack{\beta\in\PP \\ \beta\centernot\subset \partial\delta}} (-1)^{\dim{\beta}}\sbb{\Zo(g_\beta)}.
    \end{equation}
    The right-hand side of this equation contains a polytope $\beta_0$ with at least one and at most $k-1$ lattice points in its interior. By induction, it is not stably birational to any other lattice polytopes not properly containing it, assuming $\dim \beta_0\geq 2$. If $\beta_0$ has dimension 1 and contains $r-1$ interior lattice points, then $\sbb{\Zo(g_\beta)}=r\sbb{\Spec{k}}$. The faces of $\PP$ of dimension at least one not contained in $\partial \delta$ bijectively correspond to the faces of $\delta$. If all interior lattice points lie on edges, then the signed number of terms on the right-hand side of \eqref{contradictionEq} is
    \begin{align*}
    (-1)^{n}(-k+1) + (-1)^{n+1}\sum_{\substack{\beta\in\PP\\ \beta\centernot\subset \partial\delta}} (-1)^{\dim{\beta}} &= \chi(\delta)+ (-1)^{n+1}( k-1 ) 
    \end{align*}
    This is $1-k$ if $n$ is odd and $1+k$ if $n$ is even.
    Since $k\geq 2$, this is never equal to 1. \eqref{contradictionEq} now implies that $\Zo(g_\alpha)$ is not irreducible, which is a contradiction.
    
    If $\beta_0$ is contained in a polytope without interior lattice points, these are not stably birational as they have different Kodaira dimensions. If it is contained in a lattice polytope with interior lattice points, then this is a lattice polytope with at least 1 and at most $k-1$ interior lattice points, and hence it is not stably birational to $\beta_0$ by induction. This shows that \eqref{contradictionEq} cannot hold a contradiction. 
\end{proof}

\section{Four dimensional polytopes} \label{section4}
When the polytopes have dimension $\leq 4$, we prove that the property of not being stably rational is inclusion preserving without any assumptions on the polytopes. Throughout this section, we fix $k=\C$.
\begin{theorem} \label{mainThmDim4}
    Let $\delta\subset\Delta\subset M_\R$ be lattice polytopes with $\dim\Delta \leq 4$, $\delta$ not stably rational, $\delta\not\subset \partial\Delta$. Then $\Delta$ is also not stably rational.
\end{theorem}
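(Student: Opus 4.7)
The plan is to construct a regular integral subdivision $\PP$ of $\Delta$ with $\delta\in\PP$ and invoke \cref{tropicalVolume}. Because stable birational equivalence preserves dimension, the stratum $\sbb{\Zo(f_\delta)}$ can only be cancelled by strata $\sbb{\Zo(f_\alpha)}$ with $\dim\alpha=\dim\delta$. The whole game is to arrange $\PP$ so that no such cancellation is forced.

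First I would reduce to the case $\dim\delta=\dim\Delta$. If $\dim\delta<\dim\Delta$, thicken $\delta$ to a lattice polytope $\delta'\subset\Delta$ of dimension $\dim\Delta$ that still meets the interior of $\Delta$, using auxiliary interior lattice points in the spirit of the construction preceding \cref{higherDimHPTProp}, and verify that $\delta'$ inherits non-stable rationality from $\delta$ (for instance, via a pulling subdivision that has $\delta$ as a face). Then choose $\PP$ as the regular subdivision induced by the lower-convex envelope of the squared-distance function to $\delta'$ on $\Delta\cap M$ (as in the proof of \cref{smoothContainment}), so that $\delta'\in\PP$ and every other cell either lies in $\partial\Delta$ or is geometrically simple. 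Further pulling refinements (\cref{pullingSubDivEx}) can be used to control the remaining cells.

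In the tropical volume formula of \cref{tropicalVolume}, strata of dimension strictly less than $\dim\Zo(f_{\delta'})=\dim\Delta-1$ cannot cancel $\sbb{\Zo(f_{\delta'})}$ for dimensional reasons, so the only concern is same-dimensional cells. Because $\dim\Delta\le 4$ we have $\dim\Zo(f_{\delta'})\le 3$. When this dimension is $1$, the potentially cancelling strata are curves, and non-stable rationality of $\Zo(f_{\delta'})$ forces positive genus; since distinct cells produce distinct general curves, no cancellation occurs. When this dimension is $3$ (so $\dim\Delta=4$ and $\delta'$ is itself $4$-dimensional), the competing strata are threefolds, handled by either verifying a variant of condition $(M)$ for $\delta'$ and invoking \cref{monomialDegenCor}, or by an ad hoc construction based on the thickening.

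The main obstacle, and the reason the dimension bound appears, is the case $\dim\Zo(f_{\delta'})=2$: the competing surfaces $\Zo(f_\alpha)$ arise as ample divisors in smooth complete toric $3$-folds $\P(\alpha)$ for $3$-dimensional cells $\alpha\in\PP$. The key geometric claim, advertised in the introduction, is that such ample surfaces cannot be birationally ruled over a curve of positive genus. Together with the Enriques--Kodaira classification of complex surfaces (so that stably birational surfaces share a birational minimal model), this input forces $\sbb{\Zo(f_\alpha)}\neq\sbb{\Zo(f_{\delta'})}$ unless $\Zo(f_\alpha)$ is itself rationally connected, in which case it is stably rational and again cannot cancel a non-stably rational class. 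Proving the no-ruling statement via a case analysis of polytopes and adjoint systems on smooth complete toric $3$-folds is the technical heart of the argument and will be the hardest step to carry out rigorously.
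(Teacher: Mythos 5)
Your opening premise is false: stable birational equivalence does \emph{not} preserve dimension, since $X$ and $X\times\P^1$ are stably birational but of different dimensions. This undermines the claim that only same-dimensional cells can cancel $\sbb{\Zo(f_\delta)}$, and in fact those are not even the dangerous cells — any $4$-dimensional $\alpha\in\PP$ contributes with the \emph{same} sign $(-1)^{\dim\alpha}$ as $\delta$ in the tropical volume formula and therefore reinforces rather than cancels. The genuine threat comes from the cells of odd codimension, above all the $3$-dimensional ones, which carry the opposite sign. Your ``no ruling over positive genus'' observation does attack exactly those cells, so you have located the right lemma, but the route to it is mistaken and the effort you then spend on full-dimensional cells via condition $(M)$ or an ad hoc construction is misdirected. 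The proposed thickening step is also problematic: establishing that the thickened $\delta'$ is not stably rational is essentially the inclusion property one is trying to prove, so as stated it risks circularity.

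The paper's argument is cleaner and sidesteps all of this by a Fine-interior reduction rather than a dimension reduction. One first notes that if $\Delta^{\FI}\neq\emptyset$ then $\kappa(\Delta)\geq 0$ and $\Delta$ is already not stably rational, so one may assume $\Delta^{\FI}=\emptyset$, in which case \emph{every} subpolytope $\alpha\subset\Delta$ satisfies $\alpha^{\FI}=\emptyset$. This automatically forces $\dim\delta=4$: if $\dim\delta\leq 3$ then $\delta^{\FI}=\emptyset$ and \cref{dim3_rational} would make $\delta$ rational, contradicting the hypothesis. Taking the squared-distance subdivision with $\delta$ as a cell, every cell of dimension $\leq 3$ has empty Fine interior and is rational by \cref{dim3_rational}, so it contributes $\sbb{\Spec k}$; the $4$-dimensional cells all have the same sign, so the coefficient of the non-trivial class $\sbb{\Zo(f_\delta)}$ is strictly positive and the obstruction is nontrivial. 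The content of \cref{dim3_rational} is made rigorous not via case analysis on ample divisors in smooth toric $3$-folds but via the combinatorial Hodge number formula of Danilov--Khovanskii (\cref{hodge_comp}, \cref{hdgNumberCor}), giving $h^{1,0}(Z(f))=0$, whence a ruled $Z(f)$ is rational; no smoothness of $\P(\alpha)$ is needed.
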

We show this by showing the following stronger result.
\begin{proposition} \label{ratTheorem}
        Let $\Delta\subset M_\R$ be a lattice polytope of dimension $\leq 3$. If $\kappa(\Delta)=-\infty$, then $\Delta$ is rational.
\end{proposition}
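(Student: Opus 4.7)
The plan is to treat the three dimensions separately, with $\dim\Delta = 3$ being the substantive case.

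For $\dim\Delta = 1$, the hypothesis $\Delta^{\FI}=\emptyset$ forces $\Delta$ to be a unit segment, so a general $\Zo(f)$ is a single torus point; for $\dim\Delta = 2$, a general $\Zo(f)$ is a smooth affine curve whose smooth projective model $X$ has $\kappa(X)=-\infty$ by \cref{kodaira_fine_interior}, and any smooth projective curve of Kodaira dimension $-\infty$ is isomorphic to $\P^1$. In both cases $\Zo(f)$ is rational.

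The real work is the case $\dim\Delta=3$. The plan is to fix a smooth projective toric refinement $\pi\colon\widetilde Y\to\P(\Delta)$ such that the proper transform $\widetilde X\subset\widetilde Y$ of $Z(f)$ is smooth, which exists by Newton non-degeneracy (cf.\ \cite[Prop.~3.3c]{NO20a}), and to show that $\widetilde X$ is rational. By \cref{kodaira_fine_interior} we have $\kappa(\widetilde X)=-\infty$, hence $p_g(\widetilde X)=0$ and $\widetilde X$ is birationally ruled by Enriques--Kodaira. By Castelnuovo's rationality criterion it then suffices to prove $q(\widetilde X)=0$; equivalently, that the base of the ruling is $\P^1$. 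This is precisely the statement advertised in the outline of \cref{section4}: that such surfaces cannot be ruled over a curve of positive genus.

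To establish $q(\widetilde X)=0$ the plan is a Lefschetz-style cohomological vanishing. Because $\Delta=\Conv(\Supp(f))$, every vertex of $\Delta$ appears with nonzero coefficient in $f$, so every torus-fixed point of $\P(\Delta)$ lies off $Z(f)$; more generally, Newton non-degeneracy forces $Z(f)$ to meet every torus-invariant stratum properly. Consequently $Z(f)$ contains no center of any torus-invariant blowup used to build $\pi$, so the proper transform agrees with the total transform: $\widetilde X=\pi^* Z(f)$. As the pullback of an ample Cartier divisor under a projective birational morphism, $\widetilde X$ is nef and big on the smooth projective toric threefold $\widetilde Y$. Kawamata--Viehweg vanishing then gives $H^2(\widetilde Y,\OO_{\widetilde Y}(-\widetilde X))=0$, and combined with $H^1(\widetilde Y,\OO_{\widetilde Y})=0$ (which holds on any smooth projective toric variety), the long exact sequence of
\begin{equation*}
0\longrightarrow \OO_{\widetilde Y}(-\widetilde X)\longrightarrow \OO_{\widetilde Y}\longrightarrow \OO_{\widetilde X}\longrightarrow 0
\end{equation*}
forces $H^1(\widetilde X,\OO_{\widetilde X})=0$, i.e.\ $q(\widetilde X)=0$. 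Together with $p_g(\widetilde X)=0$, Castelnuovo's criterion concludes that $\widetilde X$ is rational, whence so is $\Zo(f)$.

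The main obstacle I foresee is verifying the identification $\widetilde X=\pi^* Z(f)$ for a suitable choice of resolution: any exceptional contribution to the proper transform would force a more delicate vanishing or a refined choice of fan subdivision. I expect this to be tractable because the centers that a toric resolution blows up are torus-invariant subvarieties of codimension $\geq 2$, and Newton non-degeneracy keeps $Z(f)$ transverse to all such strata, but some case analysis may be required when $\P(\Delta)$ has non-isolated singularities.
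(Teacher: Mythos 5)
Your proof is correct and, in the crucial $\dim\Delta=3$ case, it is genuinely different from the paper's. The paper computes $h^{1,0}$ of a smooth model of $Z(f)$ directly from the polytope using the Danilov--Khovanskii formula (\cref{hodge_comp}, \cref{hdgNumberCor}) together with the observation that $Z(f)$ has $\LL$-rational singularities (\cref{hodge_number_lemma}); from $h^{1,0}=0$ and $\kappa=-\infty$ it concludes rationality of the ruled surface. You instead pass to a smooth toric resolution and prove $q=0$ via Kawamata--Viehweg vanishing and the long exact sequence, avoiding the mixed-Hodge machinery entirely. Both roads are sound: yours is a more standard application of vanishing theorems, while the paper's yields all the numbers $h^{p,0}$ at once and naturally handles $Z(f)$ singular without ever leaving the formalism of $\Kvar{\C}$.

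The one step worth double-checking in your argument -- the identity $\widetilde X=\pi^*Z(f)$ -- does hold, and in fact holds for \emph{any} toric refinement, not only a composition of smooth-center blowups. The clean reason is combinatorial: for any new ray $\rho$ with primitive generator $u_\rho$, the coefficient of $D_\rho$ in $\pi^*Z(f)=\operatorname{div}(\pi^*f)+\pi^*D_\Delta$ is
\begin{equation*}
\ord_{D_\rho}(f)-\ord_\Delta(u_\rho)=\min_{m\in\Supp(f)}\ip{m,u_\rho}-\min_{m\in\Delta}\ip{m,u_\rho}=0,
\end{equation*}
since $\Delta=\Conv(\Supp(f))$. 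So your worry at the end about non-isolated singularities of $\P(\Delta)$ is unfounded: no case analysis is required. Note that your phrasing ``$Z(f)$ contains no center of any torus-invariant blowup'' is the right sufficient condition, but the relevant fact is that $Z(f)$ does not contain any torus-invariant subvariety of codimension $\geq 2$ (Newton non-degeneracy gives this, since $Z(f)\cap O(\sigma)=\Zo(f_\delta)$ is a proper hypersurface in each orbit), and hence $\pi^*Z(f)$ picks up no exceptional multiplicity because the order of vanishing of a local equation at the generic point of each center is zero. In the $\dim\Delta=2$ case you also differ slightly from the paper, which invokes the classification of hollow lattice polygons rather than \cref{kodaira_fine_interior}; your route is more uniform.
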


Let $X$ be an irreducible variety over $\C$. Recall that $X$ has \emph{$\LL$-rational} singularities if there exists a resolution $\pi\colon \widetilde{X}\to X$ such that $[\pi^{-1}(x)]=1 \mod{\LL}$ in $\Kvar{k(x)}$ for all $x\in X$.
Strictly toroidal singularities are $\LL$-rational \cite[4.2.6]{NS19}, in particular, the hypersurfaces $Z(f)$ have $\LL$-rational singularities when $f$ is general with respect to its Newton polytope.

For a quasi-projective variety $Z$, let $H_c^{k}(Z)$ denote cohomology with compact support. These cohomology groups carry mixed Hodge structures \cite{DK87}, so one can define Hodge numbers $h^{p,q}(H^k_c(Z))$ and consequently the $(p,q)$-Euler characteristic
\begin{equation*}
    e^{p,q}(Z)=\sum_{k} (-1)^k h^{p,q}(H^k_c(Z)).
\end{equation*} 
For any decomposition of $Z$ into a disjoint union $\cup_i Z_i$ of locally closed sets $Z_i$, we have $e^{p,q}(Z)=\sum_{i} e^{p,q}(Z_i)$
By \cite[]{DK87}. Moreover, when $Z$ is smooth and proper, one has $e^{p,q}(Z)=(-1)^{p+q}h^{p,q}(Z)$. Define the $E$-polynomial
\begin{align*}
    E\colon \Kvar{\C} &\to \Z[u,v]
\end{align*}
by $X\mapsto E(X)=\sum e^{p,q} (X)u^pv^q$ for smooth proper varieties $X$. Since $\Kvar{\C}$ is generated by proper smooth varieties, $E$ extends uniquely to all $\Kvar{\C}$.
\begin{lemma}\label{hodge_number_lemma}
    Let $X$ be an irreducible projective variety over $\C$ with $\LL$-rational singularities. Then for any smooth birational model $\widetilde{X}$ of $X$ we have $e^{p,0}(X)=e^{p,0}(\widetilde{X})$.
\end{lemma}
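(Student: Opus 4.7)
The plan is to reduce the lemma to the motivic identity $[\widetilde X] \equiv [X]\pmod{\LL}$ in $\Kvar{\C}$, and then pass to the $E$-polynomial. The key structural fact is that $E$ is a ring homomorphism $\Kvar{\C} \to \Z[u,v]$ with $E(\LL) = E(\A^1) = uv$ (because $H^2_c(\A^1)$ is pure of Hodge type $(1,1)$). Consequently any two classes that agree modulo $\LL$ have $E$-polynomials agreeing modulo $uv$, which forces all coefficients $e^{p,0}$ and $e^{0,q}$ to match.

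To produce the motivic identity, I would fix a resolution $\pi\colon \widetilde X \to X$ satisfying the pointwise condition $[\pi^{-1}(x)] \equiv 1\pmod{\LL}$ from the definition of $\LL$-rational singularities. Let $Z = \operatorname{Sing}(X)$ and $E = \pi^{-1}(Z)$. Since $\pi$ is an isomorphism outside $E$, the cut-and-paste relation yields
\begin{align*}
[\widetilde X] - [X] \;=\; [E] - [Z] \quad \text{in } \Kvar{\C}.
\end{align*}
It remains to show $[E] \equiv [Z]\pmod{\LL}$. I would argue by Noetherian induction on $\dim Z$: at a generic point of each irreducible component of $Z$, combined with generic trivialization in characteristic $0$, one obtains a dense open $U \subset Z$ over which $\pi^{-1}(U) \to U$ is piecewise a Zariski-locally trivial fibration with fiber class $\equiv 1\pmod{\LL}$, so that $[\pi^{-1}(U)] \equiv [U]\pmod{\LL}$. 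The induction hypothesis, applied to the reduced closed complement $Z\setminus U$ and its preimage (whose pointwise fiber condition is inherited from $\pi$), closes the argument.

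Combining these two steps yields $e^{p,0}(X) = e^{p,0}(\widetilde X)$ for the specific resolution chosen in the definition of $\LL$-rationality. To upgrade this to an \emph{arbitrary} smooth birational model $\widetilde X'$, I would invoke the classical birational invariance of $h^{p,0}$ for smooth projective varieties: for smooth proper $\widetilde X,\widetilde X'$ one has $e^{p,0}(\widetilde X) = (-1)^p h^{p,0}(\widetilde X) = (-1)^p h^{p,0}(\widetilde X') = e^{p,0}(\widetilde X')$, so the identity $e^{p,0}(X) = e^{p,0}(\widetilde X)$ holds for any such model.

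The principal technical difficulty lies in the inductive step above: one needs to pass from the pointwise congruence $[\pi^{-1}(x)] \equiv 1\pmod{\LL}$ in each $\Kvar{k(x)}$ to the global congruence $[\pi^{-1}(U)] \equiv [U]\pmod{\LL}$ in $\Kvar{\C}$. This constructible trivialization is standard in the theory (see Nicaise--Shinder, where $\LL$-rationality is introduced precisely so that such motivic congruences hold), but it is the point where the geometric reading of the fiber condition, rather than its $\C$-point version, must be exploited.
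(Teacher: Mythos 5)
Your proposal is correct and follows the same core route as the paper: both reduce to the congruence $[X]\equiv[\widetilde X]\pmod{\LL}$ in $\Kvar{\C}$ and then apply the $E$-polynomial, noting $E(\LL)=uv$ so the congruence forces agreement of all coefficients $e^{p,0}$. The paper's proof is substantially terser, simply asserting the congruence as a consequence of $\LL$-rationality without the Noetherian-induction/constructible-trivialization argument or the explicit step extending from the resolution in the definition to an arbitrary smooth birational model (which you handle via birational invariance of $h^{p,0}$; one could equally appeal to the Larsen--Lunts isomorphism $\Kvar{\C}/(\LL)\simeq\SBB{\C}$). You have correctly identified and filled the two gaps that the paper leaves implicit.
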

\begin{proof}
First, note that $E(\LL)=uv$ where $\LL$ denotes the class of the affine line. Since $X$ has $\LL$ rational singularities we have $[X]=[\widetilde{X}] \mod \LL$ and since $E$ is additive we see that the difference between $E(X)$ and $E(\widetilde{X})$ only occurs in the coefficients of $u^pv^q$ for $p,q>0$. In particular, we have $e^{p,0}(X)=e^{p,0}(\widetilde{X})$ for all $p$. 
\end{proof}
Let $\Delta\subset M_\R$ be a lattice polytope and $f$ a general Laurent polynomial with Newton polytope $\Delta$. The numbers $e^{p,0}(\Zo(f))$ can be computed from the polytope as follows.
\begin{proposition}{{\cite[Prop. 5.8]{DK87}}} \label{hodge_comp}
    Let $\Delta\subset M_\R$ be a lattice polytope and $f$ general with Newton polytope $\Delta$. Then
    \begin{equation} \label{compute_plurigen}
        e^{p,0}(\Zo(f))= (-1)^{n-1} \sum_{\dim{\alpha} = p+1} |\alpha^\circ\cap M|
    \end{equation}
    where the sum is taken over all faces $\alpha\subset \Delta$.
\end{proposition}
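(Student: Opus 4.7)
The plan is to establish the formula by induction on $\dim \Delta$, using the face stratification of the closure $Z(f)$ in $\P(\Delta)$ together with a Hodge-theoretic computation on a smooth model. Since $f$ is general, each restriction $f_\alpha$ is Newton non-degenerate, giving a disjoint decomposition
$$Z(f) = \bigsqcup_{\alpha \preceq \Delta} \Zo(f_\alpha)$$
into smooth torus hypersurfaces of the appropriate dimensions. Additivity of $e^{p,q}$ for mixed Hodge structures on compactly supported cohomology then yields
$$e^{p,0}(Z(f)) = \sum_{\alpha \preceq \Delta} e^{p,0}(\Zo(f_\alpha)),$$
so assuming the formula for each proper face (induction on $\dim \Delta$), the task reduces to computing the left hand side and solving for the $\alpha = \Delta$ summand.

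To compute $e^{p,0}(Z(f))$, I would choose a toric resolution $\widetilde{\P(\Delta)} \to \P(\Delta)$ whose strict transform $\widetilde{Z}$ is smooth projective. Since $f$ is Newton non-degenerate, $Z(f)$ has strictly toroidal, hence $\LL$-rational, singularities, so Lemma~\ref{hodge_number_lemma} gives
$$e^{p,0}(Z(f)) = e^{p,0}(\widetilde{Z}) = (-1)^p h^{p,0}(\widetilde{Z}).$$
The key combinatorial input is the identification of $H^0(\widetilde{Z}, \Omega^p_{\widetilde{Z}})$ with lattice-point data on $\Delta$. For $p = n-1$ this is the classical Khovanskii formula identifying the geometric genus of a smooth torus hypersurface with $|\Delta^\circ \cap M|$, realized concretely via Poincaré residues of differentials of the form $\chi^m \cdot \tfrac{dx_1}{x_1} \wedge \cdots \wedge \tfrac{dx_n}{x_n} / f$ with $m \in \Delta^\circ \cap M$. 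For smaller $p$, global $p$-forms on $\widetilde{Z}$ arise by restriction from forms on the ambient toric variety, and Danilov's description of Zariski differentials on toric varieties controls their pole orders along the boundary strata in terms of interior lattice points of the corresponding faces.

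Assembling the resulting expression for $e^{p,0}(Z(f))$ with the face decomposition and the inductive hypothesis, Möbius inversion over the face poset of $\Delta$ extracts $e^{p,0}(\Zo(f))$. The signs align via the identity
$$\sum_{\alpha \succeq \beta} (-1)^{\dim \alpha} = (-1)^{\dim \Delta}$$
for faces $\beta$ not contained in $\partial \Delta$, which is the Euler relation reflecting that the link of $\beta$ in $\Delta$ is a sphere.

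The main obstacle is the computation of $h^{p,0}(\widetilde{Z})$ for intermediate $0 < p < n-1$. The top case uses only adjunction and residues, and the combinatorial bookkeeping at the end is routine, but the middle Hodge numbers require the full Danilov--Khovanskii machinery: vanishing theorems for torus-invariant sheaves of differentials on $\widetilde{\P(\Delta)}$ together with the Gysin/residue sequence relating forms on the ambient variety, on $Z(f)$, and on its boundary strata. Controlling these simultaneously across the face fan, independently of the choice of resolution, is the technical core of the argument.
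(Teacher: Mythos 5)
The paper itself does not prove this proposition; it is cited directly from Danilov--Khovanskii \cite[Prop.\ 5.8]{DK87}, so there is no ``paper's own proof'' to compare against. Your proposal is therefore an attempt to sketch a proof of a result the paper only cites, and while the structural skeleton is sound — the face stratification $Z(f) = \bigsqcup_{\alpha} \Zo(f_\alpha)$, additivity of $e^{p,q}$ over locally closed strata, passage to a smooth toric model via \cref{hodge_number_lemma}, and M\"obius inversion over the face poset — there is a genuine gap at the step you yourself flag as ``the technical core.''

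Specifically, the computation of $h^{p,0}(\widetilde Z)$ for $0 < p < \dim\Delta - 1$ is left entirely to ``Danilov--Khovanskii machinery,'' which is precisely the content being cited. Worse, this step runs \cref{hdgNumberCor} backwards: in the paper, the vanishing of $h^{p,0}(Z(f))$ for $0 < p < n$ and the identity $h^{n,0}(Z(f)) = |\Delta^\circ\cap M|$ are \emph{derived} from Prop.\ \ref{hodge_comp} via additivity and a star-contractibility argument, so assuming them as input to prove Prop.\ \ref{hodge_comp} is circular unless you supply an independent proof. That independent proof exists (Bott--Steenbrink--Danilov type vanishing for logarithmic differentials on toric varieties together with the residue/Gysin sequence for $\widetilde Z \subset \widetilde{\P(\Delta)}$), but it is comparable in difficulty to the result being proved, and your sketch does not carry it out — it just names the machinery. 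As a secondary point, the identity you invoke, $\sum_{\alpha\succeq\beta}(-1)^{\dim\alpha} = (-1)^{\dim\Delta}$ ``for $\beta\not\subset\partial\Delta$,'' only applies to $\beta = \Delta$ and is tautological there; the relation you actually need is the Euler relation $\sum_{\alpha\succeq\beta}(-1)^{\dim\alpha - \dim\beta} = 0$ for proper faces $\beta$ (equivalently, $\mu(\beta,\Delta) = (-1)^{\dim\Delta-\dim\beta}$ for the Eulerian face poset), which gives the correct M\"obius inversion. In short: the architecture is right and the induction is set up sensibly, but the proof is missing its engine.
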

By additivity, this gives a formula for the numbers $e^{p,0}(Z(f))$.
\begin{corollary} \label{hdgNumberCor}
    Let $f$ be a general Laurent polynomial with Newton polytope $\Delta$ of dimension $n+1$. Then $h^{p,0}(Z(f))=0$ for $0<p<n$ and $h^{n,0}(Z(f))=|\Delta^\circ\cap M|$.
    \end{corollary}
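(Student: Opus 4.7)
The plan is to compute $h^{p,0}(Z(f))$ indirectly via the compactly-supported Hodge--Deligne numbers $e^{p,0}$, by stratifying $Z(f)$ along torus orbits and applying \cref{hodge_comp} to each stratum. A combinatorial cancellation in the face lattice of $\Delta$ will collapse the resulting double sum to a single surviving term.

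Since $f$ is general, $Z(f)$ is Newton nondegenerate and hence strictly toroidal, so it has $\LL$-rational singularities by \cite[4.2.6]{NS19}. For a smooth projective resolution $\widetilde{Z(f)} \to Z(f)$, \cref{hodge_number_lemma} gives $e^{p,0}(Z(f)) = e^{p,0}(\widetilde{Z(f)}) = (-1)^p h^{p,0}(\widetilde{Z(f)})$, so it suffices to pin down $e^{p,0}(Z(f))$.

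The toric variety $\P(\Delta)$ is a disjoint union of torus orbits indexed by faces of $\Delta$, and intersecting with $Z(f)$ yields the locally closed stratification $Z(f) = \bigsqcup_{\alpha} \Zo(f_\alpha)$ over faces $\alpha \subseteq \Delta$ with $\dim \alpha \geq 1$ (vertex strata being empty). Applying additivity of $e^{p,q}$ together with \cref{hodge_comp} to each stratum $\Zo(f_\alpha)$, which is a Newton nondegenerate hypersurface of dimension $\dim \alpha - 1$ with Newton polytope $\alpha$, and swapping the order of summation yields
\begin{equation*}
  e^{p,0}(Z(f)) = \sum_{\substack{\beta \subseteq \Delta \\ \dim \beta = p+1}} |\beta^\circ \cap M| \sum_{\alpha : \beta \subseteq \alpha \subseteq \Delta} (-1)^{\dim \alpha - 2}.
\end{equation*}

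The inner sum is a purely combinatorial quantity. The upper interval $[\beta, \Delta]$ in the face lattice of $\Delta$ is isomorphic as a poset to the face lattice of a polytope of dimension $\dim \Delta - \dim \beta - 1$ (the vertex figure at $\beta$), with $\beta$ playing the role of the empty face. Euler's formula for polytopes then gives $\sum_{\alpha \supseteq \beta}(-1)^{\dim \alpha} = 0$ whenever $\beta$ is a proper face of $\Delta$, so the only surviving term corresponds to $\beta = \Delta$. This forces $p + 1 = n + 1$, i.e., $p = n$, and the whole sum collapses to $\pm |\Delta^\circ \cap M|$ in that case. For $0 < p < n$ this yields $h^{p,0}(Z(f)) = 0$, and for $p = n$ the signs align to give $h^{n,0}(Z(f)) = |\Delta^\circ \cap M|$. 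The main delicacy is the sign-bookkeeping in the Euler relation and verifying that \cref{hodge_comp} is applied to each stratum with the correct ambient dimension $\dim \alpha - 1$; once those are in place the argument is a direct stratification computation.
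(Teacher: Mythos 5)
Your proof is essentially the paper's: stratify $Z(f)$ over torus orbits, apply \cref{hodge_comp} to each stratum, swap the double sum, and kill the inner alternating sum over the interval $[\beta,\Delta]$ by an Euler-characteristic argument --- you phrase this via Euler's relation for the vertex figure, the paper via contractibility of the star of $\beta$, but it is the same combinatorial fact (and your version is the cleaner formulation). One sign slip worth correcting: the $n$ in \cref{hodge_comp} is the dimension of the Newton polytope, not of the hypersurface, so the stratum $\Zo(f_\alpha)$ contributes with factor $(-1)^{\dim\alpha-1}$, not $(-1)^{\dim\alpha-2}$; with that fix the surviving $\beta=\Delta$ term at $p=n$ gives $e^{n,0}(Z(f))=(-1)^{n}\lvert\Delta^\circ\cap M\rvert$ and hence $h^{n,0}(Z(f))=\lvert\Delta^\circ\cap M\rvert$, as required.
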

    \begin{proof}
    Using additivity, we arrive at the following formula
    \begin{align*}
        e^{p,0}(Z(f)) &= \sum_{i=0}^n\sum_{\substack{\delta\subset \Delta \\ \dim{\delta}=i}} \sum_{\substack{\alpha\subset \delta \\ \dim{\alpha}=p+1}} (-1)^{\dim{\delta-1}}|\alpha^\circ \cap M|
    \end{align*}
    Consider a face $\alpha\subset\Delta$ of dimension $p+1<n$. For every proper face $\delta\subset \Delta$ containing $\alpha$, we get a signed contribution of $|\alpha^\circ \cap M|$ to the sum above. This means that up to a sign, the contribution of $|\alpha^\circ\cap M|$ amounts to
    \begin{equation*}
        \sum_{\substack{\alpha\subset\delta}} (-1)^{\dim{\delta}+1} |\alpha^\circ\cap M|.
    \end{equation*}
    Up to a sign, this equals $(\chi(\operatorname{Star}(\alpha))-1)|\alpha^\circ \cap M|$ where $\operatorname{Star}(\alpha)$ is the star of $\alpha$ as a CW complex, consisting of cells containing $\alpha$. Since this is contractible, the sum above is equal to 0.
    \end{proof}

    We are now ready to prove the following theorem.

    \begin{proposition} \label{dim3_rational}
        Let $\Delta\subset M_\R$ be a lattice polytope of dimension $\leq 3$ and let $f$ be a general Laurent polynomial with Newton polytope $\Delta$. Assume further that $\Delta^{\FI}=\emptyset$. Then $\Zo(f)$ is rational.
    \end{proposition}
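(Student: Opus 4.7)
My plan is to reduce to the surface case and then invoke Castelnuovo's classical rationality criterion, using the combinatorial Hodge-theoretic tools already assembled. If $\dim\Delta\leq 1$ the claim is trivial; if $\dim\Delta = 2$, then $\Zo(f)$ is a smooth affine curve whose smooth projective model $X$ satisfies $\kappa(X) = -\infty$ by \cref{kodaira_fine_interior}, which forces $X\simeq\P^1$ and hence $\Zo(f)$ is rational.

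Assume now $\dim\Delta = 3$, so $\Zo(f)$ is a smooth surface, and let $X$ be a smooth projective compactification; such a model exists because a general $f$ is Newton nondegenerate, so a toric resolution of $\P(\Delta)$ induces a resolution of $Z(f)$. By \cref{kodaira_fine_interior} the hypothesis $\Delta^{\FI} = \emptyset$ gives $\kappa(X) = -\infty$. Castelnuovo's theorem over $\C$ asserts that a smooth projective surface with $\kappa = -\infty$ and irregularity $q = 0$ is rational, so it suffices to prove that $q(X) = h^{1,0}(X)$ vanishes.

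To get $q(X) = 0$, I apply \cref{hdgNumberCor} with $n = 2$: for $0 < p < n$, i.e.\ $p = 1$, the formula yields $h^{1,0}(Z(f)) = 0$, equivalently $e^{1,0}(Z(f)) = 0$. Since $f$ is general, $Z(f)$ has $\LL$-rational singularities, so \cref{hodge_number_lemma} identifies $e^{1,0}(X) = e^{1,0}(Z(f)) = 0$; for the smooth proper surface $X$ this forces $h^{1,0}(X) = 0$. Castelnuovo's criterion then shows that $X$ is rational, and hence so is $\Zo(f)$.

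The main technical subtlety is the bookkeeping between the $E$-polynomial coefficients on $Z(f)$ and the genuine Hodge numbers on the smooth model $X$, and the fact that this passage is controlled precisely by $\LL$-rationality of the singularities; once this is in place via \cref{hodge_number_lemma}, the only non-combinatorial input is the classical Castelnuovo criterion.
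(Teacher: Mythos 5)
Your proof is correct and follows essentially the same route as the paper: the combinatorial Hodge-number computation (\cref{hdgNumberCor} together with \cref{hodge_number_lemma}) gives $q=h^{1,0}=0$ on a smooth model, and then the fact that a surface with $\kappa=-\infty$ and $q=0$ is rational finishes the three-dimensional case. The paper phrases the last step via the Enriques classification (``ruled over a curve $C$ with $g(C)=h^{1,0}$'') rather than quoting Castelnuovo's criterion, but these are the same argument. Your handling of the two-dimensional case via $\kappa(X)=-\infty\Rightarrow X\simeq\P^1$ is slightly more uniform than the paper's explicit remark about lattice polygons with no interior points, and your bookkeeping of $e^{1,0}$ versus $h^{1,0}$ through the $\LL$-rational singularities is spelled out more carefully than in the paper, but there is no genuine divergence in method.
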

    \begin{proof}
        The $1$ dimensional case is obvious. If $\dim \Delta = 2$ and $\Delta$ have no interior lattice points, then it is equivalent to a polytope contained in $2\Delta_2$ and hence rational. If $\dim\Delta = 3$, $Z(f)$ is (after a resolution of singularities) a ruled surface. If $Z(f)$ is birational to a surface $C\times \P^1$ then $h^{1,0}(Z(f))=g$ for $g$ the genus of $C$. By \cref{hdgNumberCor}, we have $g=0$, so $Z(f)$ is a rational surface.
    \end{proof}

    \begin{theorem} \label{dim4PolytopeTheorem}
    Let $\delta\subset\Delta\subset M_\R$ be a lattice polytopes with $\dim\Delta \leq 4$ and $\delta$ non-stably rational, $\delta\not\subset \partial\Delta$. Then $\Delta$ is not stably rational.
    \end{theorem}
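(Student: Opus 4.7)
The plan is to apply \cref{tropicalVolume} to a regular integral polyhedral subdivision $\PP$ of $\Delta$ containing $\delta$ as a cell, using the rationality criterion \cref{ratTheorem} to control the cells of the subdivision.

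First I reduce to $\dim\delta\leq 3$. If $\dim\delta=4$, then $\delta$ is a full-dimensional lattice polytope inside the $4$-dimensional $\Delta$ meeting its interior, forcing $\delta=\Delta$, and the conclusion is immediate. So assume $\dim\delta\leq 3$. Next I observe that $\kappa(\delta)\geq 0$: by \cref{ratTheorem}, every lattice polytope of dimension at most $3$ with $\kappa=-\infty$ is rational, hence stably rational, so $\delta$ being non-stably rational forces $\delta^{\FI}\neq\emptyset$.

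I then construct $\PP$ following the proof of \cref{smoothContainment}: take the lower convex envelope of $\phi\colon \Delta\cap M\to\R$ with $\phi(x)=\min_{p\in\delta\cap M}|x-p|^2$. Since $\phi$ vanishes exactly on $\delta$, the polytope $\delta$ is a cell of the resulting regular integral subdivision $\PP$, and the other cells decompose the complement into pieces that shrink toward $\delta$, with the cells adjacent to $\delta$ being of width one (hence rational). The remaining task is to verify the non-triviality condition of \cref{tropicalVolume}, namely
\begin{equation*}
(-1)^{\dim\Delta}\sum_{\substack{\alpha\in\PP\\ \alpha\not\subset\partial\Delta}}(-1)^{\dim\alpha}\sbb{\Zo(g_\alpha)}\neq \sbb{\Spec k}.
\end{equation*}
For every cell $\alpha\in\PP\setminus\{\delta\}$ with $\alpha\not\subset\partial\Delta$ and $\dim\alpha\leq 3$, \cref{ratTheorem} applied to $\alpha$ gives $\sbb{\Zo(g_\alpha)}=\sbb{\Spec k}$ whenever $\kappa(\alpha)=-\infty$, so these contribute only to the trivial part of the sum. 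The non-trivial contributions are from $\delta$ itself, from other $\leq 3$-dimensional cells with $\kappa\geq 0$, and from the $4$-dimensional cells of $\PP$.

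The main obstacle — and where I expect the real work to lie — is ruling out cancellation between $\sbb{\Zo(g_\delta)}$ and the contributions of these other non-rational cells. For cells with interior lattice points, \cref{CYvariation} and its successor corollary yield strong variation of stable birational type, ensuring such cells do not cancel with $\delta$. For the delicate remaining cases, namely cells of positive Kodaira dimension without interior lattice points and the $4$-dimensional cells of $\PP$, I expect to compare the Hodge invariants $h^{p,0}$ computed combinatorially via \cref{hodge_comp}; these are stable birational invariants and severely restrict which cells can be stably birational to $\delta$. If cancellation with a specific cell still threatens, one refines $\PP$ by a pulling refinement (\cref{pullingSubDivEx}) near the offending cell so that it is further decomposed into rational pieces, iterating until $\delta$ is isolated. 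Once no cancellation remains, \cref{tropicalVolume} produces the desired obstruction and concludes that $\Delta$ is not stably rational.
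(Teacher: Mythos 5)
Your proposal contains a genuine error and then heads down a much harder road than the paper, precisely because the error throws away the easy half of the problem.

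First, the reduction to $\dim\delta\leq 3$ is false: a $4$-dimensional subpolytope of a $4$-dimensional polytope need not equal the whole polytope (take a half of a cube), so the claim ``$\dim\delta=4$ forces $\delta=\Delta$'' does not hold. In fact, the case $\dim\delta=4$ with $\delta\subsetneq\Delta$ is the substantive case, and the paper reduces \emph{to} it, not away from it.

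Second, and more importantly, once you observe $\kappa(\delta)\geq 0$, i.e.\ $\delta^{\FI}\neq\emptyset$, you are finished, because $\delta\subset\Delta$ gives $\delta^{\FI}\subset\Delta^{\FI}$ (used in the proof of \cref{kodaira_fine_interior}), so $\Delta^{\FI}\neq\emptyset$, hence $\kappa(\Delta)\geq 0$, hence $\Delta$ is not stably rational and no subdivision is needed at all. You derive $\kappa(\delta)\geq 0$ and then proceed as if this were an obstacle rather than the endpoint. The paper's argument is exactly the contrapositive: it reduces to the case $\Delta^{\FI}=\emptyset$ (if not, $\Delta$ is already non-stably rational). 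Then every cell of the subdivision has empty Fine interior, so \cref{ratTheorem}/\cref{dim3_rational} makes \emph{every} cell of dimension $\leq 3$ rational, and the only non-stably rational cells are $4$-dimensional. Those all enter formula \eqref{volumeFormula} with the \emph{same} sign $(-1)^4=+1$, so no cancellation against $\delta$ is possible, and \cref{tropicalVolume} applies directly. Your remaining discussion --- width-one claims about adjacent cells (not true for this subdivision in general), Hodge-number comparisons, and iterated pulling refinements --- is an admitted sketch attempting to control a cancellation problem that disappears once you make the Fine-interior reduction; it is not a proof.
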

    \begin{proof}
        If $\delta^{\FI}\neq \emptyset$ then $\Delta^{\FI}\neq \emptyset$ so we may assume that $\delta^{\FI}$ is empty. Consider the regular subdivision $\PP$ given by the lower convex envelope of
        \begin{align*}
            \phi \colon \Delta\cap M \to \R,~~ x \mapsto \min\sett{|x-p|^2}{p\in \delta}.
        \end{align*}

        Since $\phi=0$ on $\delta$ and non-zero everywhere else, we have $\delta\in \PP$. Furthermore, by \cref{dim3_rational}, the only non-stably rational polytopes appearing in $\PP$ are of dimension 4. This means that

        \begin{equation*}
            (-1)^{\dim{\Delta}}\sum_{\substack{\alpha\in\PP \\ \alpha\centernot\subset \partial\Delta}} (-1)^{\dim{\alpha}}\sbb{\Zo(g_\alpha)} \neq \sbb{\Spec{k}}
        \end{equation*}
        so $\Delta$ is not stably rational by \cref{tropicalVolume}.
    \end{proof}

\section{Applications} \label{section5}
\subsection{Products of projective spaces}
We start by showing how to use $\Delta_{\operatorname{HPT}}$ from \cref{HPTexample} to prove that a very general $(2,3)$ divisor in $\P^3\times \P^4$ is not stably rational. A priori, the fact that a very general $(2,2)$ divisor in $\P^3\times \P^2$ is not stably rational implies that a very general $(2,4)$ in $\P^3\times \P^4$ is not stably rational \cite[Theorem 4.5]{NO20a}. The fact that $\Delta_{\operatorname{HPT}}$ is special allows us to increase dimension without increasing the degree. The idea is to embed $\Delta_{\operatorname{HPT}}$ into $\R^7$ and construct a ''double cone '' over it (cf. \cref{fig:subdivision}). We obtain the following result.
\begin{theorem} \label{23Divisor}
A very general hypersurface of bidegree $(2,3)$ in $\P^3\times \P^4$ or $(3,3)$ in $\P^4\times \P^4$ is not stably rational.
\end{theorem}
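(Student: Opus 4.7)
The plan is to apply Proposition~\ref{higherDimHPTProp} with base polytope $\delta = \Delta_{\operatorname{HPT}}$, which by Example~\ref{HPTexample} is a $5$-dimensional non-stably rational lattice polytope satisfying condition~(M). For each bidegree, the task reduces to exhibiting an explicit $\delta' = \Delta_{\operatorname{HPT}}(v_1^+, v_1^-, \ldots, v_r^+, v_r^-)$ of dimension $5+r$ inside the Newton polytope of a very general divisor of that bidegree, with $r=2$ for $(2,3)$ and $r=3$ for $(3,3)$.

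For the $(2,3)$ case, the target Newton polytope is $\Delta = 2\Delta_3 \times 3\Delta_4 \subset \R^7$. The naive embedding of the double cone places some $v_k^-$ at a coordinate with entry $-1$, violating the nonnegativity required by $\Delta$. We would instead embed $\Delta_{\operatorname{HPT}}$ into $\Z^7$ via a saturated $\Z$-linear map $\iota$ that pairs two of the $\P^4$-coordinates with $\P^3$-coordinates, concretely $f_1 = e_4$, $f_2 = e_5$, $f_3 = e_1 + e_6$, $f_4 = e_2 + e_7$, $f_5 = e_3$. The relations $e_1 + e_6, e_2 + e_7 \in \iota(\Z^5)$ mean that in the quotient $\Z^7/\iota(\Z^5) \cong \Z^2$ we have $[e_1] = -[e_6]$ and $[e_2] = -[e_7]$, so the cone vertices may be chosen as $v_1^+ = e_1$, $v_1^- = e_6$, $v_2^+ = e_2$, $v_2^- = e_7$, each a lattice point of $\Delta$ whose projections give a basis of $\Z^2$. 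A direct computation verifies that each of the six HPT vertices under $\iota$ satisfies both $\sum_{i=1}^{3} a_i \leq 2$ and $\sum_{i=4}^{7} a_i \leq 3$, and that $\delta'$ is $7$-dimensional. After an affine unimodular transformation of $\R^7$ sending $\iota(\R^5)$ to $\R^5 \times \{0\}$, this $\delta'$ is literally the construction of Example~\ref{exampleHPThigherdim}, so Proposition~\ref{higherDimHPTProp} yields the non-stable rationality of $\Delta$, hence of a very general $(2,3)$ divisor.

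For the $(3,3)$ case, the target $3\Delta_4 \times 3\Delta_4 \subset \R^8$ requires three cone pairs; an analogous embedding $f_1 = e_1$, $f_2 = e_5$, $f_3 = e_2 + e_6$, $f_4 = e_4 + e_7$, $f_5 = e_3 + e_8$ with cone vertices $v_1^\pm \in \{e_2, e_6\}$, $v_2^\pm \in \{e_3, e_8\}$, $v_3^\pm \in \{e_4, e_7\}$ works by exactly the same principle.

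The hard part will be balancing the pairings in $\iota$: each pairing $e_i + e_j \in \iota(\Z^5)$ is what makes a cone direction land in the positive orthant of the target, but it simultaneously inflates the $\P^3$- or $\P^4$-degree of the H
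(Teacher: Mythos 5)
Your argument is essentially the paper's: both proofs exhibit a double cone over $\Delta_{\operatorname{HPT}}$ of full dimension inside the target Newton polytope and invoke \cref{higherDimHPTProp}. The small cosmetic difference is that you embed $\Delta_{\operatorname{HPT}}$ directly into the target $\Z^7$ (resp.\ $\Z^8$) via a saturated inclusion $\iota$ with relations like $e_1+e_6,\,e_2+e_7\in\iota(\Z^5)$ chosen so that the cone vertices $e_1,e_6,e_2,e_7$ already lie in the nonnegative orthant, whereas the paper builds $\Delta_{\operatorname{HPT}}(v_1^\pm,v_2^\pm)$ in source coordinates $\R^5\times\R^2$ (with $v_k^-$ having a $-1$ entry) and then applies an explicit unimodular affine transformation to land inside $2\Delta_3\times 3\Delta_4$. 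These are the same construction viewed from opposite ends of the coordinate change. For $(3,3)$, the paper does not repeat the construction but deduces it from the $(2,3)$ case via \cite[Theorem 4.5]{NO20a}; your direct cone construction in $\R^8$ with $\iota(e_3)=e_2+e_6$, $\iota(e_4)=e_4+e_7$, $\iota(e_5)=e_3+e_8$ also works (all images of HPT-vertices and all cone vertices $e_2,e_3,e_4,e_6,e_7,e_8$ satisfy both degree bounds $\leq 3$), and is arguably more self-contained. Two small points: you should state explicitly that the cone vertices together with $\iota(\Delta_{\operatorname{HPT}})$ span all of $\R^7$ (resp.\ $\R^8$), so that $\dim\delta'=\dim\Delta$ as required by \cref{higherDimHPTProp}; and your write-up is cut off mid-sentence in the final paragraph, so the discussion of the degree-inflation trade-off should be finished or removed, though the concrete choices you already gave do satisfy all the required inequalities.
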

\begin{proof}
Consider the polytope
\begin{align*}
\Delta=\Conv \{0, 2e_1,2e_2,e_2+2e_3,e_1+2e_4,e_1+e_2+2e_5, \\
            e_1+e_4+e_6,e_1-e_6,e_1+e_5+e_7,e_1-e_7\}.
\end{align*}
Note that the first 6 points define $\Delta_{\operatorname{HPT}}$. It is properly contained in $\Delta$, explicitly realized as the intersection of $\Delta$ with the hyperplanes $e_6^\vee=e_7^\vee=0$ (standard dual basis). The polytope $\Delta$ is not stably rational, and any polytope containing it is also not stably rational by \cref{higherDimHPTProp}. It suffices to show that this is unimodular equivalent to a polytope contained in $2\Delta_3\times 3\Delta_4$. After the unimodular affine transformation given by
\begin{align*}
e_1\mapsto e_5,~e_2\mapsto e_4,~e_3\mapsto e_3,~e_4\mapsto e_2-e_5+e_6,\\
e_5\mapsto e_1-e_5+e_7,~e_6\mapsto e_5-e_6,~e_7\mapsto e_5-e_7
\end{align*}
followed by a translation by $e_1$ we find the above to be equivalent to
\begin{align*}
P=\Conv \{&e_5,3e_5,e_2+2e_5,e_5+e_6,e_1+2e_5,e_5+e_7,\\
           &2e_4+e_5,2e_2+2e_6,2e_1+e_4+2e_7,2e_3+e_4+e_5 \}
\end{align*}
The polytope $P$ is contained in $2\Delta_3\times 3\Delta_4$, so the latter is not stably rational by \cref{higherDimHPTProp}. It follows from \cite[Theorem 4.5]{NO20a} that $3\Delta_4\times 3\Delta_4$ is also not stably rational.
\end{proof}

\subsection{Hypersurfaces in projective space}
One expects that a very general hypersurface of degree $d\geq 3$ in $\P^{N+1}$ is not stably rational when $N\geq 3$. The best bound where this is known is due to Schreieder. He proved that a very general hypersurface of degree $d\geq \log_2(N)+2$ in $\P^{N+1}$ is not stably rational \cite{Sch19a}. Except for the quartic fivefold, this covers all known cases of non-stably rational hypersurfaces in projective space. The proof is based on an explicit construction of a special hypersurface that does not admit a decomposition of the diagonal. These hypersurfaces are special in the sense that they have small Newton polytopes compared to a general hypersurface with the same degree and dimension. We use the special nature of these to improve the bound given in \cite{Sch19a} when the ground field is $\C$.

\begin{theorem} \label{hypersurfacesTheorem}
Let $N\geq 3$ and $X\subset \P^{N+1}$ be a very general hypersurface of degree $d\geq n+2$ where $N=n+r$ and $2^{n-1}-2 \leq r \leq 2^n-2+2^{n-2}(n-1)$. Then $X$ is not stably rational.
\end{theorem}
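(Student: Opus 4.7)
The plan is to derive non-stable rationality of $d\Delta_{N+1}$ (the Newton polytope of a very general degree-$d$ hypersurface in $\P^{N+1}$) from the explicit non-stably rational hypersurfaces of Schreieder \cite{Sch19a}, combined with the cone extension of \cref{higherDimHPTProp}.

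\textbf{Base case.} Set $d = n+2$ and $N_0 = n + 2^{n-1} - 2$. Schreieder's construction provides a very general non-stably rational hypersurface of degree $d$ in $\P^{N_0+1}$ via an explicit defining polynomial. Let $\Delta_{\mathrm{Sch}} \subset d\Delta_{N_0+1}$ denote its Newton polytope, a non-stably rational lattice polytope of dimension $N_0+1$. By direct inspection of Schreieder's monomials, one checks that $\Delta_{\mathrm{Sch}}$ satisfies condition $(M)$ of \cref{conditionMDef}; if a single ray fails, we adjoin a lattice point of $d\Delta_{N_0+1}$ supplying the missing monomial, and the enlargement preserves non-stable rationality by \cref{monomialDegenCor} applied within dimension $N_0+1$.

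\textbf{Cone extension.} Set $s := N - N_0$; by hypothesis $0\leq s \leq 2^{n-2}(n+1)$. Following \cref{exampleHPThigherdim}, form
\[
  \widetilde{\Delta} \;=\; \Delta_{\mathrm{Sch}}(v_1^+, v_1^-, \dotsc, v_s^+, v_s^-)\;\subset\;\R^{N+1},
\]
with each $v_k^{\pm}$ projecting to $\pm e_k$ on the last $s$ coordinates and to a suitably chosen lattice point $w_k\in \Delta_{\mathrm{Sch}}$ on the first $N_0+1$. By \cref{higherDimHPTProp}, $\widetilde{\Delta}$ is not stably rational, and so is any lattice polytope of the same dimension $N+1$ containing it.

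\textbf{Embedding into $d\Delta_{N+1}$.} The crux is to exhibit a unimodular affine map sending $\widetilde{\Delta}$ into $d\Delta_{N+1}$, in the spirit of the substitution carried out in the proof of \cref{23Divisor}. Taking $v_k^\pm = w_k \pm e_{N_0+1+k}$ and applying a shear $e_{N_0+1+k}\mapsto e_{N_0+1+k}+e_{\alpha_k}$ for suitable indices $\alpha_k$, followed by a global translation, every vertex of $\widetilde{\Delta}$ acquires non-negative coordinates whose sum is at most $d$. An accounting of the monomials in $d\Delta_{N+1}$ that remain after accommodating $\Delta_{\mathrm{Sch}}$ shows that exactly $s\leq 2^{n-1}+2^{n-2}(n-1)=2^{n-2}(n+1)$ such cone pairs fit, which is precisely the upper bound $r\leq 2^n - 2 + 2^{n-2}(n-1)$ in the statement.

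\textbf{Conclusion and main obstacle.} Combining the previous steps, $d\Delta_{N+1}$ contains $\widetilde{\Delta}$ (up to unimodular equivalence) as a full-dimensional non-stably rational subpolytope, so by \cref{higherDimHPTProp} the polytope $d\Delta_{N+1}$ is not stably rational, whence a very general degree $d\geq n+2$ hypersurface in $\P^{N+1}$ is not stably rational. The principal difficulty is the embedding step: producing an explicit unimodular affine inclusion $\widetilde{\Delta}\hookrightarrow d\Delta_{N+1}$ for each admissible $s$ and extracting the sharp count $2^{n-2}(n+1)$. This requires precise combinatorial control over the lattice points of Schreieder's polytope, a judicious choice of the auxiliary points $w_k$ and the shear directions $\alpha_k$, generalising the coordinate substitution used for $\Delta_{\mathrm{HPT}}$ in the proof of \cref{23Divisor}.
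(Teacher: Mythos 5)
Your strategy differs from the paper's in a way that creates a fatal gap in the cone-counting step.

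\textbf{The cone count is infeasible from your chosen base.} You anchor at the lower endpoint $r = 2^{n-1}-2$, i.e., you take $\Delta_{\mathrm{Sch}}\subset\R^{N_0+1}$ with $N_0 = n+2^{n-1}-2$, and then claim that the doubled-cone construction of \cref{exampleHPThigherdim} can add $s$ new coordinate directions for every $s\leq 2^{n-2}(n+1)$ while staying inside $d\Delta_{N+1}$ after a shear. But the number of admissible cone steps is not free: each step consumes ``degree slack'' at one of the low-degree vertices of the base polytope (each cone and shear raises the degree of the chosen vertex by $2$, and the resulting vertex must still have degree $\leq d$). In the paper this slack is computed precisely: starting instead from the \emph{largest} Schreieder polytope $\Delta(n)$ at $r = 2^{n}-2$, which has $|S'| = 2^n-n-1$ low-degree vertices of degree $|\epsilon|+2$, \cref{sumLemma} gives exactly $\sum_{\epsilon\in S'}\lfloor(n-|\epsilon|)/2\rfloor = 2^{n-2}(n-1)$ total cone steps. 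Your base polytope at $r=2^{n-1}-2$ has roughly half as many $\epsilon$-vertices, hence strictly less cone slack, yet you are asking for $2^{n-2}(n+1) = 2^{n-1}+2^{n-2}(n-1)$ cone steps --- substantially \emph{more} than even the larger polytope can support. The ``accounting'' you defer is therefore not a technical loose end; the numbers genuinely do not close, and the approach cannot reach the stated upper bound on $r$.

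\textbf{How the paper proceeds instead.} The paper anchors at $r = 2^n-2$, proves that the associated polytope $\Delta(n)$ satisfies condition (M) via an explicit class-group computation (\cref{conditionMprop}), and then applies the cone construction to cover $2^n-2 \leq r \leq 2^n-2+2^{n-2}(n-1)$. The lower range $2^{n-1}-2\leq r < 2^n-2$ is handled separately (via \cref{lowerRvalueRemark} and/or by passing to a smaller $n'$), \emph{not} by coning upward from the smallest base. This split is essential because the cone budget is maximized at $r=2^n-2$.

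\textbf{A secondary gap: the (M)-enlargement is circular.} To patch a polytope that fails condition (M), you propose to adjoin lattice points and justify preservation of non-stable rationality via \cref{monomialDegenCor} --- but that proposition requires the inner polytope to already satisfy (M), which is exactly what is being arranged. The paper avoids this by enlarging the \emph{polynomial} (adding a general $G$ supported on carefully chosen monomials) and invoking Schreieder's specialization result \cite[Prop.~3.1]{Sch19a} to propagate non-stable rationality to the enlarged Newton polytope; condition (M) for the resulting $\Delta(n)$ is then verified directly.
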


The first new case we obtain is the non-stable rationality of a quintic hypersurface in $\P^{N+1}$ for $N=10,11,12,13$. Furthermore, very general sextic hypersurfaces in $\P^{N+1}$ are not stably rational for $19\leq N \leq 30$.

\begin{figure}[!ht]
    \centering
    \begin{tikzpicture}[scale=0.58, transform shape]
    \tikzmath{
    \xmax = 19;
    \ymax = 5;
    \xmaxp = \xmax+0.5;
    \ymaxp = \ymax+0.5;
    \xmaxpp = \xmax+1;
    \ymaxpp = \ymax+1;
    \xmaxtwo = 2*\xmax - 4;
    \ymaxtwo = 2*\ymax;
    \xmaxtwoplusone = (\xmaxtwo + 2)/2 - 1/3;
    \xmaxtwoplustwo = (\xmaxtwo + 3)/2 - 0.25;
    \xmaxtwoplusthree = (\xmaxtwo + 4)/2 - 0.25;
}
\draw[->] (0,0) -- (\xmaxp,0);
\draw[->] (0,0) -- (0,\ymaxp);
\node at (\xmaxpp,0) {$N$};
\node at (0,\ymaxpp) {$d$};
\foreach \x in {1,...,\xmaxtwo}{
    \tikzmath{\xx = \x/2;}
    \draw[fill=black] (\xx,0) circle (1.5pt);
}
\foreach \x in {1,...,\xmaxtwo}{
    \tikzmath{
        int \xx;
        \xx=\x + 2;
        \xxx = \x/2;
        }
    \draw (\xxx,-0.5) node {\xx};
}
\foreach \y in {1,...,\ymax}{
    \draw[fill=black] (0,\y) circle (1.5pt);
}
\foreach \y in {1,...,\ymax}{
    \tikzmath{
        int \yy;
        \yy=\y + 2;
        }
    \draw (-0.5,\y) node {\yy};
}
\node[diamond, mark size=2pt,draw=black, fill=yellow, line width=1pt] at (1/2,1) {};
\foreach \x in {2,...,\xmaxtwo}{
    \tikzmath{
        int \xx;
        \xx=\x + 2;
        \xxx = \x/2;
        }
    \node[circle, mark size=2.5pt,draw=black, line width=1pt] at (\xxx,1) {};
}

\draw[fill=red] (1/2,2) circle (3pt);
\draw[fill=red] (1,2) circle (3pt);
\node[circle, mark size=2.5pt,draw=black, fill=red, line width=1pt] at (1/2,2) {};
\node[circle, mark size=2.5pt,draw=black, fill=red, line width=1pt] at (1,2) {};
\node[star, mark size=2.5pt,draw=black, fill=green, line width=1pt] at (3/2,2) {};
\foreach \x in {4,...,\xmaxtwo}{
    \tikzmath{
        int \xx;
        \xx=\x + 2;
        \xxx = \x/2;
        }
    \node[circle, mark size=2.5pt,draw=black, line width=1pt] at (\xxx,2) {};
}

\foreach \x in {1,...,7}{
    \tikzmath{
        int \xx;
        \xx=\x + 2;
        \xxx = \x/2;
        }
    \node[circle, mark size=2.5pt,draw=black, fill=red, line width=1pt] at (\xxx,3) {};
}
\foreach \x in {8,...,11}{
    \tikzmath{
        int \xx;
        \xx=\x + 2;
        \xxx = \x/2;
        }
    \node[rectangle, scale=1.3,draw=black, fill=blue, line width=1pt] at (\xxx,3) {};
}
\foreach \x in {12,...,\xmaxtwo}{
    \tikzmath{
        int \xx;
        \xx=\x + 2;
        \xxx = \x/2;
        }
    \node[circle, mark size=2.5pt,draw=black, line width=1pt] at (\xxx,3) {};
}

\foreach \x in {1,...,16}{
    \tikzmath{
        int \xx;
        \xx=\x + 2;
        \xxx = \x/2;
        }
    \node[circle, mark size=2.5pt,draw=black, fill=red, line width=1pt] at (\xxx,4) {};
}
\foreach \x in {17,...,28}{
    \tikzmath{
        int \xx;
        \xx=\x + 2;
        \xxx = \x/2;
        }
    \node[rectangle, scale=1.3,draw=black, fill=blue, line width=1pt] at (\xxx,4) {};
}
\foreach \x in {29,...,\xmaxtwo}{
    \tikzmath{
        int \xx;
        \xx=\x + 2;
        \xxx = \x/2;
        }
    \node[circle, mark size=2.5pt,draw=black, line width=1pt] at (\xxx,4) {};
}

\foreach \x in {1,...,33}{
    \tikzmath{
        int \xx;
        \xx=\x + 2;
        \xxx = \x/2;
        }
    \node[circle, mark size=2.5pt,draw=black, fill=red, line width=1pt] at (\xxx,5) {};
}
\foreach \x in {34,...,\xmaxtwo}{
    \tikzmath{
        int \xx;
        \xx=\x + 2;
        \xxx = \x/2;
        }
    \node[rectangle, scale=1.3,draw=black, fill=blue, line width=1pt] at (\xxx,5) {};
}

\node[scale=1.5] at (\xmaxtwoplusone,5) {$\cdots$}; 
\node[scale=1.5] at (\xmaxtwoplusone,4) {$\cdots$}; 
\node[scale=1.5] at (\xmaxtwoplusone,3) {$\cdots$}; 
\node[scale=1.5] at (\xmaxtwoplusone,2) {$\cdots$}; 
\node[scale=1.5] at (\xmaxtwoplusone,1) {$\cdots$}; 
\node[scale=1.5] at (\xmaxtwoplusone,-1/2) {$\cdots$}; 

\node[rectangle, scale=1.3,draw=black, fill=blue, line width=1pt] at (\xmaxtwoplustwo,5) {};

\node[circle, mark size=2.5pt,draw=black, line width=1pt] at (\xmaxtwoplustwo,4) {};
\node[circle, mark size=2.5pt,draw=black, line width=1pt] at (\xmaxtwoplustwo,3) {};
\node[circle, mark size=2.5pt,draw=black, line width=1pt] at (\xmaxtwoplustwo,2) {};
\node[circle, mark size=2.5pt,draw=black, line width=1pt] at (\xmaxtwoplustwo,1) {};
\node at (\xmaxtwoplustwo,-1/2) {67};

\node[circle, mark size=2.5pt,draw=black, line width=1pt] at (\xmaxtwoplusthree,5) {};
\node[circle, mark size=2.5pt,draw=black, line width=1pt] at (\xmaxtwoplusthree,4) {};
\node[circle, mark size=2.5pt,draw=black, line width=1pt] at (\xmaxtwoplusthree,3) {};
\node[circle, mark size=2.5pt,draw=black, line width=1pt] at (\xmaxtwoplusthree,2) {};
\node[circle, mark size=2.5pt,draw=black, line width=1pt] at (\xmaxtwoplusthree,1) {};
\node at (\xmaxtwoplusthree,-1/2) {68};
\end{tikzpicture} \caption{\protect\tikz\protect\node[circle,scale=0.6,draw=black, fill=red, line width=1pt] {}; not stably rational by \cite{Sch19a}. \protect\tikz\protect\node[rectangle,scale=0.6,draw=black, fill=blue, line width=1pt] {}; not stably rational by \cref{hypersurfacesTheorem}. \protect\tikz\protect\node[diamond,scale=0.5,draw=black, fill=yellow, line width=1pt] {}; is irrational by \cite{CG72}. \protect\tikz\protect\node[star,scale=0.5,draw=black, fill=green, line width=1pt] {}; is not stably rational by \cite{NO20a}.}
    \label{fig:irrationalHypersurfaces}
\end{figure}

\begin{remark} \label{degreeRemark}
Let $r=2^n-2+2^{n-2}(n-1)$ with $n\geq 3$ and write $N=n'+r'$ with $2^{n'-1}-2\leq r' \leq 2^{n'}-2$. 
We claim that
\begin{equation*}
\log_2(n+3) -1 > n'-n \geq \log_2(n+3)-2.
\end{equation*}
By the property of $r'$, we have
\begin{equation*}
2^{n'-1}+n' \leq 2^{n-2}(n+3)+n \leq 2^{n'}+n'.
\end{equation*}
The left inequality gives
\begin{align*}
n+3 \geq 2^{n'-n+1} + 2^{2-n}(n'-n) > 2^{n'-n+1}
\end{align*}
so $\log_2(n+3)-1>n'-n$. The other inequality implies
\begin{align*}
n+3 \leq 2^{n'-n+2} + 2^{2-n}(n'-n).
\end{align*}
Since $n'-n < \log_2(n+3)-1$ we have $2^{2-n}(n'-n)<1$ so $n+3\geq 2^{n'-n+2}$, hence $n'-n\geq \log_2(n+3)-2$. This shows that the difference between our bound and the one in \cite{Sch19a} grows logarithmically in $n$.
\end{remark}

Let $N\geq 3$ and let $r\geq 1, n\geq 2$ be integers such that $2^{n-1}-2 \geq r \geq 2^n-2$ and $N=n+r$. Let $d\geq n+2$. For $\epsilon\in \set{0,1}^n$ let $|\epsilon |=\sum_{i=1}^n \epsilon_i$ and consider a bijection
\begin{align*}
\rho \colon \set{0,1}^n &\to \set{1,\dots, 2^n}
\end{align*}
such that $\rho^{-1}(1)=(0,\dots, 0)$ and $\rho^{-1}(2)=(1,\dots, 1)$. Let $S=\rho^{-1}(\set{1,\dots, r+1})$. Define polynomials
\begin{align*}
R_{\operatorname{ev}} &= -x_0^{d-n}x_1\cdots x_n+ \sum_{\epsilon\in S} x_0^{d-2-|\epsilon|}x_1^{\epsilon_1}\cdots x_n^{\epsilon_n} x_{n+\rho(\epsilon)}^2 \\
R_{\operatorname{odd}} &=-x_0^{d-n+1}x_2\cdots x_n + \sum_{\epsilon\in S} x_0^{d-2-|\epsilon| + (-1)^{1-\epsilon_1}}x_1^{1-\epsilon_1}x_2^{\epsilon_2}\cdots x_n^{\epsilon_n} x_{n+\rho(\epsilon)}^2.
\end{align*}
The polynomials $F_d$ defined as
\begin{align*}
F_d = \begin{cases}
t^2\left ( \sum_{i=0}^n x_i^{d/2} \right )^2 + R_{\operatorname{ev}} & d ~\text{even} \\
t^2\left ( x_1\sum_{i=0}^n x_i^{(d-1)/2} \right )^2 + R_{\operatorname{odd}}& d ~\text{odd}.
\end{cases}
\end{align*}
define hypersurfaces $X_d$ that are not stably rational for very general $t\in \C$ by \cite{Sch19a}. By \cite[Prop. 3.1]{Sch19a} any hypersurface in the given projective space degenerating to $X$ is also not stably rational. This means that any polytope contained in $d\Delta_{N+1}$ and contains the Newton polytope of $F_d$ is not stably rational. When $d$ is even, the Newton polytope of $F_d$ is given by the convex hull of the following vectors (here we are considering the affine open $x_1\neq 0$)
\begin{align*}
0, de_1, \dots, de_n,~~(d-n)e_1+\sum_{i=2}^n\epsilon_ie_i,~~(d-2-|\epsilon|)e_1 + \sum_{i=2}^n\epsilon_ie_i+ 2e_{n+\rho(\epsilon)}, \epsilon\in S
\end{align*}
and when $d$ is odd, it is given by
\begin{align*}
&0, de_1, \dots, de_n,~~(d-n+1)e_1+\sum_{i=2}^n\epsilon_ie_i \\
&(d-2-|\epsilon|+(-1)^{1-\epsilon_1})e_1 +\sum_{i=2}^n\epsilon_ie_i + 2e_{n+\rho(\epsilon)}, \epsilon\in S.
\end{align*}
We will modify these slightly. First, we restrict to the case where $r=2^n-2$. It is in this case that we get the strongest bounds. We choose our bijection such that $\rho^{-1}(2^n)=(0,1,\dots,1)$ so $S=\rho^{-1}(1,\dots, 2^n-1)$ consists of all $n$-tuples except $(0,1,\dots,1)$. We consider polynomials of the form $F_d+G$ where $G$ is general in the variables $x_0,\dots, x_n$ and $x_{n+\rho(\epsilon)}$ for $\epsilon$ satisfying $d-2-|\epsilon|\leq 2$ in the even case and $d-2-|\epsilon|+(-1)^{1-\epsilon_1}\leq 2$ in the odd case. The Newton polytope (again in the open affine $x_1\neq 0$) contains the Newton polytope of $F_d$, so by \cite[Prop. 3.1]{Sch19a} these are not stably rational. After possibly changing the order of rows and columns and performing a change of coordinates in the torus given by
\begin{equation*}
x_{n+\rho(\epsilon)} \mapsto \begin{cases}
x_0^{-\floor{\frac{d-2-|\epsilon|}{2}}}x_{n+\rho(\epsilon)} &d~\text{odd} \\
 x_0^{-\floor{\frac{d-2-|\epsilon|+(-1)^{1-\epsilon_1}}{2}}}x_{n+\rho(\epsilon)} & d~\text{even}
\end{cases}
\end{equation*}
we obtain non-stably rational polytopes $\Delta(n)\subset \R^{n+2^n-1}=\R^{N+1}$ given by the convex hull of the following vectors.
\begin{align*}
&0, de_1,\dots, de_{2n},\epsilon_1e_1+\dots \epsilon_n e_n + 2e_{2n+\rho'(\epsilon)},~\epsilon\in S'
\end{align*}
where $|\epsilon|\leq n-2$ and $\rho'\colon \set{1,\dots, 2^n-n-1}\to S'$ is a bijection to the set $S'$ of such $\epsilon$.

\begin{proposition} \label{conditionMprop}
    The polytopes $\Delta(n)$ satisfies condition $(M)$.
\end{proposition}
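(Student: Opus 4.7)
The plan is to verify condition $(M)$ by computing the Picard/class-group structure of $\P_k(\Delta(n))$ directly and then, for each ray, exhibiting an explicit power of a single Cox-ring variable that represents $[D_\Delta]$.

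First, I would analyse $\Delta(n)$ as an $(N+1)$-simplex: its $N+2$ vertices are $v_0=0$, the edge vertices $de_j$ for $1\leq j\leq 2n$, and the shifted vertices $v_\epsilon = \sum_{i}\epsilon_i e_i + 2e_{2n+\rho'(\epsilon)}$ for $\epsilon\in S'$. Rays of $\Sigma^{\Delta(n)}(1)$ correspond bijectively to facets, hence to these vertices. By reading off the supporting hyperplane equations of the facets opposite each vertex, I would identify the primitive inward generators: $u_{\rho_{de_j}}=e_j^\vee$ for $j>n$ and $u_{\rho_{v_\epsilon}}=e_{2n+\rho'(\epsilon)}^\vee$; the facet $F_{de_j}$ for $j\leq n$ has equation $2x_j = \sum_{\epsilon_j=1} x_{2n+\rho'(\epsilon)}$, giving $u_{\rho_{de_j}} = 2e_j^\vee - \sum_{\epsilon_j=1}e_{2n+\rho'(\epsilon)}^\vee$; and $F_{v_0}$ has equation $2\sum_{j=1}^{2n} x_j + \sum_{\epsilon\in S'}(d-|\epsilon|)x_{2n+\rho'(\epsilon)}=2d$.

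Next, the relations $\operatorname{div}(\chi^{e_j}) = \sum_\rho\langle e_j,u_\rho\rangle D_\rho\sim 0$ for $j=1,\dots,N+1$ determine $\operatorname{Cl}(\P(\Delta(n)))$. A straightforward computation yields
\[
[D_{de_j}] = 2[D_{v_0}] \text{ for } j>n,\qquad 2\bigl([D_{de_j}] - [D_{v_0}]\bigr) = 0 \text{ for } j\leq n,
\]
so the differences $\tau_j := [D_{de_j}] - [D_{v_0}]$ are $2$-torsion, and one finds $[D_{v_\epsilon}] = d[D_{v_0}] + \sum_{i:\epsilon_i=1}\tau_i$. From $a_{\rho_v} = -\min_{m\in\Delta(n)}\langle m,u_{\rho_v}\rangle$ one sees $a_{\rho_{v_0}}=2d$ and $a_{\rho_v}=0$ for all other $v$ (since each such facet contains the origin), so $D_\Delta = 2d\,D_{v_0}$ and $[D_\Delta] = 2d[D_{v_0}]$ carries no torsion.

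Finally, for each ray I would exhibit an explicit monomial in the Cox ring:
\[
m_{\rho_{v_0}} = x_{\rho_{v_0}}^{2d},\quad m_{\rho_{de_j}} = x_{\rho_{de_j}}^{2d}\ (j\leq n),\quad m_{\rho_{de_j}} = x_{\rho_{de_j}}^d\ (j>n),\quad m_{\rho_{v_\epsilon}} = x_{\rho_{v_\epsilon}}^2.
\]
In each case the exponent was chosen so that the corresponding multiple of $[D_{\rho_v}]$ equals $2d[D_{v_0}]=[D_\Delta]$: the 2-torsion contributions $2d\,\tau_j$ (for $j\leq n$) and $2\sum_i\tau_i$ (for $v_\epsilon$) vanish because $2\tau_j=0$. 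Since $x_{\rho_v}$ appears with positive exponent in $m_{\rho_v}$, we have $D_{\rho_v}\subset Z(m_{\rho_v})$, which is condition $(M)$.

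The main obstacle is the class-group computation — in particular, correctly pinning down the $2$-torsion subgroup $(\Z/2\Z)^n\subset\operatorname{Cl}(\P(\Delta(n)))$ generated by the $\tau_j$, and checking that $[D_\Delta]$ lies entirely in the free part so that the chosen even exponents cleanly annihilate all torsion contributions.
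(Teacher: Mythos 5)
Your computation of $\Cl(\P(\Delta(n)))\simeq(\Z/2\Z)^n\times\Z$, the identification of the $2$-torsion classes $\tau_j$, and the conclusion $[D_\Delta]=2d[D_{v_0}]$ all match the paper's own presentation and are correct. However, there is a genuine gap in the final step: the monomials you exhibit, namely $x_{\rho_{v_0}}^{2d}$, $x_{\rho_{de_j}}^{2d}$, $x_{\rho_{de_j}}^{d}$, and $x_{\rho_{v_\epsilon}}^{2}$, are pure powers and hence not \emph{reduced} (squarefree). Although \cref{conditionMDef} as literally stated only asks for ``a monomial $m_\rho$'', the way condition $(M)$ is actually used forces the monomial to be squarefree: in \cref{monomialDegenThm} one needs a \emph{reduced} monomial $x_I=\prod_{\rho\in I}x_\rho$ so that \cref{monomialDegenToroidal} applies, and the strict toroidality of $\X=Z(gt-x_I)$ there requires the special fiber $Z(x_I)$ to be reduced (the local model $\Spec R[M]/(\chi^m-t^q)$ in the definition of strictly toroidal demands $k[M]/(\chi^m)$ reduced). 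The paper's example on empty simplices confirms this reading: a non-unimodular empty simplex does have pure-power monomials $x_i^{n_i}$ of degree $[D_\Delta]$, yet the paper declares it fails condition $(M)$. So your pure powers do not certify condition $(M)$ in the sense needed.

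The paper instead constructs, for each ray, a \emph{squarefree} monomial of degree $[D_\Delta]$ containing that ray's variable, by working additively in the presentation $\Z^{2^n+n}\to(\Z/2\Z)^n\times\Z$. Starting from the torsion-free class $(d+|\epsilon|)b$ represented by the squarefree combination $f_{2n+\rho'(\epsilon)}+\sum_{i\in I_\epsilon}f_i$, one then adjoins a suitable subset of $f_{n+1},\dots,f_{2n}$ (each contributing $2b$ with no torsion) and possibly $f_{2^n+n}$ (contributing $b$) when $d+|\epsilon|$ is odd, reaching $2db$ by a sum in which each $f_i$ appears at most once. Choosing the $\epsilon$'s and subsets appropriately, every $f_i$ appears in at least one such squarefree sum, which is precisely condition $(M)$ with reduced monomials. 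To repair your argument, you would need to replace your pure powers with squarefree representatives of $[D_\Delta]$ along these lines; the class-group bookkeeping you already did is exactly what is needed to carry this out.
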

\begin{proof}
The proof is based on an explicit computation of the class group of $\P(\Delta(n))$ similar to \cref{HPTexample}.
Note that $\Delta(n)$ is a simplex, so it has exactly $2^n+n$ supporting halfspaces. A direct computation shows that the normal directions to these are given by
\begin{align*}
&2e_i^\vee -\sum_{\epsilon\in S'} \epsilon_i e^\vee_{2n+\rho'(\epsilon)}\geq 0, \quad i=1,\dots,n \\
&e^\vee_i\geq 0, \quad i=n+1,\dots, 2^n+n-1  \\
&-\sum_{i=1}^{2n} 2e^\vee_i - \sum_{\epsilon\in S'} (d-|\epsilon|) e^\vee_{2n+\rho'(\epsilon)}\leq 2d. 
\end{align*}
Here, $e_1,\dots, e_{2^n+n-1}$ is the standard basis on $\Z^{2^n+n-1}$ and $e_i^\vee$ denotes the dual basis. Let $f_1,\dots, f_{2^n+n}$ be the standard basis on $\Z^{2^n+n}$. We have an exact sequence
\begin{equation*}
\begin{tikzcd}
0 \arrow[r] & \Z^{2^n+n-1} \arrow[r] & \Z^{2^n+n} \arrow[r] & \Cl\P(\Delta(n)) \arrow[r] & 0
\end{tikzcd}
\end{equation*}
The first map is given as follows.
\begin{align*}
e_i&\mapsto 2f_i - 2f_{2^n+n}, \quad i=1,\dots, n \\
e_i&\mapsto f_i - 2f_{2^n+n}, \quad i=n+1,\dots, 2n \\
e_{2n+\rho'(\epsilon)} &\mapsto f_{2n+\rho'(\epsilon)} - \sum_{i=1}^n \epsilon_if_i - (d-|\epsilon|)f_{2^n+n}, \quad \epsilon\in S'
\end{align*}
We may identify the class group with $\left (\Z/2\Z\right )^n\times \Z$ where $(0,\dots, 0,2d)$ is the image of the ample divisor class induced by $\Delta(n)$. It is useful to have some notation for the map $\Z^{2^n+n}\to \left (\Z/2\Z\right )^n\times \Z$ that induces an isomorphism with the class group. Let $a_1,\dots, a_n,b$ be a basis for $\left (\Z/2\Z\right )^n\times \Z$. The map is given by
\begin{align*}
\Z^{2^n+n}&\to \left (\Z/2\Z\right )^n\times \Z \\
f_i&\mapsto a_i + b,\quad i=1,\dots, n \\
f_{j}&\mapsto 2b,\quad j=n+1,\dots, 2n \\
f_{2n+\rho'(\epsilon)} &\mapsto \sum_{i=1}^n \epsilon_i a_i + d b ,\quad \epsilon\in S' \\
f_{2^n+n}&\mapsto b.
\end{align*}
It suffices now to see that each $f_i$ occurs in a reduced sum of elements that map to $2db$ via this map. For $\epsilon\in S'$ let $I_\epsilon$ denote the indices $i$ such that $\epsilon_i=1$. Then
\begin{equation*}
f_{2n+\rho'(\epsilon)} + \sum_{i\in I_\epsilon} f_i
\end{equation*}
maps to $(d+|\epsilon|)b$ so contains no torsion part. By adding a suitable subset of $f_{n+1},\dots, f_{2n}$ (and a copy of $f_{2^n+n}$ if $d+|\epsilon|$ is odd), we obtain a reduced sum of elements that correspond to $2db$. All $f_i$ occur in one of these, so $\Delta(n)$ satisfies condition (M).
\end{proof}

\begin{remark} \label{lowerRvalueRemark}
By \cite{Sch19a} the polytopes in $\R^{n+r+1}$ given by removing all but $r-2$ of the vertices
\begin{align} \label{epsilonElements}
&d e_{n+2},\dots, d e_{2n} \\
&\epsilon_1e_1+\dots \epsilon_n e_n + 2e_{2n+\rho'(\epsilon)},~\epsilon\in S'\setminus \set{(0,\dots,0)}
\end{align}
are non-stably rational when $2^{n-1}-1\leq r \leq 2^{n}-2$. These polytopes need not satisfy condition (M), but they will if we choose $\rho$ carefully. The computation of the class group in \cref{conditionMprop} is valid for any $r$ and without conditions on the $r-2$ vertices removed, but for any vertex removed from $\Delta(n)$ the corresponding $f_i$ is removed in the presentation of the class group. Now the conclusion of \cref{conditionMprop} holds if we remove the vertices that do not correspond to the following elements.
\begin{align*}
&f_i,~i=1,\dots, n, \\
&f_j,~j=n+1,\dots, 2n, \\
&f_{2n+\rho'(\epsilon)},~ \epsilon=(0,\dots,0,1,0,\dots, 0) \\
&f_{2^n+n}
\end{align*}
By choosing the bijection carefully, we can achieve this for any $2^{n-1}-2\leq r \leq 2^n-2$ when $n\geq 6$. 
\end{remark}

Using the technique introduced in \cref{exampleHPThigherdim}, we can construct higher dimensional polytopes containing $\Delta(n)$ such that any polytope containing it is not stably rational. This construction can be used to construct polytopes $\Delta(n,r)$ in $\R^{2^n+n-1}\times \R^r$ for $r\leq 2^{n-2}(n-1)$ without increasing the total degree, meaning that $\Delta(n,r)\subset (n+2)\Delta_{2^n+n-1+r}$.

\begin{lemma} \label{sumLemma}
Let $S'$ be the set of $n$-tuples $\epsilon$ with $|\epsilon|\leq n-2$. Then
\begin{equation*}
\sum_{\epsilon\in S'} \floor{\frac{n-|\epsilon|}{2}}=2^{n-2}(n-1).
\end{equation*}
\end{lemma}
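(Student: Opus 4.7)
The plan is to reduce the claim to a standard binomial identity. First, I would group the terms by the value of $|\epsilon|$. Setting $k = |\epsilon|$, the number of tuples $\epsilon \in \{0,1\}^n$ with $|\epsilon| = k$ is $\binom{n}{k}$, so
\begin{equation*}
\sum_{\epsilon \in S'} \floor{\frac{n-|\epsilon|}{2}} = \sum_{k=0}^{n-2} \binom{n}{k}\floor{\frac{n-k}{2}}.
\end{equation*}
Substituting $j = n - k$ and using $\binom{n}{n-j} = \binom{n}{j}$, this becomes $\sum_{j=2}^{n}\binom{n}{j}\floor{j/2}$. Since the $j=0$ and $j=1$ terms contribute $0$, we may extend the sum to $j \geq 0$ without change.

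Next, I would use $\floor{j/2} = (j - (j \bmod 2))/2$ to split the sum as
\begin{equation*}
\sum_{j=0}^{n} \binom{n}{j}\floor{\frac{j}{2}} = \frac{1}{2}\sum_{j=0}^{n} j\binom{n}{j} - \frac{1}{2}\sum_{\substack{j=0 \\ j \text{ odd}}}^{n} \binom{n}{j}.
\end{equation*}
The first sum equals $n \cdot 2^{n-1}$, and the second equals $2^{n-1}$ (the sum of binomial coefficients over odd indices). Therefore the total is $\tfrac{1}{2}(n \cdot 2^{n-1} - 2^{n-1}) = 2^{n-2}(n-1)$, as claimed.

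There is essentially no obstacle here: the identities $\sum_j j\binom{n}{j} = n\,2^{n-1}$ and $\sum_{j \text{ odd}}\binom{n}{j} = 2^{n-1}$ are standard, and the only care needed is the reindexing and noting that the constraint $|\epsilon| \leq n-2$ removes precisely the terms $j=0,1$, which contribute $0$ to the extended sum anyway.
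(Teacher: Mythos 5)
Your proof is correct, and it takes a genuinely cleaner route than the paper. You reindex with $j = n - k$ and use the symmetry $\binom{n}{k} = \binom{n}{n-k}$, which moves the floor to $\floor{j/2}$. The payoff is that the two terms dropped by the constraint $|\epsilon| \le n-2$ (namely $j=0,1$) contribute nothing to $\sum_j \binom{n}{j}\floor{j/2}$, so you may sum over all $j$ from $0$ to $n$ and apply the two standard identities $\sum_j j\binom{n}{j} = n\,2^{n-1}$ and $\sum_{j\text{ odd}} \binom{n}{j} = 2^{n-1}$ with no further bookkeeping. The paper instead keeps the $k$-index, writes $\floor{(n-k)/2} = \tfrac{n-k}{2} - \tfrac{1}{2}\,[n-k\ \text{odd}]$, and then has to handle the truncated range $0 \le k \le n-2$ directly; because the parity of $n-k$ depends on the parity of $n$, this forces a case split into $n$ even and $n$ odd, with separate tallies of which binomial coefficients are omitted in each case. (In fact the bookkeeping in the paper's odd case is garbled as written — the intermediate equalities do not hold term by term, though the final identity is correct and recoverable by the same method.) Your reindexing removes the case split entirely and is the more robust argument.
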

\begin{proof}
For each $k\leq n-2$ there are $\binom{n}{k}$ tuples of length $k$ in $S'$. This means
\begin{align*}
    \sum_{\epsilon\in S'} \floor{\frac{n-|\epsilon|}{2}}& = \sum_{k=0}^{n-2} \binom{n}{k}\floor{\frac{n-k}{2}}
    = \sum_{k=0}^{n-2} \binom{n}{k} \frac{n-k}{2} - \frac{1}{2}\sum_{\substack{0\leq k \leq n-2 \\ n-k \text{ odd}}} \binom{n}{k}.
\end{align*}
Recall that
\begin{align*}
\sum_{k=0}^{n} \binom{n}{k} = 2^n, \quad \sum_{k=0}^n k\binom{n}{k} = n2^{n-1}.
\end{align*}
This means that
\begin{align*}
\sum_{k=0}^{n-2} \binom{n}{k}\frac{n-k}{2} = n(2^{n-2}-1/2).
\end{align*}
When $n$ is even, we have
\begin{align*}
\frac{1}{2}\sum_{\substack{0\leq k \leq n-2 \\ n-k \text{ odd}}} \binom{n}{k} &= \frac{1}{2}\sum_{k=1}^{(n-2)/2} \binom{n}{2k-1} =\frac{1}{2} (2^{n-1} - n)
\end{align*}
and when $n$ is odd, we have
\begin{align*}
\frac{1}{2}\sum_{\substack{0\leq k \leq n-2 \\ n-k \text{ odd}}} \binom{n}{k} &= \frac{1}{2}\sum_{k=0}^{(n-1)/2} \binom{n}{2k} = \frac{1}{2} \sum_{k=0}^{n-1} \binom{n}{k} 
= \frac{1}{2}(2^{n-1}-1)
\end{align*}
The claim follows.
\end{proof}

\begin{proof}[Proof of \cref{hypersurfacesTheorem}]
Fix $n\geq 2$ and consider the polytope $\Delta(n)$; this is contained in the Newton polytope of a general degree $n+2$ hypersurface in $\P^{N+1}$ where $N=2^n+n-2$. We will construct new polytopes $\Delta(n,r)\subset \R^{2^n+n-1}\times \R^r$ for any $r\leq 2^{n-2}(n-1)$. We first explain the construction for $r=1$, from which a generalization is easy. Throughout the argument, we refer to the degree of a vertex, meaning the sum of its components. If a polytope has vertices with only nonnegative components with degree at most $d$, then it is contained in the Newton polytope of a general hypersurface of degree $d$.

Pick any $\epsilon\in S'$ and consider the pairs of vectors
\begin{equation*}
e_{2n+\rho'(\epsilon)}- e_{2^n+n},\quad e_{2^n+n}.
\end{equation*}
Let $\Delta(n,1)$ be the convex hull of $\Delta(n)$ and these two vectors (notice the increase in dimension, $e_{2^n+n}$ a new basis vector). By \cref{higherDimHPTProp}, any polytope that contains this is not stably rational since $\Delta(n)$ satisfies condition (M). After a unimodular change of coordinates taking  $e_{2n+\rho'(\epsilon)}$ to $e_{2n+\rho'(\epsilon)}+e_{2^n+n}$ we get a polytope where all vertices have the same degree except for the new vertex (which have degree 1), and the vertex that contains $e_{2n+\rho'(\epsilon)}$ non-trivially, this is now
\begin{equation*}
\epsilon_1e_1+ \dots + \epsilon_n e_n + 2e_{2n+\rho'(\epsilon)}+ 2e_{2^n+n}
\end{equation*}
so has increased to degree $|\epsilon|+4$. By construction $|\epsilon|\leq n-2$ so $|\epsilon|+4\leq d$. This means that $\Delta(n,1)\subset (n+2)\Delta_{2^n+n}$ and they are of the same dimension, hence $(n+2)\Delta_{2^n+n}$ is a non-stably rational polytope. We inductively carry out the construction from $\Delta(n,r-1)$ to $\Delta(n,r)$ in the following way: Pick a vertex that has degree $\leq n-2$ that contains $e_{2n+\rho'(\epsilon)}$ non-trivially for some $\epsilon\in S'$. Increase dimension by adding elements $e_{2n+\rho'(\epsilon)}-e_{2^n+n-1+r}$ and $e_{2^n+n-1+r}$ in the same way as before. After a unimodular change of coordinates, we increase the degree of the vertex in question by 2, so the total degree of the polytope is unchanged. It is contained in the Newton polytope of a general degree $n+2$ hypersurface in $\P^{2^n+n+r}$. We can do this until all vertices have a degree at least $n-1$. Explicitly, this means that we can carry out the same construction $\floor{\frac{n-|\epsilon|}{2}}$ times for each $\epsilon\in S'$, increasing $r$ by one each time. This means that
\begin{equation*}
r\leq \sum_{\epsilon\in S'} \floor{\frac{n-|\epsilon|}{2}} = 2^{n-2}(n-1)
\end{equation*}
by \cref{sumLemma} as claimed.
\end{proof}

Similarly to \cite[Cor. 1.4]{Sch19a} the polytopes we construct for $n=3$ are contained in Newton polytopes of quintic hypersurfaces containing a 3-plane. This improves Corollary 1.4 in \cite{Sch19a} to $N=13$.

\begin{corollary}
Let $X\subset \P^{N+1}$ be a very general quintic hypersurface containing a 3-plane for $10\leq N \leq 13$. Then $X$ is smooth, unirational, and not stably rational.
\end{corollary}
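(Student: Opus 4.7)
The plan is to mimic the strategy of \cite[Cor. 1.4]{Sch19a}, extending it from $N \leq 9$ to $N \leq 13$ by feeding the polytopes $\Delta(3, r)$ constructed in the proof of \cref{hypersurfacesTheorem}, for $1 \leq r \leq 4$, into the linear system of quintics through a fixed 3-plane. Non-stable rationality will come from the motivic obstruction, smoothness from Bertini, and unirationality from a classical projection argument.

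First I would describe the Newton polytope $P_\Lambda$ of the linear system of quintics through the coordinate 3-plane $\Lambda = V(x_4, \dots, x_{N+1}) \subset \P^{N+1}$ in the affine chart $x_4 \neq 0$: it equals $5 \Delta_{N+1}$ with the four lattice points $5 e_1, 5 e_2, 5 e_3, 5 e_4$ removed (the forbidden monomials $x_0^5, \dots, x_3^5$), while every lattice point of $5 \Delta_{N+1}$ of degree strictly less than $5$ lies in $P_\Lambda$ thanks to the implicit $x_4$-factor of the homogenization. The key step is then to realize $\Delta(3, r)$, after a unimodular affine transformation, as a subpolytope of $P_\Lambda \subset \R^{N+1}$ with $N = 9 + r$. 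The six ``big'' vertices $5 e_1, \dots, 5 e_6$ of $\Delta(3)$ can be moved into the allowed region by the cyclic permutation $\sigma$ of $\set{1, \dots, 10}$ sending $i \mapsto i + 4 \pmod{10}$; the remaining vertices of $\Delta(3)$ have degree $\leq 3$ and thus lie in $P_\Lambda$ automatically. The inductive double-cone construction of \cref{exampleHPThigherdim} used to build $\Delta(3, r)$ from $\Delta(3)$ then adjoins, at each step, two vertices of degree $1$ in a fresh coordinate direction and raises the degree of a single existing vertex by $2$, keeping every vertex inside the corresponding $P_\Lambda$ in $\P^{N+1}$. Since $\Delta(3)$ satisfies condition $(M)$ by \cref{conditionMprop}, \cref{higherDimHPTProp} then yields non-stable rationality of $P_\Lambda$, and \cref{fieldReductionProp} transports it to a very general member of the linear system.

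Smoothness follows from Bertini away from the base locus $\Lambda$; smoothness along $\Lambda$ results from a standard dimension count, since the partial derivatives of a very general $F$ span the conormal bundle of $\Lambda$ at every point of $\Lambda$. For unirationality, blowing up $\Lambda$ produces a morphism $\widetilde{X} \to \P^{N-3}$ whose fiber over a general $q$ is the intersection of $X$ with the 4-plane $\Lambda_q \supset \Lambda$, which decomposes as $\Lambda \cup Q_q$ with $Q_q$ a residual quartic 3-fold, and a standard construction (cf.\ \cite[Section 1]{Sch19a}) produces a unirational parametrization. The main obstacle is the explicit bookkeeping of the unimodular transformation for each $r$; the uniformity of the inductive construction of $\Delta(3, r)$ makes this routine rather than conceptual.
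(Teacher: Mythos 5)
Your approach is the same as the paper's (embed $\Delta(3,r)$ into the Newton polytope of quintics through a 3-plane, then invoke \cref{conditionMprop} and \cref{higherDimHPTProp}); where the paper just asserts the containment is ``easily seen'' and cites \cite[Cor.~8.2]{Sch19a} for smoothness and unirationality, you supply the explicit chart and transformation, plus direct Bertini and residual-quartic arguments. That is a fine alternative presentation. However, there is a concrete error in your description of the target polytope: $P_\Lambda$ is \emph{not} $5\Delta_{N+1}$ with only the four lattice points $5e_1,\dots,5e_4$ removed. The quintics vanishing on $\Lambda = V(x_4,\dots,x_{N+1})$ must omit \emph{every} quintic monomial in $x_0,x_1,x_2,x_3$; in the chart $x_4\neq 0$ these are the $\binom{8}{3}=56$ lattice points of the face $\{a_5=\dots=a_{N+1}=0,\ \sum a_i = 5\}\cong 5\Delta_3$, not merely its four vertices. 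Your correct observation that any lattice point of degree strictly less than $5$ acquires an $x_4$-factor and hence lies in $P_\Lambda$ takes care of all the low-degree vertices, but for degree-$5$ lattice points you must verify that their support is not contained in $\{1,2,3,4\}$, not just that they avoid the four corners.

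Your cyclic permutation $\sigma: i\mapsto i+4\pmod{10}$ does in fact achieve this, but not for the reason you gave: after applying $\sigma$, the only degree-$5$ vertices of $\sigma(\Delta(3))$ are $5e_5,\dots,5e_{10}$, whose supports $\{5\},\dots,\{10\}$ are disjoint from $\{1,2,3,4\}$, and one checks that the edge $[0,2e_1]$ is the entire intersection of $\sigma(\Delta(3))$ with $\{a_5=\dots=a_{10}=0\}$ so no hidden lattice point lands in the forbidden face. The same check applies to the double-cone steps for $\Delta(3,r)$, $1\leq r\leq 4$: the new degree-$1$ vertices are harmless, and the modified vertex (e.g.\ $\sigma$-image of $e_1 + 2e_8 + 2e_{12}$) has degree $5$ with support meeting $\{5,\dots,N+1\}$. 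So the conclusion is correct, and the rest of the argument (condition (M) for $\Delta(3)$ from \cref{conditionMprop}, then \cref{higherDimHPTProp}, then Bertini and the residual-quartic unirationality construction) is sound; you should only repair the characterization of $P_\Lambda$ and adjust the justification of the degree-$5$ cases accordingly.
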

\begin{proof}
The polytope of a very general quintic containing a 3-plane is as follows. Pick three vertices of $5\Delta_{N+1}$ and let $L$ be the linear space spanned by these. Suppose for simplicity that we choose $5e_1,5e_2,5e_3$. The polytope obtained as the convex hull of the lattice points in $5\Delta_{N+1}\setminus L$ is the Newton polytope of a very general quintic hypersurface of $\P^{N+1}$ containing the 3-dimensional linear space $x_0=x_4=\dots = x_{N+1}=0$. These polytopes are smooth by \cite[Cor. 8.2]{Sch19a}. 

The polytopes $\Delta(3,r)$ are easily seen to be contained in one of these for a particular choice of $L$ (possibly after a unimodular transformation), so the claim now follows for the same reason as \cite[Cor. 8.2]{Sch19a}.
\end{proof}

\subsection{Double covers}
Again using the explicit equations in \cite{Sch19a} for non-stably rational double covers of projective space, we can push the bounds further using the same ideas as above. Generalizing the previous section to double covers is much easier from the combinatorial viewpoint. The polytopes $\Delta(n)$ constructed in the last paragraph are contained in the polytope associated with a double cover of $\P^N$ ramified along a very general hypersurface of degree $2\ceil{\frac{n}{2}}+2$, so we can use these to improve the bounds for double covers more or less directly.

\begin{theorem} \label{doubleCoverThm}
Let $N\geq 3$ and write $N=n+r$ for $2^{n-1}-2\leq r\leq 2^{n}-2 + 2^{n-2}(n-1)-\floor{\frac{n}{2}}$. Then a double cover $X\to \P^N$ branched along a very general hypersurface of even degree $d\geq 2\ceil{\frac{n}{2}}+2$ is not stably rational.
\end{theorem}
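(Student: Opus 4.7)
The proof is a direct adaptation of the proof of \cref{hypersurfacesTheorem} to the double cover setting. Up to a unimodular change of coordinates, the Newton polytope of a very general double cover of $\P^{N}$ ramified along a degree $d_{\mathrm{dc}} = 2\lceil n/2 \rceil + 2$ hypersurface is the pyramid
\[
D_{N} = \Conv\bigl(\{2 e_{\ast}\} \cup d_{\mathrm{dc}} \Delta_{N}\bigr) \subset \R^{N+1},
\]
with apex $2e_\ast$ and base simplex $d_{\mathrm{dc}} \Delta_{N}$ in the hyperplane $\{e_\ast = 0\}$. Since the Newton polytope of any degree $d \geq d_{\mathrm{dc}}$ double cover contains $D_N$, it suffices to prove non-stable rationality for the minimal degree $d_{\mathrm{dc}}$. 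The plan is, for each $r$ in the stated range, to embed a non-stably rational polytope satisfying condition $(M)$ into $D_N$ of the same dimension and then to apply \cref{higherDimHPTProp}.

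First, choose the bijection $\rho'$ in the definition of $\Delta(n)$ so that $\rho'(0,\dots,0) = 1$, placing the vertex of $\Delta(n)$ associated to $\epsilon = (0,\dots,0)$ at $2e_{2n+1}$. The unimodular swap $e_1 \leftrightarrow e_{2n+1}$ sends this vertex to $2e_1 = 2e_\ast$ and sends every other vertex into $\{e_\ast = 0\}$ with total $L^1$-weight at most $n+2 \leq d_{\mathrm{dc}}$. Hence $\Delta(n) \subset D_{N}$ for $N = 2^n + n - 2$, supplying the base case $r = 2^n - 2$.

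For larger $r$, carry out the iterative double-cone construction of $\Delta(n, r_{\mathrm{iter}})$ from the proof of \cref{hypersurfacesTheorem}, with a single restriction: the apex $2e_\ast$ may not serve as the base vertex of a double-cone step. Such a step would produce the point $2e_\ast + 2e_{\mathrm{new}}$ with apex-weight $2$ and base-weight $2$, violating the pyramid condition $2/2 + 2/d_{\mathrm{dc}} \leq 1$. In the pre-swap labelling this rules out exactly the iterations using $\epsilon = (0,\dots,0)$, which by \cref{sumLemma} account for $\lfloor n/2\rfloor$ of the total; the remaining $2^{n-2}(n-1)-\lfloor n/2\rfloor$ iterations stay within the base-degree budget and produce embedded polytopes $\Delta(n, r_{\mathrm{iter}}) \subset D_{N+r_{\mathrm{iter}}}$ of equal dimension. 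The lower range $2^{n-1}-2 \leq r < 2^n-2$ is treated by the analogous embedding of the reduced polytopes of \cref{lowerRvalueRemark}, with the bijection chosen so that the apex vertex $2e_{2n+\rho'(0)}$ is retained and condition $(M)$ is preserved by a class-group computation parallel to \cref{conditionMprop}. Applying \cref{higherDimHPTProp} in each case yields the theorem; the main delicate point is verifying that condition $(M)$ still holds after pruning vertices in the lower range, while the geometric input is a straightforward translation from the hypersurface setting.
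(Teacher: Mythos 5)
Your proposal takes the same route as the paper: observe that the double-cover Newton polytope $\Delta^{2,d,N}$ is a pyramid with apex $2e_{N+1}$, embed $\Delta(n)$ into it via a unimodular transformation sending the $\epsilon=(0,\dots,0)$ vertex $2e_{2n+\rho'(0)}$ to the apex, and then run the double-cone iteration of \cref{hypersurfacesTheorem} on all remaining $\epsilon$, excising the $\lfloor n/2\rfloor$ iterations that would have used the apex vertex. Your accounting of the apex constraint is correct, and this is exactly how the paper arrives at the budget $2^{n-2}(n-1)-\lfloor n/2\rfloor$.

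There is, however, a gap in your treatment of the lower range $2^{n-1}-2\leq r < 2^n-2$. You assert that ``condition $(M)$ is preserved by a class-group computation parallel to \cref{conditionMprop},'' but \cref{lowerRvalueRemark} only guarantees that a bijection $\rho$ can be chosen so the pruned polytope satisfies condition $(M)$ when $n\geq 6$. For $n\leq 5$ this need not hold, and your argument as written silently covers all $n$. The paper handles this by a separate case analysis: for $n=3,4$ it verifies which $N$ (with $d=6$) are already covered by \cite{Sch19a} (namely $N\leq 9$), extends to $N\leq 12$ using $\Delta(3)$, and checks condition $(M)$ by hand for the pruned $n=4$, $r\geq 8$ polytopes; for the octic range $19\leq N\leq 145$ it uses the $n=6$ construction for $N\geq 36$ and again falls back on \cite{Sch19a} for the remaining values. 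Without supplying these explicit small-$n$ verifications (or an argument that the class-group computation goes through for $n\leq 5$, which it does not in general), the proof is incomplete for the degree-$6$ and degree-$8$ cases in the stated $N$ range.
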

\begin{proof}
First, note that the polytope of such a double cover is the following.
\begin{align*}
\Delta^{2,d,N} = \Conv{(0, de_{1},\dots, de_{N}, 2e_{N+1})}
\end{align*}
By \cref{lowerRvalueRemark}, the polytopes obtained from $2^{n-1}-2\leq r \leq 2^n-2$ are not stably rational, satisfies condition (M), and are contained in $\Delta^{2,d,N}$ for $N=n+r$ when $n\geq 6$. So, in this case, we may fix $r=2^n-2$ and consider the polytope $\Delta(n)$ constructed above.

This is a simplex, and it has the following vertex
\begin{align*}
\epsilon_1e_1+\dots + \epsilon_ne_n + 2e_{2n+\rho'(\epsilon)} = 2e_{2n+\rho'(\epsilon)}
\end{align*}
for $\epsilon=(0,\dots,0)$. Up to a unimodular transformation, the polytope $\Delta(n)$ is contained in $\Delta^{2,n+2,N}$. The polytopes $\Delta(n,s)$ constructed in the proof of \cref{hypersurfacesTheorem} are contained in $\Delta(2,n+2, N+s)$ as long as we leave the vertex $2e_{2n+\rho'(0,\dots,0)}$ untouched. This means that we can carry out the same construction for $s\leq 2^{n-2}(n-1)-\floor{\frac{n}{2}}$, proving the claim.

For cases $n=3,4$, it suffices to show that double covers of $\P^N$ branched along a very general hypersurface of degree $6$ are non-stably rational for $3\leq N \leq 14$. It is proved for $N\leq 9$ in \cite{Sch19a}. Using $\Delta(3)$, the above argument extends this to $N\leq 12$, and taking $n=4$ and $r\geq 8$, one check that there are choices of $\rho$ such that the corresponding polytopes satisfy condition (M), this proves the remaining cases.

It remains to show that double covers of $\P^N$ branched along a very general octic hypersurface is not stably rational for $19\leq N\leq 145$. The case $n=6$ covers $N\geq 36$. The remaining cases are covered in \cite{Sch19a}. 
\end{proof}

\appendix

\section{} \label{appendixA}
The goal of this appendix is to prove that certain subschemes of strictly toroidal $R$-schemes are again strictly toroidal. It is really a statement in the language of logarithmic geometry and will be proven in this context. We recall some notation but refer the reader to \cite{BN20, Kat94, Kat89} for more details using the same notation as we do here. More specifically, we prove

\begin{theorem} \label{logSubSchTheorem}
Let $\X^\dagger$ be a log regular scheme and let $\Y\subset \X$ be a closed subscheme. Denote by $\Y^\dagger$ the log scheme with log structure obtained by pullback from $\X^\dagger$.
Assume that the schematic intersection of $\Y$ with any logarithmic stratum of $\X$ is regular and dimensionally transverse (possibly empty). Then $\Y^\dagger$ is log regular.
\end{theorem}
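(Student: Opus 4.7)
The plan is to verify Kato's pointwise criterion for log regularity from \cite{Kat94}: at every $y \in \Y$ I will check that $\OO_{\Y,y}/I_y$ is regular and that
\[
\dim \OO_{\Y,y} = \dim \OO_{\Y,y}/I_y + \operatorname{rank}\overline{M}_{\Y,y}^{\operatorname{gp}},
\]
where $I_y \subset \OO_{\Y,y}$ is the ideal generated by the image of $M_{\Y,y} \setminus \OO_{\Y,y}^{\times}$ and $\overline{M}_{\Y,y} = M_{\Y,y}/\OO_{\Y,y}^{\times}$ is the characteristic monoid.

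Let $x \in \X$ be the image of $y$. The first step is to translate the data at $y$ back to data at $x$. Because $\OO_{\X,x}\to \OO_{\Y,y}$ is a local homomorphism, any $m\in M_{\X,x}$ whose image lies in $\mathfrak{m}_x$ maps to $\mathfrak{m}_y$, so no non-unit of $M_{\X,x}$ becomes a unit in $\OO_{\Y,y}^{\times}$; combined with the tautological surjection of characteristic monoids coming from the construction of the pullback log structure, this yields an isomorphism $\overline{M}_{\X,x} \xrightarrow{\sim} \overline{M}_{\Y,y}$. Consequently $I_y = I_x\cdot \OO_{\Y,y}$ and $\operatorname{rank}\overline{M}_{\Y,y}^{\operatorname{gp}} = \operatorname{rank}\overline{M}_{\X,x}^{\operatorname{gp}}$.

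Once this identification is in hand, the rest is short. The closed log stratum through $x$ is locally cut out by $I_x$, and the previous step gives $\OO_{\Y,y}/I_y = \OO_{\Y\cap V(I_x),y}$; regularity of $\Y\cap V(I_x)$ at $y$ is thus exactly Kato's first condition, furnished by the hypothesis. For the dimension formula, log regularity of $\X^\dagger$ at $x$ yields $\operatorname{codim}_{\OO_{\X,x}} V(I_x) = \operatorname{rank}\overline{M}_{\X,x}^{\operatorname{gp}}$, and dimensional transversality of $\Y\cap V(I_x)$ in $\Y$ at $y$ says this codimension is preserved upon intersecting with $\Y$. Chaining these equalities with the first step gives Kato's dimension formula for $\Y^\dagger$ at $y$.

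The main technical point I expect to have to justify with care is the identification of characteristic monoids in the first step: this is the only place that genuinely uses the specific construction of the pullback log structure as the associated log structure of $j^{-1}M_{\X}\to \OO_{\Y}$, rather than a purely formal diagram chase. I will also need to comment on the interpretation of ``logarithmic stratum'' in the hypothesis: the stratum relevant to the point $y$ is the one through its image $x$, and since $y$ lies in a unique stratum locally, regularity and dimensional transversality of $\Y$ with respect to the locally closed stratum are equivalent near $y$ to the corresponding properties for its closure $V(I_x)$, which is what Kato's criterion actually requires.
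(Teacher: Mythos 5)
Your proof takes a genuinely different and, for the stated theorem, more economical route than the paper's. The paper proves an intermediate \emph{Proposition} asserting that the pullback log structure on $\Y$ coincides with the divisorial log structure induced by $D\cap\Y$; this is proved by identifying $\M^\sharp$ with the sheaf of effective Cartier divisors supported on $D$ and carefully checking surjectivity and injectivity of the restriction map on such divisors (the transversality hypothesis is used in an essential way in both halves of that argument). The isomorphism $\overline{M}_{\X,x}\simeq\overline{M}_{\Y,y}$ is then derived as a corollary. You bypass all of this: since $\Y^\dagger$ carries the pullback log structure, the inclusion is by definition strict, and the identification of characteristic stalks follows at once from the fact that $\OO_{\X,x}\to\OO_{\Y,y}$ is a surjective local homomorphism, with no geometric input required. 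This is cleaner and also shows directly that $\Y^\dagger$ is fs (saturatedness of a log structure is detected on the characteristic monoid). What the paper's route buys is a concrete description of the log structure on $\Y$ as a divisorial one, which may be useful independently but is not needed for log regularity.

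Two cautions on the second half of your sketch, which is essentially the same computation as the paper's. First, when you say that dimensional transversality ``preserves the codimension,'' you are implicitly passing between codimensions of subschemes and differences of Krull dimensions of local rings; the paper makes this precise by invoking that the log regular $\X$ is Cohen--Macaulay (hence catenary) to get $\dim\OO_{\Y,y}=\dim\OO_{\X,x}-\htt\J_{\Y,y}$ and $\htt\J_{\Y,y}=\codim_\X\Y$, and you should do the same. Second, the paper reduces to $\Y$ irreducible so that $\dim\Y$, $\dim(\Y\cap E(\tau)^\circ)$, etc., are well-defined global quantities in the codimension bookkeeping; either make this reduction explicit or phrase everything stalk-locally so that reducibility of $\Y$ causes no trouble. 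With these points spelled out, chaining your three equalities does give Kato's dimension formula at $y$, as you claim.
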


\begin{remark}
Suppose $\X^\dagger$ is a finite type $R_0$-scheme such that the log structure is nontrivial on the special fiber. If $\X^\dagger$ is log regular, then the normalized base change to $R$ is strictly toroidal. This follows from Kato's criterion for log smoothness \cite[Prop. 3.4]{Kat89} and \cite[Prop 3.6.1]{BN20}.
\end{remark}

\cref{logSubSchTheorem} is much more general than what is needed in this paper, but we include it in full generality for future reference.

\subsection*{Notation}
For a monoid $M$, let $M^\times$ denote the invertible elements of $M$, $M^+=M\setminus M^\times$, and $M^\sharp = M/M^\times$ the characteristic monoid. If $M^\sharp=M$, we say that $M$ is sharp.

For a log scheme, $\X^\dagger$ denote the sheaf of monoids by $\M_\X$. We say that a log structure is integral, saturated, finitely generated, and fine if $\M_x$ has the same property for all $x\in \X$.

A Weil divisor $D$ in $\X$ induces a log structure on $\X$, the divisorial log structure induced by $D$. The sheaf of monoids consists of regular functions on $\X$ invertible on $\X\setminus D$. Note that $\M_\X\subset \OO_\X$ is a subsheaf, in particular, this is always integral. Throughout this appendix, we assume all log schemes to be fine and saturated (fs).

We say that $\X^\dagger$ is \emph{log regular} if for every point $x\in \X$ the local ring $\OO_{\X,x}/\mathcal{I}_{\X,x}$ is regular, where $\mathcal{I}_{\X,x}$ is the image of $\M_{\X,x}\setminus \M_{\X,x}^\times$ in $\OO_{\X,x}$, and
\begin{equation*}
        \dim{\OO_{\X,x}} = \dim{\OO_{\X,x}/\I_{\X,x}} + \dim{\M^\sharp_{\X,x}}.
\end{equation*}
Log regular schemes are normal, Cohen-Macaulay, and the log structure is always divisorial.

For a regular log scheme $\X^\dagger$ there is a fan $F= F(\X^\dagger)$ such that for every $\tau\in F$ there is a locally closed subset $E(\tau)^\circ\subset \X$ such that $E(\tau)^\circ, \tau\in F$ forms a stratification of $\X$ (see \cite[Section 3.2]{BN20} for details).

Suppose that the log structure on $\X^\dagger$ is divisorial induced by a divisor $D$. Let $\locdiv{D}{\X}$ be the sheaf of effective Cartier divisors supported in $D$. A section $s\in \Gamma(U,\M_\X)\subset \OO_{\X}(U)$ is not a zero divisor and defines a Cartier divisor on $U$ with support on $D$, unique up to multiplication with elements of $\OO_{\X}(U)^\times$. This induces an isomorphism of sheaves $M_\X^\sharp \simeq \locdiv{D}{\X}$ \cite[III.1.6.3]{Ogu18}.

\subsection*{Proof of theorem}
For the rest of this section, we let $R_0=\KK$ and $S^\dagger$ the log scheme obtained by endowing $\Spec{R_0}$ with the divisorial log structure induced by the closed point. We let $\X^\dagger$ be a fine saturated log scheme of finite type over $S^\dagger$.

Let $D=\sum_{i\in I}E_i$ denote the divisor inducing the log structure on $\X^\dagger$ where the $E_i$ are integral closed subschemes of $\X$. The open strata $E(\tau)^\circ, \tau\in F(\X^\dagger)$ are the connected components of the subschemes
\begin{equation*}
    \bigcap_{j\in J}E_j\setminus \bigcup_{i\centernot\in J}E_i
\end{equation*}
for $\emptyset \neq J\subset I$.

Let $\Y$ be an irreducible closed subscheme of $\X$. Assume that for every open stratum $E(\tau)^\circ$ of $\X$, the schematic intersections $\Y\cap E(\tau)^\circ$ are regular and dimensionally transverse. Let $\Y^\dagger$ denote the log scheme with the underlying scheme $\Y$ and the divisorial log structure induced by the divisor $\Y\cap D$. We first show that this coincides with the log structure obtained by pulling the log structure on $\X^\dagger$.
\begin{proposition}
    The induced inclusion of log schemes
    \begin{equation*}
    i\colon \Y^\dagger \to \X^\dagger
    \end{equation*}
    is strict. That is, the pullback of the divisorial log structure on $\X$ coincides with the divisorial log structure on $\Y$ induced by the restriction of $D$.
\end{proposition}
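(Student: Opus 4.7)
The plan is to verify strictness stalkwise: I want to show that for each $y \in \Y$, the natural map $i^{-1}\M_\X \to \M_\Y$ is an isomorphism of (fine saturated) log structures, where $\M_\Y$ denotes the divisorial log structure on $\Y$ induced by $\Y \cap D$. Since both are divisorial and their unit parts $\OO_\Y^\times$ match, using the identification $\M^\sharp \simeq \locdiv{D}{-}$ recalled in the excerpt, the question reduces to checking that the induced map on characteristic sheaves $i^{-1}\M_\X^\sharp \to \M_\Y^\sharp$ is an isomorphism at each point $y \in \Y$.

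First I would fix $y \in \Y$ and let $J \subseteq I$ be the set of indices with $i(y) \in E_j$. By log regularity of $\X^\dagger$, the stalk $(i^{-1}\M_\X)_y^\sharp = \M_{\X,i(y)}^\sharp$ is the free commutative monoid on the classes $[E_j]$ for $j \in J$. The pullback map sends $[E_j]$ to the class of the Cartier divisor $E_j \cap \Y$ on $\Y$, so it suffices to show that the $\{[E_j \cap \Y]\}_{j \in J}$ form a free basis of $\M_{\Y,y}^\sharp$. Concretely I need (i) each $E_j \cap \Y$ is, in a neighbourhood of $y$, a reduced irreducible regular Cartier divisor on $\Y$, and (ii) for distinct $j, j' \in J$ the divisors $E_j \cap \Y$ and $E_{j'} \cap \Y$ share no irreducible component through $y$, and together as $j$ ranges over $J$ they exhaust the irreducible components of $\Y \cap D$ through $y$.

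Both (i) and (ii) will be extracted from the hypothesis that $\Y$ meets every logarithmic stratum $E(\tau)^\circ$ regularly and dimensionally transversely. Applied to the deepest stratum $\bigcap_{j \in J} E_j$ through $i(y)$ and to all the intermediate strata obtained by intersecting subsets of $\{E_j\}_{j \in J}$, transversality forces each partial intersection $\bigcap_{j \in J'} E_j \cap \Y$ to have the expected codimension $|J'|$ in $\Y$ and be regular; in particular each $E_j \cap \Y$ is a reduced Cartier divisor of codimension one, and distinct $E_j \cap \Y$ cannot share a codimension-one component near $y$ (else their intersection would fail to have codimension two in $\Y$). Conversely, any component of $\Y \cap D$ passing through $y$ is contained in some $E_j$ with $j \in J$, hence by codimension reasons equals a component of $E_j \cap \Y$. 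This gives the required bijection of generators, and thus the isomorphism $i^{-1}\M_\X^\sharp \xrightarrow{\sim} \M_\Y^\sharp$.

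The main subtlety I expect is point (i): a priori $E_j \cap \Y$ could acquire embedded components or multiplicities, or fail to be irreducible near $y$ even when $\Y$ and $E_j$ are. Hypothesising transversality with \emph{every} stratum, rather than merely with $D$, is precisely the input that rules this out, since it controls the local behaviour of $\Y$ against every monomial equation of the toroidal chart at $i(y)$. Once the stalkwise bijection of characteristic monoids is in place, strictness of $i$ follows formally from the universal property of the pullback log structure and the definition of divisorial log structures as subsheaves of the structure sheaf.
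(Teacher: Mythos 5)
The central flaw in your argument is the assertion that, by log regularity of $\X^\dagger$, the characteristic monoid $\M_{\X,i(y)}^\sharp$ is the \emph{free} commutative monoid on the classes $[E_j]$, $j\in J$. This is false in general. Log regularity guarantees only that $\M_{\X,i(y)}^\sharp$ is a toric (fine, saturated, sharp) monoid, and such monoids need not be free. A standard example is the cone $\Spec k[x,y,z,w]/(xy-zw)$ with its divisorial log structure along the toric boundary: at the vertex the characteristic monoid is the monoid of lattice points of a non-simplicial cone, which is not free. More to the point, in the non-factorial case an individual boundary component $E_j$ need not even be Cartier, so $[E_j]$ need not lie in $\M_{\X,i(y)}^\sharp$ at all, and the monoid cannot be described by a basis of prime divisor classes.

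Once freeness is dropped, your strategy of matching generators no longer suffices. Via the ``order along $E_j$'' map, both $\M_{\X,i(y)}^\sharp$ and $\M_{\Y,y}^\sharp$ embed into $\N^J$ (this part of your argument is fine and agrees with the paper's injectivity step, which proves the $E_j\cap\Y$ stay distinct by a codimension count). Restriction from $\X$ to $\Y$ then realizes $\M_{\X,i(y)}^\sharp$ as a submonoid of $\M_{\Y,y}^\sharp$. What remains, and what you do not prove, is the reverse containment: if $\sum n_j(E_j\cap\Y)$ is Cartier on $\Y$, then $\sum n_j E_j$ is already Cartier on $\X$ near $y$. This is precisely the nontrivial commutative-algebra content of the paper's proof, which takes a local equation $\overline{f}$ for the divisor on $\Y$, lifts the generators $g_i$ of the Weil divisor ideal on $\X$, and uses that $\overline{f}$ is a nonzerodivisor together with normality of $\X$ to extract a single local generator $g_1$ and conclude the divisor on $\X$ is Cartier. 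Without an argument of this kind (or an equivalent one), your proof does not establish surjectivity of $i^{-1}\M_\X^\sharp\to\M_\Y^\sharp$, and so does not establish strictness.
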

\begin{proof}
    Since the logarithmic structure on $\X$ is divisorial, it is integral and by \cite[Cor. 1.2.11]{Ogu18} it suffices to show that we have an isomorphism of sheaves 
    \begin{equation*}
        i^{-1}\M^{\sharp}_\X \simeq \M^{\sharp}_\Y.
    \end{equation*}
    We use the identifications $\M^\sharp_\X\simeq \locdiv{D}{\X}$ and $\M^\sharp_\Y\simeq \locdiv{D\cap \Y}{\Y}$ so it suffices to show that the pullback via the inclusion map induces an isomorphism of sheaves
    \begin{equation*}
        i^{-1}\locdiv{D}{\X} \to \locdiv{D\cap \Y}{\Y}.
    \end{equation*}
    First, note that since $\Y$ meets the irreducible components of $D$ transversely, pulling back effective Cartier divisors supported on $D$ is well-defined. 
    
    We first show that the above map is surjective. Let $y\in \Y\subset \X$ be a point and consider an effective Cartier divisor $F'$ in a neighborhood of $y\in \Y$ supported on $\Y\cap D$. The map $\OO_{\X,y}\to \OO_{\Y,y}$ is surjective, so we denote by $\overline{x}$ the image of $x\in \OO_{\X,y}$ via this map.
    
    Since $\Y$ meets the open strata smoothly and dimensionally transverse, the divisor $\Y\cap D$ consists of components $\Y\cap E_i$ where $D=\sum_{i\in I}E_i$. Thus, we can write $F'=\sum_{j\in J} n_j(\Y\cap E_j)$ for non-negative integers $n_j$. Let $F=\sum_{j\in J}n_jE_j$. It is a divisor in $\X$ and satisfies $\Y\cap F=F'$. We claim that this is Cartier locally around $y$. It suffices to show that $F$ is defined by a principal ideal, generated by a nonzero divisor in the local ring $\OO_{\X,y}$. Let $I=(g_1,\dots,g_r)$ be the ideal in $\OO_{\X,y}$ that defines $F$ and $(\overline{f})$ the ideal in $\OO_{\Y,y}$ that defines $F'$. Since $F'\cap \Y = F$ the image of $I$ generates the ideal $(\overline{f})$. In particular, we can write $\overline{g_i}=\overline{u}_i\overline{f}$ and 
    \begin{equation*}
        \overline{f}=\sum \overline{h}_i\overline{g}_i=\sum \overline{h}_i\overline{u}_i\overline{f}
    \end{equation*}
    for some $\overline{h_i},\overline{u_i}\in \OO_{\Y,y}$. Since $\overline{f}$ is not a zero divisor, we have $1=\sum \overline{u_i}\overline{h_i}$ and so not all $\overline{u_i}$ can be contained in the maximal ideal, consequently some $\overline{u_i}$ must be invertible. We may assume that $\overline{u_1}$ is invertible, so $(\overline{g_1})=(\overline{f})$. Consider the principal ideal $(g_1)$ in $\OO_{\X,y}$ defined by an element $g_1$ that is mapped to $\overline{g_1}$. Since $\X^\dagger$ is log regular, $\X$ is normal, so it defines an effective Cartier divisor that restricts to $F'$ and contains $F$. This means that we can write it on the form $F + E$ where $E$ restricts to $0$ on $\Y$. Since $F$ and $E$ are subschemes, and $F+E$ is an effective Cartier divisor, it follows from \cite[\href{https://stacks.math.columbia.edu/tag/07ZV}{Tag 07ZV}]{stacks-project}
 that $F$ and $E$ are both effective Cartier divisors. Thus, $F$ is an effective Cartier divisor supported on $D$ that restricts to $F'$, and this shows that the pullback map is surjective.

    To see that the map is injective, let $D_1$ and $D_2$ be two effective Cartier divisors in $\X$, locally around $y$, supported in $D$. Suppose that the schematic intersections $\Y\cap D_1$ and $\Y\cap D_2$ are equal. We claim that $D_2=D_1$ as subschemes of $\X$. Suppose, for contradiction, that $D_2\neq D_1$. Write $D=\sum_{i\in I}E_i$ as before, where $E_i$ is a reduced irreducible subscheme of codimension 1. Since $\Y$ meets the open strata transversely, the schematic intersections $\Y\cap E_i$ are regular and of codimension 1 in $\Y$. It suffices to show that if two subschemes $E_i\cap \Y$ and $E_j\cap \Y$ are equal in $\Y$, then $E_i=E_j$ in $\X$. Suppose, therefore, that $D_1$ and $D_2$ are irreducible reduced closed subschemes of codimension 1 supported on $D$ that are not equal, in particular, $\codim_\X(D_1\cap D_2)=2$.
    Since $\Y$ meets the strata transversely, we have 
    \begin{equation*}
       \codim_{\X}(\Y\cap D_1\cap D_2)=\codim_\X(\Y)+2
    \end{equation*}
    and
    \begin{equation*}
        \codim_\X(\Y\cap D_1)=1+\codim_\X(\Y) =\codim_\X(\Y\cap D_2).
    \end{equation*}
    But since $\Y\cap D_1=\Y\cap D_2$ we also have
    \begin{equation*}
        1+\codim_\X(\Y)=\codim_\X(\Y\cap D_1\cap \Y\cap D_2)=\codim_\X(\Y\cap D_2\cap D_1),
    \end{equation*}
    a contradiction.
\end{proof}
\begin{corollary} \label{sharp_cor}
    Under the above assumptions, we have for every point $y\in \Y$ that
    \begin{equation*}
        \dim(\M^\sharp_{\X,y})= \dim(\M^\sharp_{\Y,y})
    \end{equation*}
\end{corollary}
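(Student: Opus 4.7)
The plan is to deduce this corollary directly from the strictness of the inclusion $i \colon \Y^\dagger \hookrightarrow \X^\dagger$ just established in the preceding proposition. That proposition (appealing to \cite[Cor.~1.2.11]{Ogu18}) shows that the natural map $i^{-1}\M^\sharp_\X \to \M^\sharp_\Y$ is an isomorphism of sheaves on $\Y$. Taking stalks at a point $y \in \Y$ and using that the inverse image sheaf along a closed immersion satisfies $(i^{-1}\mathcal{F})_y = \mathcal{F}_{i(y)}$, this yields a canonical isomorphism of sharp fine saturated monoids
\[
\M^\sharp_{\X,y} \;\simeq\; \M^\sharp_{\Y,y}.
\]

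Next, I would observe that the quantity $\dim(P)$ appearing on both sides of the claimed equation depends only on the isomorphism class of the abstract fs sharp monoid $P$: it is the rank of the finitely generated free abelian group $P^{\operatorname{gp}}$, equivalently the Krull dimension of the monoid algebra or the dimension of the rational polyhedral cone spanned by $P$. Isomorphic monoids therefore have equal dimensions, so the equality $\dim(\M^\sharp_{\X,y}) = \dim(\M^\sharp_{\Y,y})$ follows at once.

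There is no serious obstacle beyond the content of the preceding proposition; the only point to verify is that the sheaf isomorphism produced there, phrased in terms of the identifications $\M^\sharp_\X \simeq \locdiv{D}{\X}$ and $\M^\sharp_\Y \simeq \locdiv{D\cap \Y}{\Y}$, passes to stalks as an isomorphism of characteristic monoids. This is immediate since these identifications are themselves isomorphisms of sheaves, so the whole argument amounts to a formal stalk computation once strictness is in hand.
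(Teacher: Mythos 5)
Your proof is correct and is exactly the intended argument: the paper states this as a corollary of the preceding proposition with no further proof, precisely because strictness immediately yields an isomorphism of stalk characteristic monoids $\M^\sharp_{\X,y}\simeq\M^\sharp_{\Y,y}$, and dimension is an isomorphism invariant of a sharp fs monoid (rank of the, necessarily torsion-free, groupification). Nothing to add.
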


\begin{theorem}
  $\Y^\dagger$ is a regular log scheme.
\end{theorem}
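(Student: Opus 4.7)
My plan is to verify the two defining conditions of log regularity at an arbitrary point $y\in\Y$ by reducing both to the geometry of the stratum containing $y$. Let $\tau\in F(\X^\dagger)$ be such that $y\in E(\tau)^\circ$ and let $J_y\subset\OO_{\X,y}$ denote the ideal of $\Y$. The first thing I would do is identify the quotient $\OO_{\Y,y}/\I_{\Y,y}$ geometrically. Since the preceding proposition shows that $i\colon\Y^\dagger\to\X^\dagger$ is strict, the ideal $\I_{\Y,y}$ is precisely the image of $\I_{\X,y}$ under $\OO_{\X,y}\twoheadrightarrow\OO_{\Y,y}$, so
\[
\OO_{\Y,y}/\I_{\Y,y} \;\simeq\; \OO_{\X,y}/(\I_{\X,y}+J_y) \;=\; \OO_{E(\tau)\cap\Y,\,y},
\]
using the standard fact that, in a log regular scheme with divisorial log structure, $\I_{\X,y}$ cuts out the closed stratum $E(\tau)$ at $y$.

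Once this identification is in place, the regularity condition is almost immediate: $E(\tau)^\circ$ is open in $E(\tau)$ and contains $y$, so locally at $y$ one has $E(\tau)\cap\Y = E(\tau)^\circ\cap\Y$ scheme-theoretically, and the latter is regular by the transversality hypothesis. The bulk of the work is then the dimension formula. I would start from log regularity of $\X^\dagger$, namely $\dim\OO_{\X,y}=\dim\OO_{E(\tau),y}+\dim\M^\sharp_{\X,y}$, and from dimensional transversality, rewritten as the codimension identity
\[
\dim\OO_{\X,y}-\dim\OO_{\Y,y}\;=\;\dim\OO_{E(\tau),y}-\dim\OO_{E(\tau)\cap\Y,\,y}.
\]
Subtracting the two and invoking \cref{sharp_cor}, which identifies $\dim\M^\sharp_{\X,y}$ with $\dim\M^\sharp_{\Y,y}$, produces exactly
\[
\dim\OO_{\Y,y}=\dim\OO_{E(\tau)\cap\Y,\,y}+\dim\M^\sharp_{\Y,y},
\]
which is the second log-regularity condition.

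The main obstacle I anticipate is the translation from the geometric hypothesis (regular, dimensionally transverse intersection of $\Y$ with each open stratum) to the Krull-dimensional equalities in the local rings at $y$. Handling this cleanly relies on the fact that log regular schemes are normal and Cohen--Macaulay, hence locally equidimensional, so that codimensions can be computed unambiguously in the local rings and dimensional transversality transfers to the required equality of codimensions. Once this bookkeeping is settled, the rest of the argument is essentially formal manipulation of the identification established in the first step.
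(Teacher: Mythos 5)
Your proposal is correct and follows essentially the same route as the paper: both proofs use strictness to identify $\I_{\Y,y}$ with the image of $\I_{\X,y}$, reduce the regularity claim to the hypothesis that $\Y\cap E(\tau)^\circ$ is regular, and then reduce the dimension formula via \cref{sharp_cor} to the codimension identity $\dim\OO_{\Y,y}-\dim\OO_{\Y,y}/\I_{\Y,y}=\dim\OO_{\X,y}-\dim\OO_{\X,y}/\I_{\X,y}$, which is extracted from dimensional transversality using the Cohen--Macaulay and normality of log regular schemes. The paper's write-up carries out the codimension bookkeeping explicitly by comparing $\codim_{E(\tau)^\circ}(\overline{\{y\}}\cap E(\tau)^\circ)$ with $\codim_{E(\tau)^\circ\cap\Y}(\overline{\{y\}}\cap E(\tau)^\circ\cap\Y)$, but this is the same calculation you indicate.
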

\begin{proof}
First, note that $\Y$ is fs since it is a closed subscheme of $\X$, and the log structure is the pullback of the log structure on $\X$.

For any point $y\in \Y\subset \X$ we have $y\in E(\tau)^\circ$ for some $\tau\in F(\X^\dagger)$. The ideal $\I_{\X,y}$ is the image of the ideal sheaf of $E(\tau)^\circ$ in the local ring at $y$, and in particular ${\OO_{\X,y}/\I_{\X,y}}$ is the local ring of the structure sheaf of $E(\tau)^\circ$ at $y$. 
Since $\Y$ meets the strata of $\X^\dagger$ smoothly and transversely, the logarithmic stratification of $\Y^\dagger$ is given by the intersections $\Y\cap E(\tau)^\circ$. In particular the ring $\OO_{\Y,y}/\I_{\Y,y}$ is the local ring of the structure sheaf of $\Y\cap E(\tau)^\circ$. Since this intersection is regular, this ring is regular for every $y\in \Y$. It remains to show that
\begin{equation*}
    \dim{\OO_{\Y,y}} = \dim{\OO_{\Y,y}/\I_{\Y,y}} + \dim{\M^\sharp_{\Y,y}}
\end{equation*}
By \cref{sharp_cor} it suffices to show
\begin{equation*}
    \dim{\OO_{\Y,y}} - \dim{\OO_{\Y,y}/\I_{\Y,y}}=\dim{\OO_{\X,y}} - \dim{\OO_{\X,y}/\I_{\X,y}}.
\end{equation*}
Note that any log regular scheme is Cohen-Macaulay (and normal) \cite[Theorem 4.2]{Kat94}. Now let $\J_{\Y}$ denote the ideal sheaf of $\Y$ and $\J_{\Y,y}$ the stalk in $\OO_{\X,y}$, since $\Y$ is irreducible, this is a prime ideal. The following properties follow from the fact that $\X$ is Cohen-Macaulay and that $\Y$ meets the open strata $E(\tau)^\circ$ smoothly and transversely. 
\begin{align*}
\dim \OO_{\Y,y} &= \dim \OO_{\X,y}/\J_{\Y,y} = \dim{\OO_{\X,y}} - \htt{\J_{\Y,y}} \\
\htt{\J_{\Y,y}} &= \codim_\X{\Y} \\
\dim{\OO_{\X,y}/\I_{\X,y}} &= \codim_{E(\tau)^\circ} (\overline{\set{y}}\cap {E(\tau)^\circ}) \\
\dim{\OO_{\Y,y}/\I_{\Y,y}} &= \codim_{E(\tau)^\circ \cap \Y} (\overline{\set{y}} \cap E(\tau)^\circ \cap \Y)
\end{align*}
Here $E(\tau)^\circ$ is the unique open stratum that contains the point $y$ and $\overline{\set{y}}$ means the closure of $y$ in $\X$. Since $E(\tau)^\circ$ and $\Y\cap E(\tau)^\circ$ are regular, we can write the last one as
\begin{align*}
    \codim_{E(\tau)^\circ \cap \Y} (\overline{\set{y}}\cap E(\tau)^\circ\cap \Y) & = \dim{(E(\tau)^\circ\cap \Y)} - \dim{(\overline{\set{y}}\cap E(\tau)^\circ)} \\
    &= \dim{(\Y)}+ \dim{(E(\tau)^\circ)}  \\
    &~~~~~ -\dim{(\X)} - \dim{(\overline{\set{y}}\cap E(\tau)^\circ)} \\
    &= \codim_{E(\tau)^\circ}({\overline{\set{y}}\cap E(\tau)^\circ}) - \codim_\X(\Y)
\end{align*}
Where we have used that $\overline{\set{y}}\subset \Y$ so
\begin{equation*}
    \dim{(\overline{\set{y}}\cap E(\tau)^\circ\cap \Y)}= \dim{(\overline{\set{y}}\cap E(\tau)^\circ)}
\end{equation*}
Putting these identities together finishes the proof.
\end{proof}

\bibliographystyle{alpha}
\bibliography{main.bib}

\begin{thebibliography}{DLRS10}

\bibitem[AM72]{AM72}
M.~Artin and D.~Mumford.
\newblock Some elementary examples of unirational varieties which are not rational.
\newblock {\em Proc. London Math. Soc. (3)}, 25:75--95, 1972.

\bibitem[Bat17]{Bat17}
V.~Batyrev.
\newblock The stringy {E}uler number of {C}alabi-{Y}au hypersurfaces in toric varieties and the {M}avlyutov duality.
\newblock {\em Pure Appl. Math. Q.}, 13(1):1--47, 2017.

\bibitem[Bat23]{Bat20}
V.~Batyrev.
\newblock Canonical models of toric hypersurfaces, 2023.

\bibitem[BKS22]{BKS19}
V.~Batyrev, A.~Kasprzyk, and K.~Schaller.
\newblock On the fine interior of three-dimensional canonical {F}ano polytopes.
\newblock In {\em Interactions with lattice polytopes}, volume 386 of {\em Springer Proc. Math. Stat.}, pages 11--47. Springer, Cham, 2022.

\bibitem[BN20]{BN20}
E.~Bultot and J.~Nicaise.
\newblock Computing motivic zeta functions on log smooth models.
\newblock {\em Math. Z.}, 295(1-2):427--462, 2020.

\bibitem[CG72]{CG72}
C.~H. Clemens and P.~A. Griffiths.
\newblock The intermediate {J}acobian of the cubic threefold.
\newblock {\em Ann. of Math. (2)}, 95:281--356, 1972.

\bibitem[CLS11]{CLS11}
D.~A. Cox, J.~B. Little, and H.~K. Schenck.
\newblock {\em Toric varieties}, volume 124 of {\em Graduate Studies in Mathematics}.
\newblock American Mathematical Society, Providence, RI, 2011.

\bibitem[Cox95]{Cox95}
D.~A. Cox.
\newblock The homogeneous coordinate ring of a toric variety.
\newblock {\em J. Algebraic Geom.}, 4(1):17--50, 1995.

\bibitem[CTP16]{CP16}
J.~Colliot-Th\'{e}l\`ene and A.~Pirutka.
\newblock Hypersurfaces quartiques de dimension 3: non-rationalit\'{e} stable.
\newblock {\em Ann. Sci. \'{E}c. Norm. Sup\'{e}r. (4)}, 49(2):371--397, 2016.

\bibitem[DK87]{DK87}
V.~I. Danilov and A.~G. Khovanski.
\newblock Newton polyhedra and an algorithm for computing hodge--deligne numbers.
\newblock {\em Mathematics of the USSR-Izvestiya}, 29(2):279, 1987.

\bibitem[DLRS10]{DRS10}
J.~A. De~Loera, J.~Rambau, and F.~Santos.
\newblock {\em Triangulations}, volume~25 of {\em Algorithms and Computation in Mathematics}.
\newblock Springer-Verlag, Berlin, 2010.
\newblock Structures for algorithms and applications.

\bibitem[Fin83]{Fin83}
J.~Fine.
\newblock {\em Resolution and completion of algebraic varieties}.
\newblock PhD thesis, University of Warwick, 1983.

\bibitem[GS]{M2}
D.R. Grayson and M.E. Stillman.
\newblock Macaulay2, a software system for research in algebraic geometry.
\newblock Available at \url{http://www.math.uiuc.edu/Macaulay2/}.

\bibitem[HPT18]{HPT18}
B.~Hassett, A.~Pirutka, and Y.~Tschinkel.
\newblock Stable rationality of quadric surface bundles over surfaces.
\newblock {\em Acta Math.}, 220(2):341--365, 2018.

\bibitem[IVnS21]{IS21}
\'{O}. Iglesias-Vali\~{n}o and F.~Santos.
\newblock The complete classification of empty lattice 4-simplices.
\newblock {\em Rev. Mat. Iberoam.}, 37(6):2399--2432, 2021.

\bibitem[Kat89]{Kat89}
K.~Kato.
\newblock Logarithmic structures of {F}ontaine-{I}llusie.
\newblock In {\em Algebraic analysis, geometry, and number theory ({B}altimore, {MD}, 1988)}, pages 191--224. Johns Hopkins Univ. Press, Baltimore, MD, 1989.

\bibitem[Kat94]{Kat94}
K.~Kato.
\newblock Toric singularities.
\newblock {\em American Journal of Mathematics}, 116(5):1073--1099, 1994.

\bibitem[Kol95]{Kol95}
J.~Koll\'{a}r.
\newblock Nonrational hypersurfaces.
\newblock {\em J. Amer. Math. Soc.}, 8(1):241--249, 1995.

\bibitem[NO21]{NO20b}
J.~Nicaise and J.C. Ottem.
\newblock A refinement of the motivic volume, and specialization of birational types.
\newblock In {\em Rationality of varieties}, volume 342 of {\em Progr. Math.}, pages 291--322. Birkh\"{a}user/Springer, Cham, 2021.

\bibitem[NO22]{NO20a}
J.~Nicaise and J.C. Ottem.
\newblock Tropical degenerations and stable rationality.
\newblock {\em Duke Math. J.}, 171(15):3023--3075, 2022.

\bibitem[NS19]{NS19}
J.~Nicaise and E.~Shinder.
\newblock The motivic nearby fiber and degeneration of stable rationality.
\newblock {\em Invent. Math.}, 217(2):377--413, 2019.

\bibitem[Ogu18]{Ogu18}
A.~Ogus.
\newblock {\em Lectures on logarithmic algebraic geometry}, volume 178 of {\em Cambridge Studies in Advanced Mathematics}.
\newblock Cambridge University Press, Cambridge, 2018.

\bibitem[Rei87]{Rei87}
M.~Reid.
\newblock Young person's guide to canonical singularities.
\newblock In {\em Algebraic geometry, {B}owdoin, 1985 ({B}runswick, {M}aine, 1985)}, volume~46 of {\em Proc. Sympos. Pure Math.}, pages 345--414. Amer. Math. Soc., Providence, RI, 1987.

\bibitem[Sch19a]{Sch19a}
S.~Schreieder.
\newblock Stably irrational hypersurfaces of small slopes.
\newblock {\em Journal of the American Mathematical Society}, 32(4):1171--1199, 2019.

\bibitem[Sch19b]{Sch19b}
S.~Schreieder.
\newblock Variation of stable birational types in positive characteristic.
\newblock {\em \'{E}pijournal G\'{e}om. Alg\'{e}brique}, 3:Art. 20, 14, 2019.

\bibitem[Shi22]{SV19}
E.~Shinder.
\newblock Variation of stable birational types of hypersurfaces.
\newblock In {\em Recent developments in algebraic geometry---to {M}iles {R}eid for his 70th birthday}, volume 478 of {\em London Math. Soc. Lecture Note Ser.}, pages 296--313. Cambridge Univ. Press, Cambridge, 2022.
\newblock With an appendix by Claire Voisin.

\bibitem[{Sta}18]{stacks-project}
The {Stacks Project Authors}.
\newblock \textit{Stacks Project}.
\newblock \url{https://stacks.math.columbia.edu}, 2018.

\bibitem[Tot16]{Tot16}
B.~Totaro.
\newblock Hypersurfaces that are not stably rational.
\newblock {\em Journal of the American Mathematical Society}, 29(3):883--891, 2016.

\bibitem[Voi15]{Voi15}
C.~Voisin.
\newblock Unirational threefolds with no universal codimension {$2$} cycle.
\newblock {\em Invent. Math.}, 201(1):207--237, 2015.

\bibitem[Whi64]{Wh64}
G.~K. White.
\newblock Lattice tetrahedra.
\newblock {\em Canadian J. Math.}, 16:389--396, 1964.

\end{thebibliography}

\end{document}